\numberwithin{figure}{section}
\numberwithin{figure}{section}
\newtheorem{theorem}{Theorem}[section]
\newtheorem{lemma}[theorem]{Lemma}
\newtheorem{proposition}[theorem]{Proposition}
\theoremstyle{definition}
\newtheorem{definition}[theorem]{Definition}
\newtheorem{remark}[theorem]{Remark}
\numberwithin{equation}{section}
\newcommand{\dloc}{a_{\rm stable}}
\newcommand{\wto}{\rightharpoonup}
\newcommand{\de}{\delta}
\newcommand{\R}{\mathbb{R}}
\newcommand{\N}{\mathbb{N}}
\newcommand{\Z}{\mathbb{Z}}
\newcommand{\Ha}{\mathcal{H}}
\newcommand{\beq}{\begin{equation}}
\newcommand{\eeq}{\end{equation}}
\newcommand{\eps}{\varepsilon}
\newcommand{\e}{\varepsilon}
\newcommand{\vphi}{\varphi}
\newcommand{\diver}{\operatorname{div}}
\newcommand{\Div}{\operatorname{div}}
\newcommand{\pa}{\partial}
\newcommand{\Om}{\Omega}
\newcommand{\medint}{-\kern -,375cm\int}
\newcommand{\medintinrigo}{-\kern -,315cm\int}
\newcommand{\C}{\mathbb{C}}
\newcommand{\Sb}{\mathbb{S}}
\renewcommand{\H}{\mathcal{H}}
\begin{document}

\title[Surface diffusion with elasticity in 2D]{The surface diffusion flow with elasticity in the plane}

\author{Nicola Fusco}

\author{Vesa Julin}

\author{Massimiliano Morini}

\keywords{}

\begin{abstract} 
In this paper we prove  short-time existence of a smooth solution in the plane to the surface diffusion equation with an elastic term and without an additional curvature regularization. We also prove the asymptotic stability 
of strictly stable stationary sets. 
\end{abstract}

\maketitle

\tableofcontents
\section{Introduction}

In the last years, the physical
literature has shown a rapidly growing interest toward the study of the morphological instabilities of interfaces between elastic phases generated by the competition between elastic and surface energies, the so called stress driven rearrangement instabilities (SDRI). They occur, for instance, in hetero-epitaxial growth of elastic films when a lattice mismatch between film and substrate is present, or in certain alloys that, under specific
 temperature conditions, undergo a phase separation in many small connected phases
 (that we call particles) within a common elastic body. A third interesting situation is
  represented by the formation and evolution of material voids inside a stressed elastic
   solid. Mathematically, the common thread is that equilibria are identified with local or global minimizers under a volume constraint of a free energy functional, which is given
 by the sum of the stored elastic energy and the surface energy (of isotropic or
 anisotropic perimeter type) accounting for the surface tension along the unknown profile
  of the film or the interface between the phases. The associated variational problems
 can be seen as non-local instances of the {\em isoperimetric principle}, 
 where the non-locality is given by the elastic term. They are very well studied in the
 physical and numerical literature, but the available rigorous mathematical results
 are very few.  We refer to \cite{BGZ15, Bo0,BC, FFLM, FM09, GZ14} for some existence, regularity and stability results related to a  variational model  describing the equilibrium configurations of two-dimensional epitaxially strained elastic films, and to 
\cite{Bo,  CS07} for results in three-dimensions.
 We also mention that a  hierarchy of variational principles to describe the equilibrium shapes in the aforementioned contexts has been introduced in \cite{GuVo}. The simplest prototypical example is perhaps given by the following problem, which can be used to describe the equilibrium shapes of voids in elastically stressed solids (see for instance \cite{siegel-miksis-voorhees04}):
\beq\label{prot1}
\text{minimize }\, J(F):=\frac12\int_{\Om\setminus F}\C E(u_F):E(u_F)\, dz+\int_{\pa F}\vphi(\nu_F)\, d\sigma
\eeq
where minimization takes place among all sets $F\subset\Om$ with  prescribed measure $|F|=m$. Here, the set $F$ represents the shape of the void that has formed within
the  elastic body $\Om$ (an open subset of $\R^2$ or $\R^3$), $u_F$ stands for the equilibrium elastic   displacement in $\Om\setminus F$ subject to a prescribed boundary conditions $u_F=w_0$ on $\pa \Om$ (see \eqref{uF} below), $\C$ is the elasticity tensor 
of the (linearly) elastic material, $E(u_F):=(\nabla u_F+\nabla^T u_F)/2$ denotes the elastic strain of $u_F$, and $\vphi(\nu_F)$ is the anisotropic surface energy density  evaluated at the outer unit normal $\nu_F$ to $F$. The presence of a nontrivial Dirichlet boundary condition $u_F=w_0$ on $\pa \Om$ is what causes the solid $\Om\setminus F$ to be elastically stressed. Indeed, note that when $w_0=0$
the elastic term becomes irrelevant and  \eqref{prot1} reduces to the classical 
Wulff shape problem (with the confinement constraint  $F\subseteq\Om$).  
We refer to \cite{ CJP, FFLMi} for related existence, regularity and stability results in two dimensions.  See also \cite{BCS} for a relaxation result  valid in all dimensions for a variant of \eqref{prot1}.

In this paper we address   the  evolutive counterpart of \eqref{prot1} in two-dimensions, namely  the morphologic evolution  of shapes towards equilbria of the functional $J$, driven by stress and surface diffusion. 
 Assuming that mass transport in the bulk occurs at a much faster time scale,  see \cite{Mu63},  we have, according to the Einstein–Nernst relation, that the evolution is governed by the {\em area preserving} evolution law 
 \beq\label{i1}
V_t=\pa_{\sigma\sigma}\mu_t \qquad\text{on $\pa F(t)$} 
\eeq
where $V_t$ denotes the (outer) normal velocity of the evolving curve $\pa F(t)$ at time $t$ and $\pa_{\sigma\sigma} \mu_t$  stands for the tangential laplacian of the chemical potential $\mu_t$ on $\pa F(t)$. In turn, $\mu_t$ is given by the {\em first variation} of the free-energy functional $J$ at $F(t)$,  and thus (see \eqref{eq:J'} below) \eqref{i1} takes the form 
  \beq\label{i2}
  V_t=\pa_{\sigma\sigma}\bigl(k_{\vphi,t}-Q(E(u_{F(t)}))\bigr)\,, 
  \eeq
where $k_{\vphi,t}$ is the anisotropic curvature of $\pa F(t)$, $u_{F(t)}$ denotes as before the elastic equilibrium in $\Om\setminus F(t)$ subject to
 $u_{F(t)}=w_0$  on $\pa \Om$, and $Q$ is the quadratic form defined as $Q(A):=\frac12\C A: A$ for all $2\times2$-symmetric matrices $A$.
 Note that when $w_0=0$ the elastic term vanishes and thus \eqref{i1} reduces to the {\em surface diffusion  flow } equation
 \beq\label{sdintro}
 V_t=\pa_{\sigma\sigma}k_{\vphi,t} 
 \eeq
  for evolving curves, studied in \cite{EllGarcke} in the isotropic case (see also \cite{EMS} for the $N$-dimensional case).  Thus, we may also regard \eqref{i2} as a sort of  prototypical  nonlocal perturbation   of \eqref{sdintro} by an additive elastic contribution.

 As observed by Cahn and Taylor in the case without elasticity  (see \cite{cahn-taylor94}), the evolution equation \eqref{i2} can be seen as  the gradient flow of the energy functional $J$ with respect to a suitable Riemannian structure of $H^{-1}$ type, see Remark~\ref{rm:accameno1}.

 When the anisotropy $\vphi$ is {\em strongly elliptic}, that is when it satisfies
\beq\label{strongEll}
D^2\vphi(\nu) \,\tau\cdot \tau>0\qquad\text{for all  $\nu\in \mathbb{S}^{1}$ and all $\tau\perp\nu$, $\tau\neq 0$}
\eeq
the evolution \eqref{i2} yields a {\em parabolic } 4-th order (geometric) equation, 
time by time coupled with the {\em elliptic system} describing the elastic equilibrium in $\Om\setminus F(t)$.     

However, we mention here that  for some  physically relevant   anisotropies  the ellipticity condition
\eqref{strongEll}  may fail at some directions $\nu$, see for instance \cite{dicarlo-gurtin-guidugli92, siegel-miksis-voorhees04}. Whenever this happens, \eqref{i2}  becomes {\em backward parabolic} and thus ill-posed.
To overcome this ill-posedness, a canonical approach inspired by  Herring's work   \cite{herring51} consists in considering a {\em regularized curvature-dependent}  surface energy of the form 
$$
\int_{\pa F}\Bigl(\vphi(\nu_F)+\frac\e2 k^2\Bigr)\, d\Ha^1,
$$
where $\e>0$ and $k$ denotes the standard curvature, see \cite{dicarlo-gurtin-guidugli92, GurJab}.
 In this case \eqref{i1} yields the following $6$-th order   area preserving evolution equation
\begin{equation}
V_t=\pa_{\sigma\sigma}\Bigl(k_{\vphi,t}-Q(E(u_{F(t)}))-\e\bigl(\pa_{\sigma\sigma}k+\frac12k^3\bigr)\Bigr)\,.
 \label{sixth order evolution equation}
\end{equation}
This equation was studied numerically in \cite{siegel-miksis-voorhees04} (see also \cite{RRV, BHSV} and references therein)  and analytically in 
\cite{FFLM2}, where local-in-time existence of a solution was established in the context of 
 periodic graphs, modelling the evolution of epitaxially  strained elastic films. We refer also to 
 \cite{FFLM3} for corresponding results in three-dimensions.  We remark that 
 the analysis of \cite{FFLM2} (and of \cite{FFLM3}), which is based on the so-called minimizing movements approach, relies heavily on the presence of the curvature regularization and, in fact, all the estimates provided there are $\e$-dependent and degenerate as $\e\to 0^+$, even when $\vphi$ is strongly elliptic. Thus, the methods developed in \cite{FFLM2, FFLM3} do not apply to the case $\e=0$ in \eqref{sixth order evolution equation}.
 
In this paper we are able to address the case $\e=0$ and in one of the main results (see Theorems~\ref{th:existence}~and~\ref{Cinfty} below) we prove short time existence and uniqueness of a smooth solution of \eqref{i2} starting from sufficiently regular initial sets. To the best of our knowledge this is {\em the first existence result for the surface diffusion flow  with elasticity {and without curvature regularization}}. 
Note that in general one cannot expect global-in-time existence. Indeed, even when no elasticity is present and $\vphi$ is isotropic, singularities such as pinching may develop in finite time, see for instance \cite{GigaIto}.

In the second main result of the paper we establish  global-in-time existence and study the long-time behavior  for a  class of initial data:   we show that {\em strictly stable stationary sets}, that is, sets  $E$ that are stationary  for the energy functional $J$ and  with positive second variation $\pa^2J(E)$ are  {\em exponentially stable}  for the flow \eqref{i2}. More precisely, if the initial set $F_0$ is sufficiently close to the strictly stable set $E$ and has the same area, then the flow \eqref{i2} starting from $F_0$ exists for all times and converges  to $E$ exponentially fast as $t\to+\infty$ (see Theorem~\ref{thmstability} for the precise statement).

A few comments on the strategy of the proofs are in order. The main technical difficulties in proving short-time existence clearly originate  from the presence of the nonlocal elastic term $Q(E(u_{F(t)}))$ in \eqref{i2}. When a curvature regularization as in \eqref{sixth order evolution equation} is present, the elastic term may be regarded and treated as a lower order perturbation and thus is more easily handled. When $\e=0$ this is no longer possible and so one has to find a way to show that the parabolicity of the geometric part of the equation still tames the elastic contribution. Our strategy is based on   
the natural idea of thinking of $Q$ as a {\em forcing term} in order to  set up a fixed point argument.
Roughly speaking, given an initial set $F_0$ and  a forcing term $f$, we let  $t\mapsto F(t)$ be the flow starting from $F_0$ and solving
$$
V_t=\pa_{\sigma\sigma}\bigl(k_{\vphi,t}-f\bigr),
$$
and we consider  the correspondig $t\mapsto Q(E(u_{F(t)}))$, with $u_{F(t)}$ being as usual the elastic equilibrium in $\Om\setminus F(t)$. The existence proof then amounts to finding a fixed point for the map $f\mapsto Q(E(u_{F(\cdot)}))$. 
In order to implement this strategy, the crucial idea is to look at   the squared $L^2$-norm of the tangential gradient of the chemical potential $(k_{\vphi,t}-f)$, that is, to study the behavior of the quantity
\beq\label{monotoneQ}
\int_{\pa F(t)}\bigl(\pa_{\sigma}(k_{\vphi,t}-f)\bigr)^2\, d\H^1
\eeq
with respect to time. More precisely, by computing the time derivative of \eqref{monotoneQ} we  derive suitable energy inequalities involving \eqref{monotoneQ} (see  Lemma~\ref{monotonisuus lemma})  yielding the a priori regularity estimates needed to carry out the  fixed point argument. 
The quantity \eqref{monotoneQ}, with $f$ now given  by the elastic term $Q$,   is also crucial in the aforementioned asymptotic stability analysis. Here, by  adapting to the present situation the methods developed in \cite{AFJM} for the  surface diffusion flow without elasticity, we are able to show that for properly chosen initial sets,  \eqref{monotoneQ} becomes monotone decreasing in time and, in fact, exponentially decays to zero, thus giving the desired exponential convergence result.  
 
This paper is organized as follows.  
 In Section~\ref{sec:preliminaries}  we set up  the problem, introduce the main notations and collect several auxiliary results concerning the energy functional $J$ in \eqref{prot1}. Some of these results, which deal with the properties of strictly stable stationary sets,  are then crucial for the asymptotic stability analysis carried out in Section~\ref{sec:stability}. 
 The short-time existence, uniqueness and regularity of the flow \eqref{i2} for sufficiently regular initial data is addressed in Section~\ref{sec:existence}.
 In Section~\ref{sec:graphs} we briefly illustrate how to apply our main existence and asymptotic stability results in the case of evolving periodic graphs, that is in the geometric setting considered in \cite{FFLM2}. In particular, in Theorem~\ref{th:2dliapunov} we address and analytically characterize the exponential asymptotic stability of  {\em flat configurations}, thus extending to the evolutionary setting the results of \cite{FM09, Bo0}.
 In the final Appendix, for the reader's convenience we provide the proof of an  interpolation result, probably known to the experts,  that is used throughout the paper. 
 
 We conclude this introduction by mentioning that it would be interesting to investigate whether under the assumption \eqref{strongEll} the flows \eqref{sixth order evolution equation} studied in \cite{FFLM2} converge to   \eqref{i2} as $\e\to 0^+$, perhaps using the methods developed in \cite{BMN}. This issue as well as  the extension of the results of this paper to three-dimensions will be addressed in future investigations.

\section{Preliminary results} \label{sec:preliminaries}

\subsection{Geometric preliminaries and notation}

Let $F\subset\R^2$ be a bounded open set of class $C^2$.   We denote the unit outer normal to $F$  by $\nu_F$ and the tangent vector $\tau_F$. Throughout the paper we choose the orientation so that   $\tau_F = \mathcal{R }\nu_F$, where $\mathcal{R}$ is  the counterclockwise rotation by $\pi/2$. 

 The differential of  a vector field $X$ along $\pa F$ 
is denoted by $\pa_\sigma X$. We recall that 
\[
\pa_\sigma \nu_F = k_F \tau_F \qquad \text{and} \qquad \pa_\sigma \tau_F = - k_F \nu_F,
\] 
where $k_F$ is the curvature of $\pa F$.  When  no confusion arises, we will simply write $\nu$, $\tau$,  and $k$ in place of $\nu_F$, $\tau_F$ and $k_F$. The tangential divergence of  $X$  is $\Div_\tau X := \pa_\sigma X  \cdot \tau$. The divergence theorem on $\pa F$ states that  for every vector field $X \in C^1(\pa F; \R^2)$ it holds
\beq\label{divform}
\int_{\pa F}  \Div_\tau X \, d \Ha^1 = \int_{\pa F} k \,  X \cdot \nu \, d \Ha^1.
\eeq

If the boundary of $F$ is of class $C^m$, with $m\geq 2$, then  the signed distance function $d_F$ is of class $C^m$ in a tubular neighborhood of $\pa F$. We may extend $\nu, \tau$ and  $k$ to such a  neighborhood of $\pa F$ by setting  $\nu := \nabla d_F$, $\tau :=\mathcal{R} \nu$ and $k := \Div \nu  = \Delta d_F$.

Throughout the paper, we fix a bounded Lipschitz open set $\Omega \subset \R^2$. Moreover,  $G$ will always denote a smooth reference set, with the property that  $G \subset\subset \Omega$. We will also denote by   $\pi_G$
  the orthogonal projection on $\pa G$ and   by  $\bar\eta$  a positive number such that  
\beq\label{bareta}
\text{$d_G$ and  $\pi_G$ are smooth in  $\mathcal{N}_{\bar\eta}(\pa G)$,}
\eeq
where  $\mathcal{N}_{\bar\eta}(\pa G)$ denotes the $\bar\eta$-tubular neighborhood of $\pa G$. 

 We now introduce a class of sets $F$ sufficiently ``close'' to $G$ so that the boundary can be written as 
\beq\label{bdr}
\partial F = \{ x + h_F(x) \nu_{G}(x) \mid x\in \pa G  \},
\eeq
for a suitable function $h_F$ defined on $\pa G$.
More precisely, for $k\in \N$ and $\alpha\in (0,1)$ we set
\begin{multline}\label{calH}
 \mathfrak{h}^{k,\alpha}_M(G):=\{F\subset\subset\Om:\, \eqref{bdr}\text{ holds for some $h_F\in C^{k,\alpha}(\pa G)$}, \\ \text{with } \|h_F\|_{L^\infty}\leq\bar\eta/2 \text{ and }\|h_F\|_{C^{k,\alpha}} \leq M\}.
\end{multline}
For  such sets $F$ we also  denote by $\pi_{F}^{-1}:\pa G \to \pa F$ the map $\pi_{F}^{-1}(x) = x + h_F(x) \nu_G(x)$
and set
\[
J_F := \sqrt{(1 + h_Fk_G)^2 + (\pa_\sigma h_F)^2},
\]
that is the tangential Jacobian on $\pa G$ of the map $\pi_{F}^{-1}$.
We recall now some useful transformation formulas:
 \beq \label{formula tangent}
\tau_F\circ \pi_{F}^{-1} = \frac{(1+h_F k_G ) \tau_G  + \pa_\sigma h_F   \nu_G }{J_F }
\eeq 
and
\beq \label{formula normal}
\nu_F\circ \pi_{F}^{-1} = \frac{ -\pa_\sigma h_F  \tau_G  + (1+h_F k_G )\nu_G}{J_F}.
\eeq
Similarly, the curvature $k_F$ of $F$  at $y = \pi_{F}^{-1}(x)$ is  given by
\beq \label{curvature formula}
k_F\circ \pi_{F}^{-1}  = \frac{- \pa_{\sigma\sigma}h_F(1+h_F k_G ) + 2 (\pa_\sigma h_F)^2 k_G + (1+h_Fk_G)^2k_G + h_F\pa_\sigma h_F \, \pa_\sigma k_G}{J_F^3} .
\eeq

We now fix some notation, which will be used throughout the paper. If $t\mapsto F_t$ is a (smooth) flow of sets, in order to simplify the notation, we will sometimes write $h_t$, $\nu_t$, $\tau_t$, and $k_t$ instead of $h_{F_t}$,
$\nu_{F_t}$,  $\tau_{F_t}$, and $k_{F_t}$, respectively. Similarly, we will set $k_{\vphi, t}:=g(\nu_t)k_t$.

Moreover, whenever we have a one-parameter family $(g_t)_t$ of functions (or vector fields) we shall denote by $\dot g_t$ the partial derivative with  respect to $t$ of the function 
$(x,t)\mapsto g_t(x)$, and by $\nabla^k g_t$ the $k$-th order differential of the function  $(x,t)\mapsto g_t(x)$ with respect to $x$.

\subsection{The energy functional} 
In this section we introduce the energy functional that underlies the flow. We also introduce the proper notions of stationary points and stability that will be needed in the study of the long-time behavior of the flow, see Section~\ref{sec:stability}.

As explained in the introduction, the free energy functional is the sum of  an anisotropic perimeter and a bulk elastic term.
 
We start by introducing the anisotropic surface energy density, which is given by a positively one-homogeneous function $\vphi \in C^\infty(\R^2\setminus \{0\}; (0,+\infty))$ 

\begin{equation} \label{ellipticity}
D^2 \vphi(\nu) \tau \cdot \tau \geq c_0 >0 
\end{equation}
for every $\nu \in \Sb^1$ and every $\tau \in \Sb^1$ such that $\tau \perp \nu$.  Note that the above condition is equivalent to requiring that the level sets of $\vphi$  have positive   curvature.

Concerning the elastic part, for  $F \subset \! \subset \Omega$ and for the {elastic displacement} $u: \Omega\setminus F\to \R^2$ we denote by $E(u)$ the symmetric part of $\nabla u$, that is, $E(u):= \frac{\nabla u + (\nabla u)^T}{2}$. In what follows,  $\C$ stands for  a fourth order {\em elasticity tensor}   acting on $2\times2$ symmetric matrices $A$, such that $\C A:A>0$ if $A \neq 0$. Finally we shall denote by 
 $Q(A) := \frac{1}{2}\C A : A$ the {\em elastic energy density}. 

We are now ready to write the energy functional. For a fixed {\em boundary displacement}  $w_0\in H^{\frac12}(\pa \Om)$, we set 
\begin{equation} \label{energy}
J(F) :=  \int_{\partial F} \vphi(\nu_F) \, d\Ha^1 + \int_{\Omega \setminus F} Q(E(u_F))\, dx,
\end{equation}
where $u_F$ is the  elastic equilibrium  satisfying the Dirichlet boundary condition $w_0$ on a fixed relatively open subset $\pa_D \Om\subseteq \pa \Om$. More precisely, $u_F$ is the unique solution in  $H^1(\Omega \setminus F; \R^2)$ of the following elliptic system
\beq\label{uF}
\begin{cases}
\Div \C E(u_F)=0 & \text{in }\Om\setminus F,\\
\C E(u_F)[\nu_F]=0 & \text{on }\pa F\cup (\pa \Om\setminus \pa_D\Om),\\
u_F=w_0 &\text {on }\pa_D\Om.
\end{cases}
\eeq

Next, we provide the first and the second variation formulas for \eqref{energy}.
We start by recalling the well-known first variation formula for the anisotropic  perimeter.
To this aim, for any  
 vector field $X \in C_c^1(\R^2; \R^2)$, let  $(\Phi_t)_{t\in (-1,1)}$  be  the associated flow, that is the solution of  
 \beq\label{flussoX}
 \begin{cases}
\displaystyle \frac{\pa \Phi_t}{\pa t}=X(\Phi_t),\\
 \Phi_0=Id.
 \end{cases}
 \eeq
 Then we have
 \[
\frac{d }{d t} \Bigl|_{t=0} \int_{\partial \Phi_t(F)} \vphi(\nu_{\Phi_t(F)}) \, d\Ha^1 = \int_{\partial F} k_\vphi  X\cdot \nu  \, d\Ha^1,
\]
where the {\em anisotropic curvature} $k_\vphi$ of $\pa F$ is given by $k_{\vphi} := \diver_\tau (\nabla \vphi(\nu))$ and can be written also as 
 \[
\begin{split}
k_{\vphi} &= \diver_\tau (\nabla \vphi(\nu)) =  \diver (\nabla \vphi(\nu))=  D^2 \vphi(\nu) : D\nu  \\
&= (D^2 \vphi(\nu) \tau \cdot  \tau)\, k  \\
&=:  g(\nu)\, k,
\end{split}
\]
on $\pa F$. 

Concerning the full functional $J$, we have the following theorem.

\begin{theorem}\label{th:12var}
 Let $F\subset\subset\Om$ be a smooth set, $X \in C_c^1(\Om; \R^2)$ and let $(\Phi_t)_{t\in (-1,1)}$ be the associated flow as in \eqref{flussoX}. Set $\psi:=X\cdot \nu_F$  and $X_\tau:= (X\cdot \tau_F)\tau_F$ on $\pa F$. 
 Then, 
 \beq\label{eq:J'}
\frac{d}{dt}J(\Phi_t(F))_{\bigl|_{t=0}}=\int_{\pa F}(g(\nu_F)k_F-Q(E(u_F))) \psi\, d\Ha^{1}. 
\eeq
If in addition $\Div X=0$ in a neighborhood of $\pa F$ we have
\begin{align}\label{eq:J''}
\frac{d^2}{dt^2}J(\Phi_t(F))_{\bigl|_{t=0}}&=
\int_{\pa F}  \bigl[g(\nu_F) (\pa_{\sigma}\psi)^2-      g(\nu_F) k_F^2  \psi^2\bigr] \, d \Ha^1  - 2 \int_{\Omega \setminus  F} Q(E(u_\psi)) \, dx\nonumber\\ 
&  -  \int_{\pa F}     \pa_{\nu_F} (Q(E(u_F)))  \psi^2 \, d \Ha^1
-\int_{\pa F}(g(\nu_F)k_F-Q(E(u_F)))\Div_\tau(\psi X_\tau)\, d\Ha^1,
\end{align}
where the function $u_\psi$ is the unique solution in $H^1(\Om\setminus F; \R^2)$, with 
$u_\psi=0$ on $\pa_D\Om$, of
\begin{equation}\label{eq u dot2}
\int_{\Om \setminus F} \C E(u_\psi) : E(\vphi) \, dx = -\int_{\pa F}\Div_\tau (\psi E(u_F) ) \cdot \vphi \, d \Ha^1 
\end{equation}
for all $\varphi \in H^1(\Om \setminus F; \R^2)$ such that $\varphi = 0$ on $\pa_D\Omega$. 
\end{theorem}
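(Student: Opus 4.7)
The strategy is to decompose $J=P_\vphi+E_{el}$ with $E_{el}(F):=\int_{\Om\setminus F}Q(E(u_F))\, dx$. The first variation of the anisotropic perimeter $P_\vphi$ has been recorded just above the theorem statement, so proving \eqref{eq:J'} reduces to computing that of $E_{el}$. Applying the Reynolds transport formula to the moving domain $\Om\setminus F_t$ (with $F_t:=\Phi_t(F)$) gives
\[
\frac{d}{dt}\Big|_{t=0} E_{el}(F_t) = \int_{\Om\setminus F}\C E(u_F):E(\dot u)\, dx - \int_{\pa F} Q(E(u_F))\,\psi\, d\H^1,
\]
the minus sign in the boundary term arising because $-\nu_F$ is the outward unit normal to $\Om\setminus F$ on $\pa F$. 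The interior term vanishes by an envelope-type argument: since $w_0$ is $t$-independent one has $\dot u=0$ on $\pa_D\Om$, so $\dot u$ is admissible in the weak form of \eqref{uF} and the integral is zero. Adding the two contributions yields \eqref{eq:J'}.

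For the second variation I would differentiate \eqref{eq:J'} once more in $t$. Since $s\mapsto\Phi_{t+s}\circ\Phi_t^{-1}$ has velocity $X$ at $s=0$ and emanates from $F_t$, the first variation applies at every small $t$ and yields $\frac{d}{dt} J(F_t)=\int_{\pa F_t}(k_{\vphi,t}-Q(E(u_{F_t})))X\cdot\nu_t\, d\H^1$. To differentiate this I would use the surface-transport formula
\[
\frac{d}{dt}\Big|_{t=0}\int_{\pa F_t} g_t\, d\H^1 = \int_{\pa F}\bigl[\pa_t G+\nabla G\cdot X+G\,\Div_\tau X\bigr]\, d\H^1
\]
for any smooth extension $G$ of $g_t$, combined with the standard shape-derivative formulas for $\nu_t$ and $k_{\vphi,t}$. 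The hypothesis $\Div X=0$ near $\pa F$ and the identity $\Div_\tau X=\Div_\tau X_\tau+k\psi$ allow me to separate normal contributions (in $\psi$) from tangential corrections (in $X_\tau$). For the perimeter term, differentiating $k_{\vphi,t}=\Div_\tau\nabla\vphi(\nu_t)$ and integrating by parts in $\sigma$ produces the Jacobi quadratic form $\int_{\pa F}[g(\nu)(\pa_\sigma\psi)^2-g(\nu)k^2\psi^2]\, d\H^1$ together with the tangential piece $-\int_{\pa F}k_\vphi\Div_\tau(\psi X_\tau)\, d\H^1$.

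For the elastic term the crucial observation is that the shape derivative $u'$ of $u_{F_t}$ at $t=0$ coincides with the auxiliary field $u_\psi$ defined by \eqref{eq u dot2}. Indeed, differentiating the weak formulation of \eqref{uF} against a time-independent test function $\vphi$ shows that $u'$ satisfies $\int_{\Om\setminus F}\C E(u'):E(\vphi)\, dx=\int_{\pa F}\C E(u_F):E(\vphi)\psi\, d\H^1$; decomposing $\nabla\vphi$ on the normal-tangent frame, using $\C E(u_F)[\nu_F]=0$ on $\pa F$ to drop the normal-gradient part, and integrating by parts in $\sigma$ along $\pa F$ rewrites the right-hand side as $-\int_{\pa F}\Div_\tau(\psi\,\C E(u_F))\cdot\vphi\, d\H^1$, i.e., \eqref{eq u dot2}, so $u'=u_\psi$ by uniqueness. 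Consequently $\int_{\pa F}\C E(u_F):E(u')\psi\, d\H^1=2\int_{\Om\setminus F}Q(E(u_\psi))\, dx$ (by testing with $\vphi=u_\psi$). In the transport formula the pointwise $t$-derivative of $-Q(E(u_{F_t}))\psi$ supplies the $-2\int Q(E(u_\psi))$ contribution, the spatial derivative $-\nabla Q(E(u_F))\cdot X\,\psi$ yields $-\int_{\pa F}\pa_{\nu_F}(Q(E(u_F)))\psi^2\, d\H^1$ (after decomposing $X=\psi\nu+X_\tau$), and the Reynolds $Q\,\Div_\tau X$ correction combines with its perimeter counterpart into the compact tangential term $-\int_{\pa F}(g(\nu_F)k_F-Q(E(u_F)))\Div_\tau(\psi X_\tau)\, d\H^1$.

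The main technical obstacle will be the rigorous derivation of the boundary identity for $u'$ and its identification with $u_\psi$ via \eqref{eq u dot2}: this hinges on the Neumann condition $\C E(u_F)[\nu_F]=0$ to drop the normal-gradient piece of the test function and then on a careful integration by parts along the closed curve $\pa F$ to produce the divergence-type right-hand side. Without this identification the elastic contribution cannot be recast in the intrinsic form appearing in \eqref{eq:J''}. The remaining bookkeeping — collecting the residual tangential pieces from both parts using $\Div X=0$ and $\Div_\tau(\psi\nu)=k\psi$ — is then routine.
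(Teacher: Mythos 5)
Your proof is correct and follows essentially the same strategy as \cite{CJP, FM09, Bo}, the references to which the paper defers this computation: decompose into perimeter and elastic parts, apply Reynolds/surface-transport to the moving domain and curve, use the envelope argument (testing the weak form of \eqref{uF} with the shape derivative $u'$, which vanishes on $\pa_D\Om$) to kill the bulk term in the first variation, and for the second variation identify $u'$ with $u_\psi$ by exploiting the Neumann condition $\C E(u_F)[\nu_F]=0$ to drop the normal-gradient part of the test function before integrating by parts along $\pa F$. One small observation: as printed, \eqref{eq u dot2} reads $\Div_\tau(\psi E(u_F))$ without the elasticity tensor, but comparison with \eqref{eq u dot} and with the computation in the proof of Proposition~\ref{energia identiteetit} shows it should be $\Div_\tau(\psi\,\C E(u_F))$, which is exactly what your integration by parts produces.
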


 Formulas \eqref{eq:J'} and \eqref{eq:J''} have been derived  in \cite{CJP} for the case where $\phi$ is the Euclidean norm, and in a slightly different setting, namely when $F$ is the subgraph of a periodic function,  in  \cite{FM09, Bo}. The very same calculations apply to the more general situation considered here.

Throughout the paper, given a (sufficiently smooth)   set $F \subset \!\subset  \Omega$, we denote by $\Gamma_{F,1}, \dots, \Gamma_{F, m}$ the $m$ connected components of $\pa F$ and by $F_i$ the bounded open set enclosed by $\Gamma_{F,i}$. Note that the $F_i$'s are not in general the connected components of $F$ and it may happen that $F_i\subset F_j$ for some $i \neq j$.  

We are  interested in area preserving variations, in the following sense.
\begin{definition}\label{def:admissibleX}
 Let $F\subset\subset\Om$ be a smooth set.  Given a vector field 
 $X\in C^\infty_c(\Om; \R^2)$, we say that  the associated flow
  $(\Phi_t)_{t\in (-1,1)}$ is {\em admissible for $F$} if there exists $\e_0\in (0,1)$ such that
  $$
  |\Phi_t(F_i)|=|F_i|\quad\text{for $t\in (-\e_0,\e_0)$ and $i=1,\dots, m$.}
  $$
\end{definition}
\begin{remark}\label{rm:mn}
Note that if the flow associated with $X$ is admissible in the sense of the previous definition, then 
for $i=1,\dots, m$ we have 
$$
\int_{\Gamma_{F,i}}X\cdot\nu_F\, d\Ha^1=0.
$$
In view of this remark it is convenient to introduce the space
$\tilde H^1(\pa F)$   consisting of all functions $\psi \in H^1(\pa F)$ with zero average on each component of  $\pa F$, i.e., 
\[
\int_{\Gamma_{F,i}} \psi \, d \Ha^1 = 0 \qquad \text{for every } \, i = 1, \dots, m.
\]
We observe that given $\psi\in \tilde H^1(\pa F)\cap C^{\infty}(\pa F)$ it is possible to construct a  sequence of  vector fields $X_n\in C^\infty_c(\Om; \R^2)$, with $\Div X_n=0$ in a neighborhood of $\pa F$, such that 
$X_n\cdot \nu_F\to \psi$ in $C^1(\pa F)$, see \cite[Proof of Corollary~3.4]{AFM} for the details. Note that in particular   the flows associated with $X_n$ are admissible. 
\end{remark}

\begin{definition}
\label{def stationarity}
Let $F \subset \!\subset  \Omega$  be a set of class $C^2$. We say that $F$ is \emph{stationary} if
$$
\frac{d}{dt}J(\Phi_t(F))_{\bigl|_{t=0}}=0
$$
for all admissible flows in the sense of Definition~\ref{def:admissibleX}.
\end{definition}

\begin{remark}\label{rm:station}
By Remark~\ref{rm:mn} and in view of \eqref{eq:J'} it follows that a set $F \subset \!\subset  \Omega$  of class $C^2$ is stationary if and only if  there exist constants $\lambda_1, \dots, \lambda_m$ such that  
\[
g(\nu_F)k_F - Q(E(u_F)) = \lambda_i \qquad \text{on }\, \Gamma_{F,i}
\]
for every $i = 1,\dots, m$. In turn, note that if $F$ is stationary, then the second variation formula \eqref{eq:J''} reduces to 
\begin{align} \label{sv}
\frac{d^2}{dt^2}J(\Phi_t(F))_{\bigl|_{t=0}}= &
\int_{\pa F}  \bigl[g(\nu_F) (\pa_{\sigma}\psi)^2-      g(\nu_F) k_F^2  \psi^2\bigr] \, d \Ha^1  \nonumber\\
&- 2 \int_{\Omega \setminus  F} Q(E(u_\psi)) \, dx  
  -  \int_{\pa F}     \pa_{\nu_F} (Q(E(u_F)))  \psi^2 \, d \Ha^1,
\end{align}
where we recall that $\psi= X\cdot \nu_F$ and $u_\psi$ is the function satisfying \eqref{eq u dot2}.

Note that if $F$ is a  sufficiently regular (local) minimizer of \eqref{energy} under the constraint 
$|F|=const.$, then  there exists a  constant $\lambda$ such that 
\[
g(\nu_F)k_F - Q(E(u_F)) = \lambda \qquad \text{on }\, \pa F.
\]
Thus, our notion of stationarity differs from the usual notion of criticality just recalled.
\end{remark}
 
 In view of \eqref{sv},  for any set $F\subset\subset\Om$ of class $C^2$ it is convenient to introduce the  quadratic form $\pa^2 J(F)$ defined on $\tilde H^1(\pa F)$ as
\beq \label{eq:pa2J}
\begin{split}
\pa^2J(F)[\psi] :=& \int_{\pa F}  \bigl[g(\nu_F) (\pa_{\sigma}\psi)^2-      g(\nu_F) k_F^2  \psi^2\bigr] \, d \Ha^1  \\
&- 2 \int_{\Omega \setminus  F} Q(E(u_\psi)) \, dx  
  -  \int_{\pa F}     \pa_{\nu_F} (Q(E(u_F)))  \psi^2 \, d \Ha^1,
\end{split}
\eeq  
where $u_\psi$ is the unique solution of  \eqref{eq u dot2} under the Dirichlet condition $u_\psi=0$ on 
$\pa_D\Om$. 
We conclude this section by defining the notion of stability for a stationary point. 
\begin{definition}\label{def:stable}
Let $F\subset\subset\Om$ be a stationary set in the sense of Definition~\ref{def stationarity}. We say that $F$ is \emph{strictly stable} if 
\beq\label{j2>0}
\pa^2J(F)[\psi]>0\qquad\text{for all }\psi\in \tilde H^1(\pa F)\setminus\{0\}.
\eeq
\end{definition}
It is not difficult to see that \eqref{j2>0} is equivalent to the coercivity of $\pa^2J(F)$ on $ \tilde H^1(\pa F)$. More precisely, \eqref{j2>0} holds if and only if there exists $m_0>0$ such that 
\beq\label{emmepiccolo0}
\pa^2J(F)[\psi]\geq m_0\|\psi\|^2_{\tilde H^1(\pa F)}\qquad\text{for all }\psi\in \tilde H^1(\pa F),
\eeq
see \cite{FM09}. In turn the latter coercivity property  is stable  with respect to small $C^{2,\alpha}$-perturbations.  More precisely, we have:
\begin{lemma}\label{lemma:j2>0near}
Assume that the reference set $G\subset\subset\Om$ is  a (smooth) strictly stable stationary set in the sense of Definition~\ref{def:stable} and fix $\alpha\in (0,1)$. Then, there exists  $\sigma_0>0$ such that  for all $F\in \mathfrak{h}^{2,\alpha}_{\sigma_0}(G)$ (see \eqref{calH}) we have
$$
\pa^2J(F)[\psi]\geq \frac{m_0}2\|\psi\|^2_{\tilde H^1(\pa F)} \text{ for all $\psi\in \tilde H^1(\pa  F)$,}
$$
where $m_0$ is the constant in \eqref{emmepiccolo0}.
\end{lemma}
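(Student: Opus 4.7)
The plan is to argue by perturbation, transferring the quadratic form $\pa^2J(F)$ back to the reference set $G$ via the diffeomorphism $\pi_F^{-1}:\pa G\to\pa F$, $x\mapsto x+h_F(x)\nu_G(x)$, and then invoking the coercivity \eqref{emmepiccolo0} for $\pa^2J(G)$. Given $\psi\in\tilde H^1(\pa F)$, I would set $\tilde\psi:=\psi\circ\pi_F^{-1}$ and produce $\hat\psi\in\tilde H^1(\pa G)$ by subtracting from $\tilde\psi$ its average $\bar\psi_i$ on each component $\Gamma_{G,i}$. When $\|h_F\|_{C^{2,\alpha}}\leq\sigma_0$ one has $\|J_F-1\|_{L^\infty}\leq C\sigma_0$, so that the vanishing of the integral of $\psi$ on $\Gamma_{F,i}$ translates, after change of variables, into $|\bar\psi_i|\leq C\sigma_0\|\tilde\psi\|_{L^2(\Gamma_{G,i})}$. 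A routine comparison of norms via the Jacobian $J_F$ then gives $\|\hat\psi\|^2_{\tilde H^1(\pa G)}\geq (1-C\sigma_0)\|\psi\|^2_{\tilde H^1(\pa F)}$, so that the statement will follow once I establish the comparison
\[
\pa^2J(F)[\psi]\geq\pa^2J(G)[\hat\psi]-\omega(\sigma_0)\|\psi\|^2_{\tilde H^1(\pa F)},
\]
with $\omega(\sigma_0)\to 0^+$ as $\sigma_0\to 0^+$, choosing $\sigma_0$ small enough at the end.

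The purely geometric contribution $\int_{\pa F}g(\nu_F)[(\pa_\sigma\psi)^2-k_F^2\psi^2]\,d\H^1$ is the easier part: using \eqref{formula tangent}, \eqref{formula normal} and \eqref{curvature formula} together with the identity $\pa_\sigma\tilde\psi=\pa_\sigma\hat\psi$, I would perform the change of variables and note that all the geometric quantities ($\nu_F$, $k_F$, $J_F$, $g(\nu_F)$) differ from their $G$-counterparts by $O(\sigma_0)$ in $C^{0,\alpha}(\pa G)$. The resulting discrepancy is bounded by $C\sigma_0\|\psi\|^2_{H^1(\pa F)}$, which is absorbed into $\omega(\sigma_0)\|\psi\|^2_{\tilde H^1(\pa F)}$ after using the bound on $\bar\psi_i$ to control the $L^2$-part of the norm.

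The hard part will be handling the two elastic terms in \eqref{eq:pa2J}, which depend non-locally on $F$ and on $\psi$ through the systems \eqref{uF} and \eqref{eq u dot2}. Here I would rely on a standard domain-perturbation argument: extending $\pi_F^{-1}$ to a $C^{2,\alpha}$-diffeomorphism $\Psi_F:\overline{\Om\setminus G}\to\overline{\Om\setminus F}$ depending smoothly on $h_F$, and pulling \eqref{uF} back to a uniformly elliptic system on the fixed domain $\Om\setminus G$ whose coefficients converge in $C^{0,\alpha}$ to those solved by $u_G$, Schauder estimates for the Lamé system yield $\|u_F\circ\Psi_F-u_G\|_{C^{2,\alpha}}\to 0$ as $\sigma_0\to 0^+$. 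This immediately gives the convergence of $\int_{\pa F}\pa_{\nu_F}(Q(E(u_F)))\psi^2\,d\H^1$ to its $G$-counterpart up to an error $\omega(\sigma_0)\|\psi\|^2_{L^2(\pa F)}$. The same machinery applied to \eqref{eq u dot2}, whose linearity in $\psi$ is essential for the uniformity in $\psi$, produces
\[
\Bigl|\int_{\Om\setminus F}Q(E(u_\psi))\,dx-\int_{\Om\setminus G}Q(E(\hat u_{\hat\psi}))\,dx\Bigr|\leq\omega(\sigma_0)\|\psi\|^2_{H^1(\pa F)},
\]
where $\hat u_{\hat\psi}$ denotes the solution on $\Om\setminus G$ of the analogue of \eqref{eq u dot2} with datum $\hat\psi$. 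Summing the geometric and elastic estimates yields the required comparison and completes the proof.
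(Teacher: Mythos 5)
Your proposal is essentially the approach the paper intends: the paper simply refers to \cite[Lemma~4.12]{FFLM3}, which carries out exactly this kind of transplantation argument (pull back to the reference boundary via $\pi_F^{-1}$, correct the averages to land in $\tilde H^1(\pa G)$, compare the geometric and elastic contributions term by term using the closeness of $h_F$ to zero and the continuity of $h\mapsto u_{F_h}$, then invoke coercivity on $G$). The one place where your write-up is somewhat compressed is the treatment of $\int_{\Om\setminus F}Q(E(u_\psi))\,dx$: you correctly identify that linearity of $\psi\mapsto u_\psi$ gives uniformity in $\psi$, but the needed estimate is really $\|u_\psi\circ\Psi_F-\hat u_{\hat\psi}\|_{H^1(\Om\setminus G)}\le\omega(\sigma_0)\|\psi\|_{H^{1/2}(\pa F)}$, obtained by testing the difference of the two pulled-back variational identities with itself, using Korn's inequality and the $C^{2,\alpha}$-closeness of coefficients and of $u_F$ to $u_G$ from Lemma~\ref{elastiset}; and to compare $\hat u_{\hat\psi}$ with $\hat u_{\tilde\psi}$ one should also note that their difference solves the linear system with datum the piecewise-constant $\hat\psi-\tilde\psi$, whose size is $O(\sigma_0\|\tilde\psi\|_{L^2})$ by your bound on $\bar\psi_i$. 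With these details spelled out, the argument is correct.
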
 
\begin{proof}The proof of the lemma goes as in \cite[Proof of Lemma~4.12]{FFLM3}, where the case of $F$ being the subgraph of a periodic  function of two variables is considered. Although the geometric framework here is more general, we can follow exactly the same line of argument up to the obvious changes due to the different setting (and some simplifications due the fact that here we work in two-dimensions). We refer the reader to the aforementioned reference for the details. 
\end{proof}
Recall that $G_1$, \dots, $G_m$ are the bounded open sets enclosed by the connected components $\Gamma_{G,1}$, \dots, $\Gamma_{G,m}$ of the boundary $\pa G$ of the reference set and observe
that if $F\in \mathfrak{h}^{2,\alpha}_M(G)$,   then $\pa F$ has the same number $m$ of connected components $\Gamma_{F,1}$, \dots, $\Gamma_{F,m}$, which can be numbered in such a way that 
\beq\label{numbered}
\Gamma_{F,i}=  \{ x + h_{F}(x) \nu_{G}(x) \mid x\in \Gamma_{G,i}  \},
\eeq
 for suitable $h_{F}\in C^{k,\alpha}(\pa G)$.
 
 In the next lemma we show that  pairs of sets which are sufficiently close in a $C^{2,\alpha}$-sense  can always be connected through  area preserving flows in the sense of 
 Definition~\ref{def:admissibleX}. More precisely we have:
 \begin{lemma}\label{lemma:connect}
Let $M>0$ and $\alpha\in (0,1)$. There exists  $C>0$ with the following property: If $F_1$, $F_2\in  \mathfrak{h}^{2,\alpha}_M(G)$ are such that  $|F_{1, i}|=|F_{2,i}|$, $i=1,\dots, m$, then there exists a flow
 $(\Phi_t)_{t\in (-1,1)}$ admissible for $F_1$ in the sense of Definition~\ref{def:admissibleX}, such that $\Phi_0(F_1)=F_1$,  $\Phi_1(F_1)=F_2$,  $|\Phi_t(F_{1,i})|=|F_{1,i}|$ for all $i=1,\dots, m$ and $t\in [0,1]$. Moreover 
 \beq\label{vicinovicino}
 \sup_{t\in [0,1]}\|\Phi_t-Id\|_{C^{2,\alpha}(\mathcal{N}_{{\bar\eta}/2}(\pa G))}\leq C\|h_{F_1}- h_{F_2}\|_{C^{2,\alpha}(\pa G)},
 \eeq
and the velocity field $X$ satisfies $\Div X=0$ in the $\bar \eta/2$-neighborhood $\mathcal{N}_{{\bar\eta}/2}(\pa G)$.
 Here $F_{i, 1}$, \dots, $F_{i,m}$ denote the bounded open sets enclosed by the connected components $\Gamma_{F_i, 1}, \dots, \Gamma_{F_i, m}$ of $\pa F_i$, $i=1,2$, which are supposed to be numbered as in \eqref{numbered}. 
\end{lemma}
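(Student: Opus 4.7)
The plan is to reduce the lemma to constructing an area-preserving one-parameter family of height functions $t\mapsto h(t)\in C^{2,\alpha}(\pa G)$ connecting $h_{F_1}$ and $h_{F_2}$, and then to realize this family by the flow of a divergence-free vector field built from a stream function.

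First I would fix smooth bump functions $\psi_1,\dots,\psi_m\in C^\infty(\pa G)$ with disjoint supports $\operatorname{supp}\psi_i\subset\Gamma_{G,i}$ and $\int_{\Gamma_{G,i}}\psi_i\,d\H^1>0$, and look for $h(t)=(1-t)h_{F_1}+t h_{F_2}+\sum_{i=1}^m c_i(t)\psi_i$ with $c_i(0)=c_i(1)=0$. Exploiting the exact formula
\[
A_i(h):=|F_i|=|G_i|+\int_{\Gamma_{G,i}}\bigl(h+\tfrac12 k_G h^2\bigr)\,d\H^1,\qquad F\in\mathfrak{h}^{2,\alpha}_M(G),
\]
$A_i$ is a quadratic polynomial in $h$, the Jacobian of $c\mapsto(A_i(h(t)))_i$ at $c=0$ is diagonal with strictly positive entries, and the implicit function theorem furnishes the corrections $c_i(t)$. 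Since $|F_{1,i}|=|F_{2,i}|$, the key identity
\[
\int_{\Gamma_{G,i}}(1+h^{\mathrm{lin}}(t)k_G)(h_{F_2}-h_{F_1})\,d\H^1=\bigl(t-\tfrac12\bigr)\int_{\Gamma_{G,i}} k_G(h_{F_2}-h_{F_1})^2\,d\H^1,
\]
with $h^{\mathrm{lin}}(t):=(1-t)h_{F_1}+t h_{F_2}$, forces both $|c_i(t)|$ and $|\dot c_i(t)|$ to be $O(\|h_{F_1}-h_{F_2}\|_{C^{2,\alpha}}^2)$, whence $\sup_{t\in[0,1]}\|\dot h(t)\|_{C^{2,\alpha}(\pa G)}\le C\|h_{F_1}-h_{F_2}\|_{C^{2,\alpha}(\pa G)}$.

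Next, I build the velocity field. Let $F(t)$ be the set with $\pa F(t)=T_t(\pa G)$, $T_t(x):=x+h(t)(x)\nu_G(x)$. Differentiating $A_i(h(t))\equiv|F_{1,i}|$ gives $\int_{\Gamma_{G,i}}\dot h(t)(1+h(t)k_G)\,d\H^1=0$, so the tangential primitive
\[
f_t(x):=\int_{x_{0,i}}^{x}\dot h(t)(x')\bigl(1+h(t)(x')k_G(x')\bigr)\,d\H^1(x'),\qquad x\in\Gamma_{G,i},
\]
is single-valued and $C^{3,\alpha}$ in $x$. Extend by $\tilde f_t(y):=\chi(d_G(y))\,f_t(\pi_G(y))$, where $\chi\in C^\infty_c(\R)$ equals $1$ on $\{|s|\le 3\bar\eta/4\}$ and vanishes outside $(-\bar\eta,\bar\eta)$, and extend by zero outside $\mathcal{N}_{\bar\eta}(\pa G)$. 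Then $X_t:=\nabla^\perp \tilde f_t$ is a $C^{2,\alpha}$ vector field, compactly supported in $\Om$, and divergence-free everywhere. Using \eqref{formula tangent}--\eqref{formula normal} and the identity $(D\pi_G)^T\tau_G=\tau_G/(1+d_G k_G)$ in normal coordinates, a direct computation yields, at any $y=T_t(x)\in\pa F(t)$ (where $\chi\equiv 1$),
\[
\nabla\tilde f_t(y)=\dot h(t)(x)\,\tau_G(x),\qquad X_t(y)=\dot h(t)(x)\,\nu_G(x).
\]

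Let $(\Phi_t)$ be the flow of $X_t$ with $\Phi_0=\mathrm{Id}$. Since $\pa_t T_t(x)=\dot h(t)(x)\nu_G(x)=X_t(T_t(x))$ for every $x\in\pa G$, uniqueness for $\dot\gamma=X_t(\gamma)$ gives $\Phi_t\circ T_0=T_t$, hence $\Phi_t(\pa F_1)=\pa F(t)$ and $\Phi_t(F_1)=F(t)$ for all $t\in[0,1]$; area preservation on $[0,1]$ is built into Step 1. Standard Gronwall bounds for flows of $C^{2,\alpha}$ fields then give
\[
\sup_{t\in[0,1]}\|\Phi_t-\mathrm{Id}\|_{C^{2,\alpha}(\mathcal{N}_{\bar\eta/2}(\pa G))}\le C\sup_t\|X_t\|_{C^{2,\alpha}}\le C\|h_{F_1}-h_{F_2}\|_{C^{2,\alpha}(\pa G)},
\]
which is \eqref{vicinovicino}; extending $X_t$ to $t\in(-1,1)$ via a smooth temporal cutoff supported in $(-1/2,1]$ produces the admissible flow in the sense of Definition~\ref{def:admissibleX}. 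The main obstacle is Step 2: the specific choice of extending $f_t$ by composition with $\pi_G$ and a cut-off in $d_G$ is exactly what simultaneously makes $X_t$ divergence-free, gives the correct normal velocity on the moving boundary, and, crucially, aligns the tangential component so that the trajectories of $X_t$ track the parametrization $T_t$ pointwise, upgrading the set identity $\Phi_t(\pa F_1)=\pa F(t)$—which would only hold up to reparametrization for a generic divergence-free extension—to the pointwise identity $\Phi_t\circ T_0=T_t$.
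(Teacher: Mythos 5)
Your strategy is genuinely different from the paper's, and it has two real gaps.

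\textbf{The time-dependence issue.} You construct a \emph{time-dependent} velocity field $X_t=\nabla^\perp\tilde f_t$ and then take $\Phi_t$ to be its (nonautonomous) flow. But Definition~\ref{def:admissibleX} is formulated for an \emph{autonomous} field $X\in C^\infty_c(\Om;\R^2)$, and so is the second-variation formula \eqref{eq:J''}, which is used in Proposition~\ref{stationary} precisely along the flow produced by this lemma. If $X$ depends on $t$, the identity
$\frac{d^2}{dt^2}J(\Phi_t(F))|_{t=s}=\pa^2J(\Phi_s(F_1))[X\cdot\nu_s]-\int(\cdots)\Div_\tau(X_\tau(X\cdot\nu_s))$
picks up an extra term $\int_{\pa F_s}(g(\nu_s)k_s-Q(E(u_s)))\,\dot X\cdot\nu_s$, and the subsequent argument in Proposition~\ref{stationary} breaks. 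The paper avoids this by producing a genuinely \emph{static} field: $X(x)=\bigl(\int_{h_{F_1}(\pi_G x)}^{h_{F_2}(\pi_G x)}\zeta(\pi_G x+s\nu_G(\pi_G x))^{-1}\,ds\bigr)\,\zeta(x)\nabla d_G(x)$, which is divergence-free because the scalar prefactor is constant along the normal lines that are the trajectories of $\zeta\nabla d_G$, and which is calibrated so that time $1$ moves $\pa F_1$ exactly to $\pa F_2$. There is no obvious way to upgrade your $X_t$ to an autonomous field: that would require, for each $y$ in the tubular neighborhood, a well-defined time $t(y)$ with $h(t(y))(\pi_G(y))=d_G(y)$, which presupposes monotonicity of $t\mapsto h(t)(x)$ for each $x$ -- a property your linear-plus-correction path does not enjoy in general.

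\textbf{The area-preserving path.} Because your field is time-dependent, $\Div X_t=0$ no longer makes $t\mapsto|\Phi_t(F_{1,i})|$ affine, so you must enforce area-preservation by hand via the corrections $c_i(t)$. This is where a second gap appears. Since $A_i$ is a quadratic polynomial in $c_i$, solving $A_i(h^{\mathrm{lin}}(t)+c_i\psi_i)=|F_{1,i}|$ amounts to solving a scalar quadratic $a(t)+b(t)c_i+\tfrac12 d\,c_i^2=0$ with $b(t)>0$, $d=\int_{\Gamma_{G,i}}k_G\psi_i^2$ independent of $t$, and $a(t)=\tfrac12\bigl(\int_{\Gamma_{G,i}}k_G(h_2-h_1)^2\bigr)t(t-1)$. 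The branch continuously connecting $c_i(0)=0$ to $c_i(1)=0$ exists only if the discriminant $b(t)^2-2a(t)d$ stays nonnegative on $[0,1]$, and that is not guaranteed for arbitrary $M$: rescaling $\psi_i$ does not improve the ratio $a\,d/b^2$, and $|a|$ is controlled only by $\|k_G\|_\infty\bar\eta^2$, which is a fixed quantity depending on $G$. The ``implicit function theorem furnishes the corrections'' claim is therefore not justified uniformly on $[0,1]$; it works only when $\|h_{F_1}-h_{F_2}\|$ is small, but the lemma is stated for all $F_1,F_2\in\mathfrak h^{2,\alpha}_M(G)$. By contrast, in the paper's construction area-preservation is free: with an autonomous divergence-free $X$, $\frac{d^2}{dt^2}|\Phi_t(F_{1,i})|=\int_{\Phi_t(\Gamma_{F_1,i})}(\Div X)(X\cdot\nu)=0$, so $t\mapsto|\Phi_t(F_{1,i})|$ is affine and hence constant because it takes the same value at $t=0$ and $t=1$.

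Your stream-function computation ($\nabla\tilde f_t(T_t(x))=\dot h_t(x)\tau_G(x)$, hence $X_t(T_t(x))=\pm\dot h_t(x)\nu_G(x)$) and the conclusion $\Phi_t\circ T_0=T_t$ are correct given the path, and they are a nice dual to the paper's normal-field construction (a $\nabla^\perp$ of a primitive, rather than a rescaled $\nabla d_G$). But the two gaps above -- the time-dependence, which conflicts with the definition the lemma is supposed to serve, and the unjustified solvability of the corrections -- mean the argument as written does not prove the stated lemma.
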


\begin{proof}
We adapt the construction of \cite[Proposition~3.4]{Morini}.
We start by constructing a $C^\infty$~vector-field $\tilde X:{\mathcal N}_{\bar \eta}(\pa G)\to\R^2$     satisfying 
\begin{equation}\label{campo1}
\Div \tilde X=0\quad\text{in ${\mathcal N}_{\bar \eta}(\pa G)$},\qquad\quad \tilde X= \nu_G\quad\text{on $\pa G$}.
\end{equation}
To this aim, let $\zeta$ be the solution of 
$$
\begin{cases}
\nabla \zeta\cdot \nabla d_G+\zeta\Delta d_G=0 & \text{in ${\mathcal N}_{\bar \eta}(\pa G)$,}\\
\zeta=1 & \text{on $\pa G$.}
\end{cases}
$$
We may solve the above  PDE by  the method of characteristics, constructing such a $\zeta$ amounts to solving  
for every $x\in\pa G$ the Cauchy problem
$$
\begin{cases}
(f_x)^\prime(t)+f_x(t)\Delta d_G(x+t \nu_G(x))=0 & \text{in $(-\bar\eta,\bar\eta)$,} \\
f_x(0)=1\,,
\end{cases}
$$
and setting 
$$
\zeta(x+t \nu_G(x)):=f_x(t)=\exp\Bigl(-\int_0^t\Delta d_G(x+s \nu_G(x))\, ds\Bigr)\,.
$$
We can  now define   $\tilde X:=\zeta\nabla d_G$ and check that 
$\Div(\zeta\nabla d_G)= \nabla \zeta\cdot \nabla d_G+\zeta\Delta d_G=0$.

 Let now $F_1$ and $F_2$ be as in the statement.
We choose  $X\in C^\infty_c(\Om; \R^2)$  in such a way 
  that  
\beq\label{campo2}
X(x):=\biggl(\int_{h_{F_1}(\pi_G(x))}^{h_{F_2}(\pi_G(x))}\frac{ds}{\zeta(\pi_G(x)+s \nu_G(\pi_G(x)))}\biggr)\,\tilde X(x)\qquad \text{ for every $x\in {\mathcal N}_{\bar \eta/2}(\pa G)$}
\eeq
and we let $\Phi$ be the associated flow.
Notice that the integral appearing in \eqref{campo2} represents the time 
needed to go from $\pi_G(x)+h_{F_1}(\pi_G(x))\nu_G(\pi_G(x))$ to $\pi_G(x)+h_{F_2}(\pi_G(x))\nu_G(\pi_G(x))$ along the trajectory of the vector field $\tilde X$. 
Therefore the above definition ensures that the time needed to go from  $\pi_G(x)+h_{F_1}(\pi_G(x))\nu_G(\pi_G(x))$ to $\pi_G(x)+h_{F_2}(\pi_G(x))\nu_G(\pi_G(x))$ along the modified vector field $X$ is  
one. This is equivalent to saying that for all $x\in \pa G$ we have $\Phi_1(x+h_{F_1}(x)\nu_G(x))= x+h_{F_2}(x)\nu_G(x)$ and, in turn, $\Phi_1(F_1)=F_2$. Moreover, recalling the first equation in \eqref{campo1} and   using the fact that   the function 
$$
x\mapsto\int_{h_{F_1}(\pi_G(x))}^{h_{F_2}(\pi_G(x))}\frac{ds}{\zeta(\pi_G(x)+s \nu_G(\pi_G(x)))}
$$
is constant along the trajectories of $\tilde X$, we deduce from \eqref{campo2} that  the modified field $ X$ is divergence free in  ${\mathcal N}_{\bar \eta/2}(\pa G)$.
  Note that by \eqref{campo2} it also follows 
$$
\|X\|_{C^{2,\alpha}({\mathcal N}_{\bar \eta/2}(\pa G))}\leq C \|h_{F_1}-h_{F_2}\|_{C^{2,\alpha}(\pa G)}
$$
for a constant $C>0$ depending on $G$, and thus \eqref{vicinovicino} easily follows.

Observe now that for $i=1, \dots, m$  and for $\e_0>0$ small enough by \cite[equation (2.30)]{ChSte} we have
$$
\frac{d^2}{dt^2}|\Phi_t(F_{1,i})|=\int_{\Phi_t(\Gamma_{F_i, 1})}(\Div X)(X\cdot \nu_{ \Phi_t(F_{1,i})})=0\,\qquad\text{for all $t\in [-\e_0,1]$,}
$$
where we used the fact that  $ X$ is divergence free in  ${\mathcal N}_{\bar \eta/2}(\pa G)$.
Hence the function $t\mapsto |\Phi_t(F_{1,i})|$ is affine in $[-\e_0,1]$. Since by assumption $|\Phi_0(F_{1,i})|=|F_{1,i}|=|F_{2,i}|=|\Phi_1(F_{1,i})|$, it is in fact  constant. 
This concludes the proof of the lemma.
\end{proof}
We conclude this section by showing that in a sufficiently small $C^{2,\alpha}$-neighborhood of $G$ the stationary sets are isolated, once  we fix the areas enclosed by the connected components of the boundary. 
\begin{proposition}\label{stationary}
Assume that the reference set $G\subset\subset\Om$ is  a (smooth) strictly stable stationary set in the sense of Definition~\ref{def:stable}, fix $\alpha\in (0,1)$, and let $\sigma_0$ be the constant provided by Lemma~\ref{lemma:j2>0near}. There exists $\sigma_1\in (0, \sigma_0)$ with the following property:  Let $F_1$, $F_2\in  \mathfrak{h}^{2,\alpha}_{\sigma_1}(G)$ be stationary sets in the sense of Definition~\ref{def stationarity} and (with same notation of Lemma~\ref{lemma:connect}) assume that $|F_{1, i}|=|F_{2,i}|$ for $i=1,\dots, m$. Then $F_1=F_2$.
\end{proposition}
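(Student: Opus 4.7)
The plan is to interpolate between $F_1$ and $F_2$ via the admissible flow $(\Phi_t)_{t\in(-1,1)}$ provided by Lemma~\ref{lemma:connect}, set $F_t := \Phi_t(F_1)$ and $\psi_t := X\cdot\nu_{F_t}$, and analyse $g(t):=J(F_t)$. The stationarity of $F_1$ (via admissibility of $\Phi_t$ at $t=0$) and of $F_2$ (via admissibility of the time-reversed flow at $t=1$), combined with \eqref{eq:J'}, yield $g'(0)=g'(1)=0$. The aim is then to prove $g''(t)\geq \frac{m_0}{4}\|\psi_t\|_{\tilde H^1(\pa F_t)}^2$ for all $t\in[0,1]$; together with the boundary conditions and $g''\geq 0$, this forces $g'\equiv 0$, hence $g''\equiv 0$, hence $\psi_t\equiv 0$, so that the flow has vanishing normal velocity and $F_t$ is constant in $t$, giving $F_1=F_2$.

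Since the velocity field $X$ of Lemma~\ref{lemma:connect} is divergence-free in a neighbourhood of $\pa G$ (hence of each $\pa F_t$), I would apply the second-variation formula \eqref{eq:J''} at $F_t$ to obtain
\[
g''(t) \;=\; \pa^2 J(F_t)[\psi_t] + R(t), \qquad R(t) \,:=\, -\int_{\pa F_t}\bigl(g(\nu_{F_t})k_{F_t}-Q(E(u_{F_t}))\bigr)\,\Div_\tau(\psi_t X_\tau)\,d\Ha^1.
\]
From \eqref{vicinovicino}, every $F_t$ belongs to $\mathfrak{h}^{2,\alpha}_{C\sigma_1}(G)$, so choosing $\sigma_1\le\sigma_0/C$ the coercivity supplied by Lemma~\ref{lemma:j2>0near} yields $\pa^2 J(F_t)[\psi_t]\geq\frac{m_0}{2}\|\psi_t\|_{\tilde H^1(\pa F_t)}^2$.

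The heart of the proof is the estimate $|R(t)|\le \omega(\sigma_1)\,\|\psi_t\|_{\tilde H^1(\pa F_t)}^2$ with $\omega(\sigma_1)\to 0^+$ as $\sigma_1\to 0^+$. The first step is to subtract from the defect the constants $\lambda_i^G$ coming from the stationarity of $G$, which is legitimate because $\int_{\Gamma_{F_t,i}}\Div_\tau(\psi_t X_\tau)\,d\Ha^1=0$ on each closed component. Continuity of $F\mapsto g(\nu_F)k_F-Q(E(u_F))$, obtained via elliptic regularity applied to \eqref{uF}, then gives
\[
\|g(\nu_{F_t})k_{F_t}-Q(E(u_{F_t}))-\lambda_i^G\|_{L^\infty(\Gamma_{F_t,i})} \longrightarrow 0 \quad\text{as}\quad \sigma_1\to 0^+.
\]
The second and crucial step is to use the explicit form of $X$ in the proof of Lemma~\ref{lemma:connect}: since $X$ is a scalar multiple of the field $\tilde X=\zeta\,\nabla d_G$ which points along $\nu_G\circ\pi_G$, the transformation formulas \eqref{formula tangent}--\eqref{formula normal} yield the pointwise identity $X\cdot\tau_{F_t}=\beta_t\psi_t$ on $\pa F_t$, with $\beta_t = \pa_\sigma h_{F_t}/(1+h_{F_t}k_G)$ (pulled back to $\pa G$) satisfying $\|\beta_t\|_{C^1(\pa F_t)}\le C\sigma_1$. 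Hence $\Div_\tau(\psi_t X_\tau)=\pa_\sigma(\beta_t\psi_t^2)=(\pa_\sigma\beta_t)\psi_t^2+2\beta_t\psi_t\pa_\sigma\psi_t$, and Cauchy--Schwarz produces $\|\Div_\tau(\psi_t X_\tau)\|_{L^1(\pa F_t)}\le C\sigma_1\,\|\psi_t\|_{\tilde H^1(\pa F_t)}^2$, which combined with the oscillation bound yields the desired control on $R(t)$.

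The main obstacle is precisely this last residual estimate: $R(t)$ involves the full vector field $X$, not only its normal trace $\psi_t$, and a brute-force bound based only on $\|X\|_{C^1}\le C\sigma_1$ would leave $|R(t)|$ linear in $\|\psi_t\|_{\tilde H^1}$, which cannot be absorbed into the quadratic coercivity. The factorization $X\cdot\tau_{F_t}=\beta_t\psi_t$ with $\|\beta_t\|_{C^1}=O(\sigma_1)$, which rests crucially on the specific alignment of the interpolating field $X$ with $\nu_G$ built into Lemma~\ref{lemma:connect}, is what makes $R(t)$ genuinely quadratic in $\psi_t$ with an arbitrarily small coefficient and closes the argument.
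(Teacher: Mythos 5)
Your proof is correct and follows essentially the same route as the paper: interpolate between $F_1$ and $F_2$ via the divergence-free, normal-aligned flow from Lemma~\ref{lemma:connect}, combine the second-variation formula \eqref{eq:J''} with the coercivity of Lemma~\ref{lemma:j2>0near}, exploit the stationarity to subtract the constants $\lambda_i$, and use the explicit structure of $X$ to make the residual term quadratic in $\psi_t$. The only refinement is in the residual estimate: you extract the factorization $X\cdot\tau_{F_t}=\beta_t\psi_t$ with $\|\beta_t\|_{C^1}=O(\sigma_1)$ and thereby get $\|\Div_\tau(\psi_t X_\tau)\|_{L^1}=O(\sigma_1)\|\psi_t\|^2_{\tilde H^1}$, while the paper settles for the coarser bound $\|\Div_\tau(\psi_t X_\tau)\|_{L^1}\le C\|\psi_t\|^2_{\tilde H^1}$ (its \eqref{cista3}) and makes the residual small via the oscillation estimate \eqref{cista2}; your sharper bound renders the oscillation step logically redundant, though including it is a harmless safety margin.
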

\begin{proof}
 We start by observing that for any $\eta\in (0,\sigma_0)$ we may choose $\sigma_1>0$ so small that for any pair $F_1$, $F_2\in \mathfrak{h}^{2,\alpha}_{\sigma_1}(G)$ the flow $\Phi_t$ provided by Lemma~\ref{lemma:connect} satisfies
\beq\label{cista}
\Phi_t(F_1)\in \mathfrak{h}^{2,\alpha}_{\eta}(G)\subseteq \mathfrak{h}^{2,\alpha}_{\sigma_0}(G)\qquad\text{for all }t\in [0,1],
\eeq
Notice that this is possible thanks to \eqref{vicinovicino}.  

Recall that by Remark~\ref{rm:station} there exist constants   $\lambda_i$ such that  $g(\nu_G)k_G - Q(E(u_G))=\lambda_i$ on $\Gamma_{G,i}$ for $i=1, \dots, m$. In what follows,  the subscript $t$ stands for the subscript $\Phi_t(F_1)$, where $\Phi_t$ is  the flow of Lemma~\ref{lemma:connect}. Fix $\e>0$ and observe that by taking $\eta$ in \eqref{cista} and, in turn, $\sigma_1$ smaller if needed, we may ensure that 
\beq\label{cista1}
\sup_{t\in [0,1]}\|\nu_t-\nu_G\circ\pi_G\|_{C^1(\Phi_t(\pa F))}\leq \e
\eeq
and 
\beq\label{cista2}
\sup_{i=1,\dots, m}\sup_{t\in [0,1]}\|g(\nu_t) k_t - Q(E(u_t))-\lambda_i\|_{C^0(\Phi_t(\Gamma_{F,i}))}\leq \e.
\eeq
The latter condition can be guaranteed thanks also  to the elliptic estimates proved later in Lemma~\ref{elastiset}. Let $X$ be the velocity field of $\Phi_t$ and recall that by the explicit construction given in the proof of Lemma~\ref{lemma:connect} we have
$X=[X\cdot (\nu_G\circ\pi_G)]\nu_G\circ \pi_G$ in ${\mathcal N}_{\bar \eta/2}(\pa G)$. Thus, writing  
$X=[X\cdot (\nu_G\circ\pi_G-\nu_t)]\nu_G\circ \pi_G+ (X\cdot \nu_t)\nu_G\circ \pi_G$ on 
$\Phi_t(\pa F)$ and using 
\eqref{cista1} with $\e$ (and in turn $\sigma_1$) sufficiently small,  we  find that for all $t\in [0,1]$
\beq\label{cista3}
|X|\leq 2|X\cdot \nu_t|\quad\text{ and }\quad 
|\pa_\sigma X|\leq C\bigl(|X\cdot \nu_t|+|\pa_\sigma(X\cdot \nu_t)|)\bigr)\quad\text{ on }\Phi_t(\pa F),
\eeq
for some constant $C>0$ depending only on $G$.

Let now $F_1$ and $F_2$ be as in the statement of the proposition and $\Phi_t$ as above. Recalling \eqref{eq:J''} and \eqref{eq:pa2J}, for every $s\in[0,1]$
we may write
\begin{align}
\frac{d^2}{dt^2}J(\Phi_t(F))_{\bigl|_{t=s}}&=\pa^2J(\Phi_s(F_1))[X\cdot \nu_s]\nonumber\\
&-\int_{\Phi_s(\pa F_1)}\bigl[g(\nu_s)k_s-Q(E(u_s))\bigr]\Div_\tau\bigl( X_\tau(X\cdot \nu_s)\bigr)\, d\Ha^1\nonumber\\
&=\pa^2J(\Phi_s(F_1))[X\cdot \nu_s]\nonumber\\
&-\sum_{i=1}^m\int_{\Phi_s(\Gamma_{F_1, i})}\bigl[g(\nu_s)k_s-Q(E(u_s))-\lambda_i\bigr]\Div_\tau\bigl( X_\tau(X\cdot \nu_s)\bigr)\, d\Ha^1. \label{catenona}
\end{align}
Recall that $(\Phi_t)$ is an admissible flow and thus  $X\cdot\nu_s\in \tilde H^1(\Phi_s(\pa F_1))$ for every $s\in[0,1]$ due to Remark~\ref{rm:mn}. In turn, by \eqref{cista} and 
Lemma~\ref{lemma:j2>0near} we deduce that 
\[
\pa^2J(\Phi_s(F_1))[X\cdot \nu_s]\geq\frac{m_0}{2}\|X\cdot\nu_s\|^2_{\tilde H^1(\Phi_s(\pa F_1))}.
\] 
Note also that by \eqref{cista3} it is easily checked that
\[
\|\Div_\tau\bigl( X_\tau(X\cdot \nu_s)\bigr)\|_{L^1(\Phi_s(F_1))}\leq C\|X\cdot\nu_s\|^2_{\tilde H^1(\Phi_s(\pa F_1))}.
\]
Thus, collecting all the above observations and recalling also \eqref{cista2}, we deduce from \eqref{catenona} that for every $s\in[0,1]$
\[
\frac{d^2}{dt^2}J(\Phi_t(F))_{\bigl|_{t=s}}\geq \Bigl(\frac{m_0}{2}-Cm\e\Bigr)\|X\cdot\nu_s\|^2_{\tilde H^1(\Phi_s(\pa F_1))}\geq \frac{m_0}{4}\|X\cdot\nu_s\|^2_{\tilde H^1(\Phi_s(\pa F_1))},
\]
where the last inequality holds true provided we choose in \eqref{cista2} a sufficiently small $\e$ (and $\sigma_1$). Since on the other hand by the stationarity of $F_1$ and $F_2$  we have 
\[
\frac{d}{dt}J(\Phi_t(F))_{\bigl|_{t=0}}=\frac{d}{dt}J(\Phi_t(F))_{\bigl|_{t=1}}=0,
\]
 we infer that $\frac{d^2}{dt^2}J(\Phi_t(F))_{\bigl|_{t=s}}=0$ and in turn $X\cdot\nu_s=0$  on $\Phi_s(\pa F_1)$ for all 
$s\in[0,1]$. This means that $s\mapsto \Phi_s(F_1)$ is constant in $[0,1]$ and, in particular, $F_1=F_2$.  
\end{proof}
\section{Short-time existence and regularity}\label{sec:existence}

We are interested in the evolution law 
\begin{equation} \label{flow}
V_t = \pa_{\sigma \sigma}\big(g(\nu_t)k_t - Q(E(u_t))\big) \quad\text{on }\pa F_t,
\end{equation}
where $V_t$ stands for the outer normal velocity of $\pa F_t$, and 
$u_t\in H^1(\Om\setminus F_t; \R^2)$ is the unique solution of  
\beq\label{ut}
\begin{cases}
\Div \C E(u_t)=0 & \text{in }\Om\setminus F_t,\\
\C E(u_t)[\nu_t]=0 & \text{on }\pa F_t\cup (\pa \Om\setminus \pa_D\Om),\\
u_t=w_0 &\text {on }\pa_D\Om.
\end{cases}
\eeq

\begin{remark}\label{rm:accameno1}
We remark that \eqref{flow} can be regarded as the gradient flow of \eqref{energy} with respect to a suitable Riemannian structure of $H^{-1}$-type.
To illustrate this fact, consider the dual $H^{-1}_t$  of $H^1_t:=H^1(\pa F(t)$) endowed with the scalar product  
\begin{multline}\label{hmeno1scalar}
\langle \psi_1, \psi_2\rangle_{H^{-1}_t}:=\int_{\pa F(t)}\pa_\sigma v_{\psi_1}\pa_\sigma v_{\psi_2}\,d\H^1=-\langle \pa_{\sigma\sigma}v_{\psi_2}, 
v_{\psi_1}\rangle_{H^{-1}_t\times H^{1}_t}\\ =
\langle \psi_2, 
v_{\psi_1}\rangle_{H^{-1}_t\times H^{1}_t}=\langle \psi_1, 
v_{\psi_2}\rangle_{H^{-1}_t\times H^{1}_t},
\end{multline}
where  $\pa_\sigma$ denotes the tangential derivative on $\pa F(t)$ and for any $\psi\in H^{-1}_t$ the function $v_\psi$ is the unique function  in  $H^1_t$ satisfying  
\beq\label{vpsiintro}
\begin{cases}
-\pa_{\sigma\sigma}v_\psi= \psi & \text{on $\pa F(t)$}\,, \vspace{5pt}\\
\displaystyle \int_{\pa F(t)} v_\psi\, d\mathcal{H}^1=0\,.
\end{cases}
\eeq
As already recalled, the first variation $\pa J(F(t))$ satisfies
$$
\pa J(F(t))[\psi]=\int_{\pa F(t)} \bigl(k_{\vphi, t}-Q(E(u_{F(t)}))\bigr) \psi\, d\H^1
$$
for all $\psi\in C^\infty(\pa F(t))$.
Thus, recalling also \eqref{flow},  \eqref{hmeno1scalar} and \eqref{vpsiintro}, we  have
\begin{align*}
\langle V_t, \psi \rangle_{H^{-1}(\pa F(t))}&=\int_{\pa F(t)}V_t\,  v_{\psi}\, d\H^1
=\int_{\pa F(t)} \pa _{\sigma\sigma}\bigl(k_{\vphi, t}-Q(E(u_{F(t)}))\bigr) v_{\psi}\, d\H^1\\
&= \int_{\pa F(t)} \bigl(k_{\vphi, t}-Q(E(u_{F(t)}))\bigr) \pa_{\sigma\sigma}v_{\psi}\, d\H^1=-\pa J(F(t))[\psi].
\end{align*}
Hence, time by time the normal velocity $V_t$  is the element of $H^{-1}_t$ that represents the action of  $-\pa J(F(t))$ with respect to the scalar product defined in \eqref{hmeno1scalar}. This formally establishes the $H^{-1}$-gradient flow structure of \eqref{flow}.
\end{remark}

The following theorem establishes the  short-time existence of a unique weak solution of  \eqref{flow}. 
In Theorem~\ref{Cinfty} below we will show that  the weak solution is in fact smooth and therefore classical.

\begin{theorem}\label{th:existence}
Let $F_0\subset\subset\Om$ be such that  
\beq\label{f0}
\partial F_0 = \{ x + h_0(x) \nu_{G}(x) \mid h_0 \in  H^3(\partial G)  \}.
\eeq
There exist $\delta$ and $T>0$, which depend on  the $H^3$-norm of $h_0$, such that if $\|h_0\|_{L^2(\pa G)} \leq \delta$ then  the flow \eqref{flow} admits a unique local-in-time weak solution 
$(F_t)_{t \in (0,T)}$ with an initial set $F_0$. 
More precisely, we have $\partial F_t = \{ x + h_t(x) \nu_{G}(x) \}$, where   
$(h_t)_t\in  H^1(0,T; H^1(\pa G))\cap L^2(0, T; H^3(\pa G))$.   
Moreover $(h_t)_t \in C^0([0, T); C^{2,\alpha}(\pa G))$  for all $\alpha\in (0, \frac12)$  and 
$\bigl([g(\nu_t)k_t-Q(E(u_t))]\circ\pi_{F_t}^{-1}\bigr)_t \in L^2(0,T; H^3(\pa G))$.
\end{theorem}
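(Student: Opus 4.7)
The plan is to implement the fixed-point argument anticipated in the introduction, thinking of the elastic term $f_t := Q(E(u_{F_t}))\circ\pi_{F_t}^{-1}$ as a prescribed forcing on $\pa G$ and recovering it a posteriori as a fixed point. First I would rewrite the moving boundary as $\pa F_t = \{x + h_t(x)\nu_G(x) : x \in \pa G\}$ and, using the transformation formulas \eqref{formula tangent}--\eqref{curvature formula}, convert the normal-velocity law \eqref{flow} into a quasilinear fourth-order parabolic equation for $h_t$ of the schematic form
\[
\frac{J_{h_t}}{1 + h_t k_G}\,\pa_t h_t \;=\; \pa_{\sigma\sigma}\bigl(\mathcal{K}_\vphi(h_t) - f_t\bigr)\qquad\text{on }\pa G,
\]
where $\mathcal{K}_\vphi(h)$ is a second-order operator in $h$ whose principal part is $g(\nu_G)\pa_{\sigma\sigma}h/J_{h}^{3}$. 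The ellipticity assumption \eqref{ellipticity} guarantees that the linearization at $h=0$ is uniformly parabolic of order four.

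Fix a small ball in $X_T := L^2(0,T;H^1(\pa G))$; given $f$ in this ball, I would solve the decoupled problem $V_t = \pa_{\sigma\sigma}(g(\nu_t)k_t - f)$ with $h_t|_{t=0}=h_0$ by Galerkin approximation. The analytic core is an energy identity for
\[
\mathcal E(t) := \int_{\pa F_t}\bigl(\pa_\sigma(k_{\vphi,t}-f)\bigr)^2\, d\H^1,
\]
whose time derivative, after several integrations by parts, is to be shown to control $\|h_t\|_{H^3(\pa G)}^2$ modulo $\|f_t\|_{H^1(\pa G)}^2$ plus terms of lower order in $h_t$ (this is the content anticipated as Lemma~\ref{monotonisuus lemma}). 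The smallness of $\|h_0\|_{L^2}$ and the upper bound on $\|h_0\|_{H^3}$ keep the geometric coefficients close to those of $G$ so that the leading quadratic form $\int_{\pa F_t} g(\nu_t)(\pa_\sigma\mu_t)^2\,d\H^1$ stays coercive. Combined with the interpolation inequality of the Appendix, this yields
\[
h \in L^2(0,T;H^3(\pa G))\cap H^1(0,T;H^1(\pa G))\cap C^0\bigl([0,T);C^{2,\alpha}(\pa G)\bigr)
\]
for every $\alpha<\tfrac12$, with estimates depending only on $\|h_0\|_{H^3}$ and $\|f\|_{X_T}$.

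With $F_t$ in this $C^{2,\alpha}$-class, standard elliptic regularity for \eqref{ut} (as will be recorded in Lemma~\ref{elastiset}) gives $u_{F_t}\in C^{2,\alpha}(\overline{\Om\setminus F_t})$ and hence a uniform $H^1(\pa G)$ bound for $Q(E(u_{F_t}))\circ\pi_{F_t}^{-1}$. Thus the nonlinear map $\Psi: f\mapsto Q(E(u_{F(f)}))\circ\pi_{F(f)}^{-1}$ sends a small ball of $X_T$ into itself. To get contraction I would take two forcings $f_1,f_2$ and let $h^{(1)},h^{(2)}$ be the associated solutions: the difference satisfies a linear fourth-order parabolic equation driven by $f_1-f_2$, and the same energy method applied to the difference yields
\[
\sup_{t\in[0,T]}\|h^{(1)}_t-h^{(2)}_t\|_{H^1(\pa G)} + \|h^{(1)}-h^{(2)}\|_{L^2(0,T;H^3)} \;\leq\; C\sqrt{T}\,\|f_1-f_2\|_{X_T}.
\]
Lipschitz dependence of the elastic solution on the domain then transfers this into $\|\Psi(f_1)-\Psi(f_2)\|_{X_T}\leq C\sqrt T\,\|f_1-f_2\|_{X_T}$. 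Choosing $T$ sufficiently small yields a unique fixed point, which is the sought weak solution, and the same difference estimate gives uniqueness within the full equation \eqref{flow}.

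The main obstacle, and the true analytic heart of the proof, is the energy estimate for $\mathcal{E}(t)$. The absence of curvature regularization means only four derivatives of smoothing in $L^2$-time are available — barely enough to accommodate the quasilinear structure of $\mathcal K_\vphi$ and the fact that $f$ enters under a second tangential derivative. Controlling the many remainder terms produced by the motion of $\pa F_t$, particularly the transport terms involving $\dot\nu_t$ and $\dot J_{h_t}$, requires absorbing them against the leading coercive term $\int_{\pa F_t}g(\nu_t)(\pa_\sigma\mu_t)^2\,d\H^1$; it is precisely to keep this absorption valid that one needs smallness of $\|h_0\|_{L^2(\pa G)}$ in addition to a mere $H^3$ bound.
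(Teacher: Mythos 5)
Your overall strategy matches the paper's: treat the elastic contribution as a prescribed forcing on $\pa G$, solve the resulting forced surface diffusion equation, and close the loop by a fixed-point argument. The energy quantity $\int_{\pa F_t}(\pa_\sigma R_t)^2$ is indeed the analytic engine, and the $L^2$-smallness of $h_0$ serves the purpose you indicate. However, there is a genuine gap in the choice of fixed-point space, and a subsidiary imprecision in the contraction estimate.

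The space $X_T = L^2(0,T;H^1(\pa G))$ is too weak to make the argument close. Inspect the energy estimate \eqref{stimaf2}: the constant $C_1$ depends on $\sup_{(0,T)}\|f_t\|_{C^{1,\alpha}(\pa G)}$, and the right-hand side contains $C_1\|\dot f_t\|^2_{H^{-1/2}(\pa G)}$. A forcing that merely lies in a small ball of $L^2(0,T;H^1)$ gives no uniform-in-time $C^{1,\alpha}$ bound and no control on $\dot f_t$, so the differential inequality for $\int_{\pa F_t}(\pa_\sigma R_t)^2$ cannot be integrated and the solution can leave the $C^{2,\alpha}$ class you rely on for the coercivity of the leading term. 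The paper fixes this by working with the class
\[
\mathcal{C}=\Bigl\{f:\ \sup_{(0,T)}\|f_t\|_{C^{1,\alpha}(\pa G)}\leq M_0,\ \int_0^{T}\|\dot f_t\|^2_{H^{-\frac12}(\pa G)}\,dt\leq M_0\Bigr\},
\]
and establishing that the map $\mathcal T f=-Q(E(u_t))\circ\pi_{F_t}^{-1}$ sends $\mathcal C$ into itself (Lemma~\ref{regularity u}); these side constraints are invariant but \emph{not} contracted. The metric in which contraction holds is the much weaker $L^2(0,T;L^2(\pa G))$ metric (Step~1 of the paper's proof). Your claim that $\sup_t\|w_t\|_{H^1}+\|w\|_{L^2(0,T;H^3)}\leq C\sqrt{T}\,\|f_1-f_2\|_{X_T}$ is not what the parabolic estimate delivers: the small-in-$T$ factor only appears at the $L^2(L^2)$ level via the Gronwall argument (see \eqref{exist pt2}--\eqref{exist pt3}), while $\|w\|_{L^2(0,T;H^2)}$ is bounded but not small. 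The paper then recovers the contraction of $\mathcal T$ by interpolating $\|h_1-h_2\|_{C^{1,\alpha}}$ between the contracted $L^2(L^2)$ norm and the merely bounded $L^2(H^2)$ norm, using an auxiliary small parameter $\eps$. Finally, the Galerkin construction of the forced flow you mention is glossed over; the paper instead invokes a separate short-time existence theorem (Theorem~\ref{th:probabile}, adapted from the Hele-Shaw reference) for the forced equation with smooth data, then passes to $H^3$ initial data and $f\in\mathcal C$ by approximation. That step, while perhaps doable by Galerkin, is not free and needs its own proof.
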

Note that when the initial set $F_0$ is smooth we may take $G=F_0$. 
We give the proof of Theorem \ref{th:existence} at the end of the section. We will  first prove a sequence of lemmas needed for the proof of the short-time existence result.

We will need some preliminary results.  Our proof of Theorem \ref{th:existence} is based on a fixed point argument. To this aim, for a given smooth function $f : \pa G \times (0,T) \to \R$,  we consider the forced surface diffusion flow given by
\beq\label{eqf}
V_t = \pa_{\sigma \sigma}\big(g(\nu_t)k_t + f_t \circ \pi_G\big)\quad\text{on }\pa F_t
\eeq
with initial datum $F_0$  of class $H^3$, where we denoted $f_t:=f(\cdot, t)$. To simplify the notation we will denote 
\beq\label{Rt}
R_t:=g(\nu_t)k_t + f_t \circ \pi_G \quad\text{on }\pa F_t.
\eeq

The following  monotone quantities are the starting point of our analysis. 
\begin{lemma} \label{monotonisuus lemma}
Let $F_0$ be a set with smooth boundary, $f  \in C^{\infty}(\pa G\times [0,\infty))$, and let  $(F_t)_{t \in (0,T)}$ be a smooth flow  satisfying \eqref{eqf}, with initial datum $F_0$. Then we have
\beq\label{stimaf3}
\frac{d}{dt}\int_{F_t\Delta G} \text{dist}(x, \pa G)\, dx = \int_{\pa F_t} d_{G}\, \pa_{\sigma\sigma}R_{t}\, d\Ha^1 \leq  P(F_t)^{\frac{1}{2}}  \left( \int_{\pa F_t}(\pa_\sigma R_t)^2 \, d\Ha^1\right)^{\frac{1}{2}},
\eeq
whenever the flow \eqref{eqf} exists.  Moreover there exists $C_1$, which depends on $\sup_{(0,T)} ||h_t||_{C^{2,\alpha}}$ and $\sup_{(0,T)} ||f_t||_{C^{1,\alpha}}$,  such that 
\begin{multline}\label{stimaf2}
\frac{d}{dt}\int_{\pa F_t}(\pa_\sigma R_t)^2\, d\Ha^1+
c_0\int_{\pa F_t} (\pa_{\sigma\sigma\sigma}R_t)^2\, d\Ha^1 \\
\leq  C_1\| \dot f_t  \|_{H^{-\frac12}(\pa G)}^2 + C_1\left( 1+ \int_{\pa F_t}(\pa_\sigma R_t)^2\, d\Ha^1\right)^q\,,
\end{multline}
for some $q>1$. 
\end{lemma}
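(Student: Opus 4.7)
For the first identity in \eqref{stimaf3}: since $F_t\in\mathfrak{h}^{2,\alpha}_M(G)$ lies in $\mathcal{N}_{\bar\eta/2}(\pa G)$, the signed distance $d_G$ (negative inside $G$, positive outside) satisfies
\[
\int_{F_t\Delta G}\dist(x,\pa G)\,dx=\int_\Om(\chi_{F_t}-\chi_G)\,d_G\,dx.
\]
Differentiating in $t$ and using the standard formula $\frac{d}{dt}\int\chi_{F_t}\psi\,dx=\int_{\pa F_t}\psi V_t\,d\H^1$ together with the flow equation $V_t=\pa_{\sigma\sigma}R_t$ yields the claimed equality. The inequality in \eqref{stimaf3} then follows from tangential integration by parts on the closed curve $\pa F_t$ (no boundary terms) and Cauchy--Schwarz, using $|\pa_\sigma d_G|\le|\nabla d_G|=1$.

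For \eqref{stimaf2} I would start from the transport identity
\[
\frac{d}{dt}\int_{\pa F_t}u_t^2\,d\H^1=\int_{\pa F_t}\bigl(2u_t\,\pa_t^\bullet u_t-k_tV_tu_t^2\bigr)\,d\H^1,
\]
applied with $u_t=\pa_\sigma R_t$, where $\pa_t^\bullet$ is the material derivative along normal trajectories. Combining the standard commutator between $\pa_t^\bullet$ and $\pa_\sigma$ with the classical evolution formula for the anisotropic curvature under normal flow,
\[
\pa_t^\bullet k_{\vphi,t}=-\pa_\sigma\bigl(g(\nu_t)\pa_\sigma V_t\bigr)+\text{(lower order)},
\]
allows one to write $\pa_t^\bullet R_t=-g(\nu_t)\pa_\sigma^4 R_t+\dot f_t\circ\pi_G+\mathcal{R}_t$, where $\mathcal{R}_t$ collects lower-order terms involving at most three tangential derivatives of $R_t$ with smooth coefficients depending on $\nu_t,k_t,h_t$ and (spatial derivatives of) $f_t$, whose $C^{1,\alpha}$ norms are controlled by $\|h_t\|_{C^{2,\alpha}}+\|f_t\|_{C^{1,\alpha}}$. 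Two tangential integrations by parts turn the principal contribution $2\int\pa_\sigma R_t\,\pa_\sigma(-g(\nu_t)\pa_\sigma^4 R_t)\,d\H^1$ into $-2\int g(\nu_t)(\pa_{\sigma\sigma\sigma}R_t)^2\,d\H^1$, and the ellipticity $g(\nu)\ge c_0$ lets me extract $-c_0\int(\pa_{\sigma\sigma\sigma}R_t)^2$ while keeping a coercive reserve large enough to absorb the perturbations below.

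The remaining terms split into two groups. All algebraic combinations coming from $\mathcal{R}_t$, from the commutators, and from $-k_tV_tu_t^2$ are of the form ``smooth coefficient times $\pa_\sigma^j R_t$'' with $j\le 2$, and are estimated via Sobolev embedding on the closed curve $\pa F_t$ together with the Gagliardo--Nirenberg interpolation
\[
\|\pa_\sigma^j R_t\|_{L^p(\pa F_t)}\le C\|\pa_{\sigma\sigma\sigma}R_t\|_{L^2}^{\theta}\|\pa_\sigma R_t\|_{L^2}^{1-\theta}+C\|\pa_\sigma R_t\|_{L^2},
\]
followed by Young's inequality, producing contributions dominated by $\eps\int(\pa_{\sigma\sigma\sigma}R_t)^2+C_\eps(1+\int(\pa_\sigma R_t)^2)^q$ for some $q>1$. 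The explicit $\dot f_t$ contribution $2\int_{\pa F_t}\pa_\sigma R_t\,\pa_\sigma(\dot f_t\circ\pi_G)\,d\H^1$ is handled by integrating by parts once and pulling back to $\pa G$ via $\pi_G$: the $H^{1/2}$--$H^{-1/2}$ duality on $\pa G$, combined with the interpolation $\|\pa_{\sigma\sigma}R_t\|_{H^{1/2}(\pa F_t)}\le\eps\|\pa_{\sigma\sigma\sigma}R_t\|_{L^2}+C_\eps\|\pa_\sigma R_t\|_{L^2}$, yields the bound $\eps\int(\pa_{\sigma\sigma\sigma}R_t)^2+C_\eps\|\dot f_t\|_{H^{-1/2}(\pa G)}^2$. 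Choosing $\eps$ small enough finishes the bound.

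The main obstacle is the bookkeeping in the second estimate: identifying the principal parabolic term, tracking the many lower-order commutator and geometric remainders produced by the non-constancy of $g(\nu_t)$ and the reparametrization of the evolving curve, verifying that each one is indeed controlled polynomially by $\int(\pa_\sigma R_t)^2$ (so that an explicit exponent $q>1$ emerges), and, most delicately, preserving the sharp coercivity constant $c_0$ after absorbing all the anisotropic and geometric perturbations.
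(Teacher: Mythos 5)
Your proposal follows essentially the same route as the paper: the first identity is obtained exactly as in the text (write the $\Delta$-integral as $\int_{F_t}d_G-\int_G d_G$, differentiate along the flow, integrate by parts tangentially, Cauchy--Schwarz), and for \eqref{stimaf2} both arguments differentiate $\int(\pa_\sigma R_t)^2$ via a transport/pullback formula, extract $-c_0\int(\pa_{\sigma\sigma\sigma}R_t)^2$ from the leading anisotropic term in $\dot R_t$, treat the $\dot f_t$ contribution by $H^{1/2}$--$H^{-1/2}$ duality plus interpolation, and close with Gagliardo--Nirenberg and Young. Two cosmetic remarks: the sign in your transport identity should be $+k_tV_tu_t^2$ (harmless, since only absolute values are used), and the paper keeps $\dot k_{\vphi,t}=-\pa_\sigma(g(\nu_t)\pa_{\sigma\sigma\sigma}R_t)$ in divergence form so that one integration by parts immediately yields $-\int g(\pa_{\sigma\sigma\sigma}R_t)^2$ with all remainders of order $\le 2$, whereas your non-divergence expansion $\pa_t^\bullet R_t=-g\pa_\sigma^4R_t+\dots$ additionally produces a $\pa_{\sigma\sigma}R_t\,\pa_{\sigma\sigma\sigma}R_t$ term (so your ``$j\le2$'' bookkeeping is slightly off), but that term is still absorbable by the same $\eps$-Young/interpolation argument.
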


\begin{proof}
 Let   $X_t$ be the  velocity field associated with the flow. In particular we have that 
\[
X_t \cdot \nu_{t} = \pa_{\sigma \sigma}R_t. 
\]
For  $t \in (0,T)$  and $s>0$  $\Phi_s: \pa F_t  \to \pa F_{t+s}, \Phi_s = \pi_{F_{t+s}}^{-1} \circ \pi_{F_t}$ are admissible  diffeomorphisms and by the above equality it holds $\frac{\pa}{\pa s} \Phi_s \big|_{s=0} = X_t$.

 As mentioned in the previous section we can extend  $\nu_t, \tau_t$ and $k_t$ by means of the signed distance function $d_{F_t}$ in a tubular neighborhood of $\pa F_t$. This extension yields the following identities (see for instance \cite[Lemma 4.2]{Bo}):
\beq
\label{app1}
\pa_{\nu_t} k_{\varphi, t} = - k_t^2 g(\nu_{t}),
\eeq
\beq
\label{app2}
\dot{\nu}_t =   - \pa_\sigma (X_t \cdot \nu_t) \, \tau_t =  - \pa_{\sigma \sigma \sigma} R_t\, \tau_t
\eeq
and
\beq
\label{app3}
\dot{k}_{\varphi, t}  = \Div (D^2 \varphi(\nu_t)  \,\dot{\nu}_t ) =  - \pa_\sigma (g(\nu_{t})  \pa_{\sigma \sigma \sigma}R_t). 
\eeq

Note that 
\[
\int_{F_t \Delta G} \text{dist}(x, \pa G)\, dx = \int_{F_t} d_G\, dx  - \int_{G}d_G\, dx .
\]
Thus, 
\[
\begin{split}
\frac{d}{dt} \int_{F_t \Delta G} \text{dist}(x, \pa G)\, dx  &= \frac{d}{dt}\int_{F_t} d_G\, dx = \int_{F_t} \Div( d_G X_t) \, dx \\
&=   \int_{\pa F_t}  d_G (X_t\cdot \nu_t ) \, d \Ha^1  =  \int_{\pa F_t}    d_G \, \pa_{\sigma \sigma}R_t\, d \Ha^1\\
&= - \int_{\pa F_t}    \pa_{\sigma}d_G   \, \pa_{\sigma}R_t\, d \Ha^1 \leq P(F_t)^{\frac12} \left(   \int_{\pa F_t}  \, (\pa_{\sigma}R_{t})^2 \, d \Ha^1 \right)^{\frac12}.
\end{split}
\]
This proves \eqref{stimaf3}.
Concerning \eqref{stimaf2} we begin by  calculating  
\beq
\label{app4}
\begin{split}
\frac{d}{dt} &\left(\frac{1}{2} \int_{\pa F_{t}} (\pa_{\sigma}R_{t})^2  \, d \Ha^1\right) =  \frac{\pa}{\pa s} \left(\frac{1}{2} \int_{\pa F_{t}} \big( (\nabla R_{t+s})(\Phi_s(x)) \cdot \tau_{t+s}( \Phi_s(x)) \big)^2  \, J_{\tau} \Phi_s \, d \Ha^1\right) \big|_{s=0}\\
&= \frac{1}{2} \int_{\pa F_{t}} (\pa_{\sigma}R_{t})^2  \Div_\tau X_t \, d \Ha^1 + \int_{\pa F_{t}} \pa_{\sigma}R_{t} \, \frac{\pa}{\pa s} \Big(\nabla R_{t+s}(\Phi_s(x)) \cdot  \tau_{t+s}( \Phi_s(x))   \Big) \big|_{s=0} \, d \Ha^1.
\end{split}
\eeq
Using our notation we write the last term as 
\[
\begin{split}
\frac{\pa}{\pa s} & \Big(\nabla R_{t+s}(\Phi_s(x)) \cdot  \tau_{t+s}( \Phi_s(x))   \Big) \big|_{s=0}\\
&= \pa_{\sigma} \dot{R_t} +  (\nabla^2R_t \, X_t)\cdot \tau_t    + \nabla R_t \cdot  \dot{\tau_t} + \nabla R_t\cdot( \nabla \tau_t X_t).
\end{split}
\]
We write $X_{t, \tau} := X_t \cdot \tau_t  $.
 Note that by \eqref{app2} we have that $\dot{\tau}_t = \mathcal{R}\dot{\nu}_t =  \pa_{\sigma \sigma \sigma}R_t \, \nu_t$. Moreover it holds 
$D\tau_t \, \nu_t = 0.$ Therefore we get
\[
\begin{split}
\frac{\pa}{\pa s} & \Big(\nabla R_{t+s}(\Phi_s(x)) \cdot  \tau_{t+s}( \Phi_s(x))   \Big) \big|_{s=0} \\
&= \pa_{\sigma} \dot{R_t} +  \pa_{\sigma \sigma \sigma} R_t\,  \pa_{\nu_t} R_t + \pa_{\sigma \sigma}R_t   (\nabla^2R_t \, \nu_t)\cdot \tau_t  +    (\nabla^2R_t \, \tau_t)\cdot \tau_t  \, X _{t,\tau}    + \nabla R_t \cdot(\nabla  \tau_t \, \tau_t)\, X _{t,\tau}.
\end{split}
\]
Therefore,  using the fact that $\pa_\sigma (\pa_{\nu_t} R_t)  = k_t \, \pa_{\sigma}R_t +  
(\nabla^2R_t \, \nu_t)\cdot \tau_t $ and integrating by parts, \eqref{app4} can be written as
\[
\begin{split}
\frac{d}{dt} &\left(\frac{1}{2} \int_{\pa F_{t}} (\pa_{\sigma}R_{t})^2  \, d \Ha^1\right) =   \int_{\pa F_{t}}  \frac{1}{2}(\pa_{\sigma}R_{t})^2  \Div_\tau X_t + \pa_{\sigma}R_{t}  \, \pa_\sigma\dot{R_t} +  \pa_{\sigma}R_{t}  \, \pa_{\sigma \sigma \sigma}R_t\,  \pa_{\nu_t} R_t  \, d \Ha^1\\
&+\int_{\pa F_{t}}\pa_{\sigma}R_{t}  \,  \pa_{\sigma \sigma}R_t  (\nabla^2R_t \, \nu_t)\cdot \tau_t   +\pa_{\sigma}R_{t} \, X _{t,\tau} \,   (\nabla^2R_t \, \tau_t)\cdot \tau_t   + \pa_{\sigma}R_{t} \, \nabla R_t\cdot (\nabla \tau_t\, \tau_t) \, X _{t,\tau}  \, d \Ha^1\\
&= \int_{\pa F_{t}} \frac{1}{2}(\pa_{\sigma}R_{t})^2  \Div_\tau X_t   -\pa_{\sigma \sigma}R_t \dot{R_t}  -  (\pa_{\sigma \sigma}R_t)^2  \pa_{\nu_t} R_t  - k_t \, (\pa_{\sigma}R_{t})^2  \pa_{\sigma \sigma}R_t \, d \Ha^1\\
&+ \int_{\pa F_{t}}   \pa_{\sigma}R_{t} \,  (\nabla^2R_t \, \tau_t)\cdot \tau_t \, X _{t,\tau}     +\pa_{\sigma}R_{t}  \nabla R_t\cdot (\nabla \tau_t\, \tau_t) \, X _{t,\tau}     \, d \Ha^1.
\end{split}
\]

Note that 
\[
\frac{1}{2}\Div_\tau\big((\pa_\sigma R_t)^2 X_t\big) =\frac{1}{2}(\pa_{\sigma}R_{t})^2  \Div_\tau X_t +\pa_{\sigma}R_{t} \,  (\nabla^2R_t \, \tau_t)\cdot \tau_t \, X _{t,\tau}     +\pa_{\sigma}R_{t}  \nabla R_t\cdot (\nabla \tau_t\, \tau_t) \, X _{t,\tau}
 \]
Hence, using also \eqref{divform}, we get
\beq \label{app5}
\begin{split}
\frac{d}{dt} &\left(\frac{1}{2} \int_{\pa F_{t}} (\pa_{\sigma}R_{t})^2  \, d \Ha^1\right)\\
&=  \int_{\pa F_{t}} \frac{1}{2}\Div_\tau\big((\pa_\sigma R_t)^2 X_t\big)  \, d \Ha^1 
 -\pa_{\sigma \sigma}R_t \dot{R_t}  -  (\pa_{\sigma \sigma}R_t)^2  \pa_{\nu_t} R_t  - k_t \, (\pa_{\sigma}R_{t})^2  \pa_{\sigma \sigma}R_t \, d \Ha^1\\
&=  - \int_{\pa F_{t}} \pa_{\sigma \sigma}R_t \dot{R_t}  + (\pa_{\sigma \sigma}R_t)^2  \pa_{\nu_t} R_t  +\frac12 k_t \, (\pa_{\sigma}R_{t})^2  \pa_{\sigma \sigma}R_t  \, d \Ha^1.
\end{split}
\eeq

Therefore, recalling \eqref{Rt}, by  \eqref{app1} and  \eqref{app3} we get from \eqref{app5} that

\begin{multline} \label{app6}
\frac{d}{dt} \left(\frac{1}{2} \int_{\pa F_{t}} (\pa_{\sigma}R_{t})^2  \, d \Ha^1\right)= -  \int_{\pa F_{t}}  g(\nu_t) (\pa_{\sigma \sigma \sigma}R_t)^2\, d \Ha^1  - \int_{\pa F_{t}}   \pa_{\sigma \sigma }R_t ( \dot f_t \circ \pi_G )\, d \Ha^1 \\ 
-  \int_{\pa F_{t}} \Big(  \pa_{\nu_t} (f_t \circ \pi_G)  (\pa_{\sigma \sigma} R_t)^2  - g(\nu_t) k_t^2 (\pa_{\sigma \sigma} R_t)^2  +  \frac{1}{2}   k_t \, (\pa_{ \sigma} R_t)^2 \pa_{\sigma \sigma} R_t \Big)  \, d \Ha^1.
\end{multline}
By the ellipticity assumptions \eqref{ellipticity} we have that $c_0 \leq g(\nu_t) \leq C_0$ and $|k_t| \leq \frac{1}{c_0} |k_{\varphi,t}| \leq C$, where $C$ depends also on the $C^{2,\alpha}$-norm of $h_t$. For $\e>0$ to be chosen, using also Young's inequality, we may  estimate \eqref{app6} as
\[
\begin{split}
\frac{d}{dt} &\left(\frac{1}{2} \int_{\pa F_{t}} (\pa_{\sigma}R_{t})^2  \, d \Ha^1\right)  + c_0\int_{\pa F_{t}}   (\pa_{\sigma \sigma \sigma}R_t)^2\, d \Ha^1  \\
 &\leq   C_\eps\| \dot f_t\|_{H^{-\frac12}(\pa G)}^2  +  \eps ||\pa_{\sigma \sigma}R_t||_{H^{\frac12}(\pa F_t)}^2 + C \int_{\pa F_{t}} \left( 1+ (\pa_{\sigma \sigma}R_t)^2  + (\pa_\sigma R_t)^2 |\pa_{\sigma \sigma}R_t|  \right)  \, d \Ha^1,
\end{split}
\]
where the constant $C$ depends on the $C^{2,\alpha}$-norm of $h_t$ and the $C^{1,\alpha}$-norm of $f_t$.  Since
$\|\pa_{\sigma \sigma}R_t\|_{H^{\frac12}(\pa F_t)}\leq C \|\pa_{\sigma \sigma \sigma}R_t\|_{L^{2}(\pa F_t)}$,   by choosing $\eps$ small enough we get
\[
\begin{split}
\frac{d}{dt} &\left(\frac{1}{2} \int_{\pa F_{t}} (\pa_{\sigma}R_{t})^2  \, d \Ha^1\right)  + \frac23c_0\int_{\pa F_{t}}   (\pa_{\sigma \sigma \sigma}R_t)^2\, d \Ha^1   \\
 &\leq  C\| \dot f_t\|_{H^{-\frac12}(\pa G)}^2   + C \int_{\pa F_{t}} \left( 1+ (\pa_{\sigma \sigma}R_t)^2  + (\pa_\sigma R_t)^4 \right)  \, d \Ha^1.
\end{split}
\]
 Note now that by  Theorem~\ref{interpolation}, 
\[
\|\pa_{\sigma \sigma} R_t\|_{L^{2}}^2 \leq C \|\pa_{\sigma \sigma \sigma} R_t\|_{L^{2}}\,\|\pa_\sigma R_t\|_{L^{2}} \leq \eps  \|\pa_{\sigma \sigma \sigma} R_t\|_{L^{2}}^2 + \frac{C}{\eps} \|\pa_{ \sigma} R_t\|_{L^{2}}^2
\]
and
\[
\|\pa_{ \sigma} R_t\|_{L^{4}}^4 \leq C \|\pa_{ \sigma\sigma\sigma } R_t\|_{L^{2}}^{\frac12}\|\pa_{ \sigma} R_t\|_{L^{2}}^{\frac72} \leq \eps  \|\pa_{ \sigma\sigma\sigma} R_t\|_{L^{2}}^2 + \frac{C}{\eps} \|\pa_{ \sigma} R_t\|_{L^{2}}^{\frac{14}{3}}.
\]
Hence the estimate \eqref{stimaf2} follows for $q = \frac{7}{3}$ by choosing $\eps$ small enough. 
\end{proof}
Next theorem establishes the local in time existence for \eqref{eqf}. The same result for $f=0$ is proved in \cite{chinese}. The case considered here follows essentially from the same argument.
 \begin{theorem}\label{th:probabile}
Let $h_0\in H^4(\pa G)$ and let $f\in C^\infty(\pa G\times [0,+\infty))$.  Then, there exist $\de>0$ and  $T>0$ such that if $\|h_0\|_{C^1(\pa G))}\leq\de$, then \eqref{eqf} admits a smooth solution $(F_t)_t$ defined for all $t\in (0,T)$. Moreover, setting  
$\partial F_t = \{ x + h_t(x) \nu_{G}(x) \}$, we have that $(h_t)_t\in H^1(0,T; L^2(\pa G))\cap L^2(0,T; H^4(\pa G))$. Finally, there exists $\bar \de\in (0, \bar \eta)$, where $\bar \eta$ is as in \eqref{bareta}, depending only on $G$ and $\Om$, such that  if $\sup_{(0,T)}\|h_t-h_0\|_{C^1(\pa G)}< \bar \de$, then the solution can be extended beyond the time $T$.  
\end{theorem}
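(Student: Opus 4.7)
The plan is to reduce \eqref{eqf} to a quasilinear fourth-order parabolic equation for the height function $h_t$ and then apply standard parabolic theory along the lines of \cite{chinese}, treating the forcing $f$ as a smooth inhomogeneous term. First I parametrize $\pa F_t$ as a normal graph over $\pa G$ via \eqref{bdr}; using \eqref{formula tangent}--\eqref{curvature formula}, $\nu_t$, $\tau_t$, $k_t$ and the Jacobian $J_{F_t}$ become smooth nonlinear functions of $h_t$, $\pa_\sigma h_t$ and $\pa_{\sigma\sigma}h_t$. The kinematic relation between the Eulerian normal velocity and the height function is
\[
\pa_t h_t \;=\; \frac{J_{F_t}}{\nu_G\cdot (\nu_t\circ\pi_{F_t}^{-1})}\,V_t,
\]
so that \eqref{eqf} turns into an equation of the form
\[
\pa_t h_t \;=\; a(h_t,\pa_\sigma h_t)\,\pa_{\sigma\sigma}\bigl(g(\nu_t)k_t+f_t\circ\pi_G\bigr),
\]
with $a$ smooth and positive. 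Inserting \eqref{curvature formula} one sees that the principal part in $h_t$ is of the form $-\,g(\nu_G)\,J_{F_t}^{-5}\,\pa_\sigma^4 h_t$ plus a remainder involving derivatives of $h_t$ of order at most three, that is, a quasilinear fourth-order operator which is uniformly parabolic thanks to the ellipticity assumption \eqref{ellipticity}, provided $\|h\|_{C^1}$ stays small.

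Having identified this structure, I would set up the existence proof by freezing coefficients at the initial datum $h_0$. This yields a strongly elliptic self-adjoint linear operator on $\pa G$ with smooth coefficients, for which one has maximal $L^2$-regularity: for data in $L^2(0,T;L^2(\pa G))$ and initial datum in $H^2(\pa G)$, the linear Cauchy problem admits a unique solution in $H^1(0,T;L^2(\pa G))\cap L^2(0,T;H^4(\pa G))$, by a Galerkin argument using the eigenfunctions of the frozen operator. One then defines a map $\tilde h\mapsto h$, where $h$ solves the frozen linear equation with the quasilinear remainder $[A(h_0)-A(\tilde h)]\tilde h$ and the forcing $a(\tilde h,\pa_\sigma \tilde h)\pa_{\sigma\sigma}(f_t\circ\pi_G)$ moved to the right-hand side. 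A contraction argument on a small ball around $h_0$ in $H^1(0,T;L^2)\cap L^2(0,T;H^4)$, for $T$ sufficiently small and $\|h_0\|_{C^1}\le\delta$ sufficiently small, yields the unique solution. Smoothness of $(F_t)_t$ is obtained by standard bootstrapping: once $h\in L^2(0,T;H^4)$ with $\pa_t h\in L^2(0,T;L^2)$, the smoothness of $f$ and of the nonlinearities allows to gain one extra space/time derivative at a time.

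For the continuation statement, I would choose $\bar\delta\in(0,\bar\eta)$ so small that whenever $\sup_{t\in[0,T]}\|h_t-h_0\|_{C^1(\pa G)}<\bar\delta$, the following hold: (i) the set $F_t$ still belongs to the class where the parametrization \eqref{bdr} is regular, i.e.\ $\pi_{F_t}$ is a diffeomorphism onto $\pa G$ inside $\mathcal{N}_{\bar\eta}(\pa G)$; (ii) the parabolicity constant of the quasilinear operator is bounded below by a fixed positive number; and (iii) $\|h_t\|_{C^1}\le 2\delta$ remains within the smallness range required by the fixed-point argument. Since by parabolic smoothing $h_T\in H^4(\pa G)$, one can invoke the existence theorem once more with $h_T$ as new initial datum and obtain a solution on $[T,T+T']$ for some $T'>0$, which glues with the original one and extends it beyond $T$.

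The main obstacle is the bookkeeping in the first step: correctly expanding $\pa_{\sigma\sigma}(g(\nu_t)k_t)$ so as to isolate the principal fourth-order term in $h_t$ and to show that the remainder, together with the forcing $\pa_{\sigma\sigma}(f_t\circ\pi_G)$, is controlled in $L^2(0,T;L^2(\pa G))$ in terms of the $H^1(0,T;L^2)\cap L^2(0,T;H^4)$-norm of $h$. This is essentially carried out in \cite{chinese} for $f=0$, and here it only requires inserting the extra smooth lower-order contribution coming from $f$, which by assumption belongs to $C^\infty(\pa G\times[0,\infty))$ and therefore has $\pa_{\sigma\sigma}(f_t\circ\pi_G)\in L^2(0,T;L^2(\pa G))$ uniformly on bounded time intervals.
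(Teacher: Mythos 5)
Your proposal follows essentially the same route as the paper: the paper's proof simply refers to Theorem~2.5 of \cite{chinese}, which reduces the flow to a quasilinear fourth-order parabolic equation for the height function over a reference curve and proves short-time existence via a fixed-point/linearization argument, treating the forcing $f$ as a smooth lower-order inhomogeneity exactly as you do, and then handles the continuation statement with a $\bar\delta$ depending only on the geometry by observing that $C^1$-closeness to $h_0$ preserves parabolicity and the validity of the graph parametrization. Two small algebraic slips that do not affect the structure: the kinematic relation is $\pa_t h_t = V_t/(\nu_G\cdot\nu_t) = J_{F_t}V_t/(1+h_t k_G)$ (you have one spurious factor of $J_{F_t}$), and after transporting $\pa_{\sigma\sigma}$ on $\pa F_t$ to $\frac{1}{J_t}\pa_\sigma(\frac{1}{J_t}\pa_\sigma\cdot)$ on $\pa G$ and dividing by the kinematic factor, the principal term comes out as $-g(\nu_t)J_{F_t}^{-4}\pa_\sigma^4 h_t$ rather than $-g(\nu_G)J_{F_t}^{-5}\pa_\sigma^4 h_t$; in either case the coefficient is bounded away from zero for $\|h\|_{C^1}$ small, which is all that matters.
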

\begin{proof}
The proof goes exactly   as the one   of Theorem~2.5 of \cite{chinese}, taking into account the presence of the forcing term $f$.   Note that as in \cite{chinese} in the first part of the proof we can only conclude that the time $T$ depends on $\|h_0\|_{H^4}$ and on $\|f\|_{L^2(0,T; H^2)}$. However, one can then argue as in the second part of  proof of Theorem~2.5 of \cite{chinese} to conclude that the $\bar\de$ for which the extension property holds is independent of  $\|h_0\|_{H^4}$ and  $\|f\|_{L^2(0,T; H^2)}$, as long as 
$f\in L^2(0,T; H^2)$ (a property which is implied by our assumption on $f$). Finally, the $C^\infty$-regularity of the solution for $t>0$ follows by standard arguments (or arguing as in the proof of Theorem~\ref{Cinfty} below where in fact the more complicated equation \eqref{flow} is dealt with). 
\end{proof}
In the next lemma we use the monotone quantity \eqref{stimaf2} to deduce regularity estimates for the flow \eqref{eqf}.  
\begin{lemma} \label{flow exists 1}
Let $F_0$ be a smooth initial set  and $h_0 \in C^\infty(\pa G)$ the function representing $\pa F_0$ as in \eqref{f0}.  Fix   $M_0>0$, $\alpha \in (0,\tfrac12)$, $\de_1\in (0,\bar \de)$, with $\bar\de$ as in 
Theorem~\ref{th:probabile}. There exist $\delta_0>0$     and $T_0$, depending on $M_0$, $\alpha$,  and $\de_1$, such that if  $f \in C^\infty(\pa G\times [0,+\infty))$  satisfies 
\beq\label{flex0}
\sup_{(0,T_0)} \|f_t\|_{C^{1, \alpha}(\pa G)} \leq M_0\qquad \text{and} \qquad \int_0^{T_0} \|\dot f_t\|_{H^{-\frac12}(\pa G)}^2 \leq M_0
\eeq
 and if   $\|h_0\|_{H^3(\pa G)} \leq M_0$, $\|h_0\|_{L^{2}(\pa G)} < \delta_0$, then the flow \eqref{eqf} exists on $(0,T_0)$ and
\beq \label{flex1}
\sup_{(0,T_0)}\|h_t\|_{C^{2, \alpha}(\pa G)} \leq \delta_1 \qquad \text{and} \qquad \sup_{(0,T_0)} \|\pa_\sigma R_t\|_{L^{2}(\pa F_t)}^2 \leq 2C_1 M_0 + \|\pa_\sigma R_0\|_{L^{2}(\pa F_0)}^2\,,
\eeq
where $C_1$ is the constant appearing in Lemma~\ref{monotonisuus lemma}.
\end{lemma}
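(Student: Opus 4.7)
The strategy is a continuation/bootstrap argument: invoke Theorem \ref{th:probabile} for local existence and then use the monotonicity estimate of Lemma \ref{monotonisuus lemma} together with the evolution of the distance functional \eqref{stimaf3} to keep the $C^{2,\alpha}$-norm below the extension threshold $\bar\delta$ for all $t\in(0,T_0)$. First, I would observe that by the standard interpolation inequality in one dimension
\[
\|h\|_{C^{2,\alpha}(\pa G)}\le C\|h\|_{H^3(\pa G)}^{\theta}\|h\|_{L^2(\pa G)}^{1-\theta},\qquad \theta=\theta(\alpha)\in(0,1),
\]
together with $\|h_0\|_{H^3}\le M_0$ and $\|h_0\|_{L^2}<\delta_0$, the initial datum already satisfies $\|h_0\|_{C^{2,\alpha}}\le CM_0^\theta\delta_0^{1-\theta}\ll \delta_1$ if $\delta_0$ is small enough. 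Consequently, Theorem \ref{th:probabile} produces a smooth solution on some interval $(0,T_*)$.

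Next I would set
\[
T^{**}:=\sup\bigl\{T\in(0,T_0]\,:\,\text{the flow exists on }(0,T)\text{ and }\sup_{[0,T]}\|h_t\|_{C^{2,\alpha}(\pa G)}\le\delta_1\bigr\}
\]
and argue that $T^{**}=T_0$. On $[0,T^{**}]$ the constant $C_1$ in Lemma \ref{monotonisuus lemma} is frozen (it depends only on $\delta_1$ and $M_0$ by \eqref{flex0}). Setting $y(t):=\|\pa_\sigma R_t\|_{L^2(\pa F_t)}^2$, integrating \eqref{stimaf2} and discarding the $\pa_{\sigma\sigma\sigma}R_t$ term yields
\[
y(t)\le y(0)+C_1 M_0+C_1\int_0^t\bigl(1+y(s)\bigr)^q\,ds.
\]
Since $y(0)\le C(M_0)$ (the expression $\pa_\sigma R_0$ involves at most third derivatives of $h_0$, bounded via the $H^3$-norm, and first derivatives of $f_0$, bounded by $M_0$), a standard comparison with the scalar ODE $z'=C_1(1+z)^q$ gives, for $T_0$ chosen sufficiently small depending only on $M_0$ and $C_1$, the uniform bound $y(t)\le y(0)+2C_1M_0$ on $[0,T^{**}]$. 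This already proves the second assertion in \eqref{flex1}.

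I would then recover $H^3$-control on $h_t$. From $R_t=g(\nu_t)k_t+f_t\circ\pi_G$ and $\|f_t\|_{C^{1,\alpha}}\le M_0$, the bound on $\|\pa_\sigma R_t\|_{L^2}$ combined with $\|h_t\|_{C^{2,\alpha}}\le\delta_1$ forces $\|k_t\|_{H^1(\pa F_t)}\le C(M_0,\delta_1)$; via the curvature formula \eqref{curvature formula} this gives $\|h_t\|_{H^3(\pa G)}\le C(M_0,\delta_1)$. For the $L^2$-part, integrating \eqref{stimaf3} and using the uniform bounds on $P(F_t)$ and $y(t)$ leads to
\[
\int_{F_t\Delta G}\dist(x,\pa G)\,dx\le\int_{F_0\Delta G}\dist(x,\pa G)\,dx+C(M_0,\delta_1)\,T_0\le C\bigl(\delta_0^2+T_0\bigr).
\]
Since this integral controls $\tfrac12\|h_t\|_{L^2(\pa G)}^2$ from above (up to a factor depending on $G$), I get $\|h_t\|_{L^2}^2\le C(\delta_0^2+T_0)$. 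Interpolating as above,
\[
\|h_t\|_{C^{2,\alpha}(\pa G)}\le C(M_0,\delta_1)^\theta\bigl[C(\delta_0^2+T_0)\bigr]^{(1-\theta)/2},
\]
which can be made $\le\delta_1/2$ by first fixing $T_0$ small (depending on $M_0,\delta_1,\alpha$) and then $\delta_0$ small (depending on $T_0,M_0,\delta_1,\alpha$). This strict improvement of the defining inequality for $T^{**}$, together with the extension criterion of Theorem \ref{th:probabile} (applicable since $\|h_t-h_0\|_{C^1}\le\|h_t\|_{C^{2,\alpha}}+\|h_0\|_{C^{2,\alpha}}<\bar\delta$), rules out $T^{**}<T_0$.

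The main obstacle is the circular dependence of $C_1$ on $\sup\|h_t\|_{C^{2,\alpha}}$ in Lemma \ref{monotonisuus lemma}: one must carefully set up the continuation argument so that the a priori $C^{2,\alpha}$-bound $\delta_1$ is fixed first, then $C_1=C_1(\delta_1,M_0)$ is determined, then $T_0$ is chosen in terms of $C_1,M_0,q$ so that the ODE comparison succeeds, and only finally $\delta_0$ is chosen in terms of $T_0$ and $C_1$ to close the interpolation step strictly below $\delta_1$; choosing the parameters in any other order would produce a genuine circularity.
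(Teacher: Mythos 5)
Your proposal is correct and follows essentially the same route as the paper: interpolation to place $h_0$ in the well-posedness regime of Theorem~\ref{th:probabile}, a continuation argument driven by the two energy estimates \eqref{stimaf3} and \eqref{stimaf2}, and a final interpolation to close the bootstrap strictly below $\delta_1$. The only cosmetic difference is that the paper builds both bounds of \eqref{flex1} into the definition of the stopping time and then rules out each saturation case separately, whereas you keep only the $C^{2,\alpha}$ bound in the stopping time and recover the $\pa_\sigma R_t$ bound by a direct ODE comparison on that interval; the resulting parameter dependencies and final inequalities are the same.
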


\begin{proof}
 We fix $\delta_1 < \bar \de$ and observe if $\de_0>0$ is sufficiently small,   $\|h_0\|_{H^3(\pa G)} \leq M_0$  and  $\|h_0\|_{L^{2}(\pa G)} < \delta_0$, then from \eqref{inter3} we get $\|h_0\|_{C^{2, \alpha}(\pa G)}  < \bar\de-\delta_1$. In particular, by Theorem~\ref{th:probabile} the flow exists for a short time and as long as $\|h_t\|_{C^{2, \alpha}(\pa G)} < \delta_1$, since this implies that $\|h_t-h_0\|_{C^1(\pa G)}<\bar\de$. 
 
 Let us denote by $T_0$ the maximal time  such that 
 \beq\label{flex2}
 \|h_t\|_{C^{2, \alpha}(\pa G)} < \delta_1 \text{ and }  \|\pa_\sigma R_t\|_{L^{2}(\pa F_t)}^2 < 2C_1 M_0 + \|\pa_\sigma R_0\|_{L^{2}(\pa F_0)}^2 \quad \text{for all }t\in (0, T_0)\,.
 \eeq
We want to show that if \eqref{flex0} is satisfied, then $T_0$ is bounded away from 0 by a constant depending only $M_0$, $\alpha$, and $\de_1$. Thus, without loss of generality we may assume that 
$T_0\leq 1$, otherwise there is nothing to prove. 

Assume first that $\|h_{T_0}\|_{C^{2, \alpha}(\pa G)} =\delta_1$.  From \eqref{flex2},   from the first inequality in \eqref{flex0} and from the assumption  $\|h_0\|_{H^3(\pa G)} \leq M_0$    we conclude that  
\begin{equation}\label{stop est 00}
\|\pa_\sigma R_t\|_{L^2(\pa F_t)} \leq C(M_0) \qquad \text{for all } \, t \in (0, T_0).
\end{equation}
In turn, using again the first inequality in \eqref{flex0}   we get 
\[
  \|\pa_\sigma k_{\vphi,t}\|_{L^2(\pa F_t)}^2 \leq  C(M_0) \qquad \text{for all } \,  t \in (0, T_0).
\]
Now, by the first inequality in \eqref{flex2}, recalling also formula \eqref{curvature formula}, we deduce that 
\begin{equation}\label{stop est 0}
  \|h_t\|_{H^3(\pa G)} \leq C(M_0) \qquad \text{for all } \, t \in (0, T_0).
\end{equation}
Moreover, we conclude  by integrating  \eqref{stimaf3} over $(0,t)$ and  by \eqref{stop est 00}  that 
$$
  \|h_t\|_{L^2(\pa G)}^2 \leq C(M_0)T_0 + C\|h_0\|_{L^2(\pa G)}^2\leq C(M_0)T_0 + C \delta_0^2\qquad \text{for all } \, t \in (0, T_0)\,. 
$$
In turn, by \eqref{inter3} and by \eqref{stop est 0} we get 
\begin{align*}
\delta_1  =  \|h_{T_0}\|_{C^{2, \alpha}(\pa G)} \leq C  \, \bigl(\|h_{T_0}\|_{H^{3}(\pa G)}^{\theta} \, \|h_{T_0}\|_{L^{2}(\pa G)}^{1-\theta}+ \|h_{T_0}\|_{L^{2}(\pa G)}\bigr)\leq C(M_0)\left(\sqrt{T_0}+\delta_0\right)^{1-\theta}\,,    
\end{align*}
where $\theta$ depends only on $\alpha$. It is clear from the above inequality that if $\de_0$ is sufficiently small  we get that $T_0$ is bounded away from $0$.

Assume now that $y(T_0)=2C_1M_0+ y(0)$, where we have set $y(t):=||R_\sigma||_{L^{2}(\pa F_t)}^2$.
 By integrating \eqref{stimaf2}  over the time interval $(0,T_0)$ and using the second inequality in \eqref{flex0} we get
\[
y(T_0) \leq y(0) + C_1M_0 + C_1 \int_0^{T_0}(1 + y)^q \, dt. 
\] 
Now, using the second inequality in \eqref{flex2} we conclude that 
$$
2C_1M_0+ y(0)=y(T_0)\leq y(0) + C_1M_0 + C_1T_0 (1+  2C_1 M_0 + y(0))^q.
$$
From this estimate we get again that $T_0$ is bounded away from $0$. This concludes the proof of the lemma.
\end{proof}

We will need the following result for the elastic equilibrium, which states that  if $F$, $\widetilde F \in \mathfrak{h}^{2,\alpha}_M(G)$ are $C^{2, \alpha}$-close, then  the corresponding elastic equilibria  are $C^{2,\alpha}$-close to each other. More precisely, we have the following lemma.

\begin{lemma} \label{elastiset}
Let $0<\alpha<\beta\leq1$, $M>0$ and $k\in \N$.  Then there exists  $C>0$  such that if  $F, \widetilde F  \in \mathfrak{h}^{k,\beta}_M(G)$ we have
\beq\label{linear}
\| u_{F} \circ \pi_F^{-1} - u_{\widetilde F} \circ \pi_{\widetilde F}^{-1}\|_{C^{k,\alpha}(\pa G)} \leq C \| h_{F}-h_{\widetilde F}\|_{C^{k,\alpha}(\pa G)}.
\eeq 
Here, we recall that   $u_{F}$ and $u_{\widetilde F}$ denote the elastic equilibria 
corresponding to $F$ and $\tilde F$, respectively, as defined in  \eqref{uF}.
\end{lemma}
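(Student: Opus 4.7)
\emph{Proof strategy.} The plan is to pull both elastic equilibria back to the common reference domain $\Om\setminus G$ through $F$-dependent diffeomorphisms, to derive uniform Schauder estimates for the transformed systems, and then to treat the difference as the solution of an elliptic system whose data are linear (in the $C^{k-1,\alpha}$-norm) in $h_F-h_{\widetilde F}$.

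First, for every $F\in \mathfrak{h}^{k,\beta}_M(G)$ I would construct a $C^{k,\beta}$-diffeomorphism $\Theta_F:\overline{\Om\setminus G}\to\overline{\Om\setminus F}$ that coincides with $\pi_F^{-1}$ on $\pa G$, equals the identity outside $\mathcal{N}_{\bar\eta}(\pa G)$, and depends linearly on $h_F$. Fixing a cutoff $\chi\in C^\infty_c(\R)$ with $\chi\equiv 1$ near $0$ and $\operatorname{supp}\chi\subset(-\bar\eta/2,\bar\eta/2)$, one may for instance take $\Theta_F(y):=y+\chi(d_G(y))\,h_F(\pi_G(y))\,\nu_G(\pi_G(y))$. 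The bounds $\|h_F\|_{L^\infty}\le\bar\eta/2$ and $\|h_F\|_{C^{k,\beta}}\le M$ guarantee that both $\Theta_F$ and $\Theta_F^{-1}$ are bounded in $C^{k,\beta}$ uniformly in $F$. Setting $v_F:=u_F\circ\Theta_F\in H^1(\Om\setminus G;\R^2)$, the change of variables yields a uniformly elliptic system $\Div(\mathbb{A}_F E(v_F))=0$ in $\Om\setminus G$, with a transformed conormal condition on $\pa G\cup(\pa\Om\setminus\pa_D\Om)$, the Dirichlet datum $w_0$ on $\pa_D\Om$, and coefficients $\mathbb{A}_F$ that depend polynomially on $D\Theta_F$, $(D\Theta_F)^{-1}$ and $\det D\Theta_F$. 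Standard Schauder estimates for elliptic systems with mixed boundary conditions, applied on the fixed smooth domain $\Om\setminus G$, give a uniform bound
\[
\|v_F\|_{C^{k,\beta}(\overline{\Om\setminus G})}\le C(M,G,w_0).
\]

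Next, I would examine the system satisfied by $w:=v_F-v_{\widetilde F}$, namely
\[
\Div(\mathbb{A}_F E(w))=\Div\bigl((\mathbb{A}_{\widetilde F}-\mathbb{A}_F)E(v_{\widetilde F})\bigr)\quad\text{in }\Om\setminus G,
\]
with zero Dirichlet datum on $\pa_D\Om$ and a conormal boundary condition on $\pa G\cup(\pa\Om\setminus\pa_D\Om)$ involving $(\mathbb{A}_F-\mathbb{A}_{\widetilde F})E(v_{\widetilde F})$. Since the map $h_F\mapsto\mathbb{A}_F$ is polynomial in $D\Theta_F$ and $\Theta_F$ depends linearly on $h_F$, we have
\[
\|\mathbb{A}_F-\mathbb{A}_{\widetilde F}\|_{C^{k-1,\alpha}(\overline{\Om\setminus G})}\le C\,\|h_F-h_{\widetilde F}\|_{C^{k,\alpha}(\pa G)},
\]
and combined with the uniform $C^{k,\beta}$-bound on $v_{\widetilde F}$, the source terms in the system for $w$ have $C^{k-1,\alpha}$-norm at most $C\|h_F-h_{\widetilde F}\|_{C^{k,\alpha}(\pa G)}$. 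The condition $\alpha<\beta$ ensures $\mathbb{A}_F\in C^{k-1,\alpha}$, so Schauder estimates at the exponent $\alpha$ apply and give
\[
\|w\|_{C^{k,\alpha}(\overline{\Om\setminus G})}\le C\,\|h_F-h_{\widetilde F}\|_{C^{k,\alpha}(\pa G)}.
\]
Since $\Theta_F|_{\pa G}=\pi_F^{-1}$ by construction, we have $v_F|_{\pa G}=u_F\circ\pi_F^{-1}$, and similarly for $\widetilde F$; restricting the above estimate to $\pa G$ yields \eqref{linear}.

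The most delicate point is the Schauder theory up to the boundary for the mixed Dirichlet–Neumann elasticity system, in particular across the transition between $\pa_D\Om$ and $\pa\Om\setminus\pa_D\Om$. This issue is essentially avoided here because the conclusion only involves traces on $\pa G$, which lies strictly inside $\Om$ and is smooth: a localization argument combined with interior and boundary Schauder estimates near $\pa G$ (where only the transformed Neumann condition is present, on a smooth portion of the boundary) is enough to obtain the desired $C^{k,\alpha}$-control in a neighborhood of $\pa G$, starting from the $H^1$-energy bound for $w$. The remaining steps---constructing $\Theta_F$, computing $\mathbb{A}_F$ via the change of variables, and checking its polynomial and hence locally Lipschitz dependence on $h_F$---are routine.
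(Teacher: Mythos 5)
Your proof is correct and reaches the estimate by a more direct route than the one in the paper. Both arguments pull the elastic equilibria back to the fixed domain $\Om\setminus G$ via an $h$-dependent diffeomorphism (the paper's $\Phi_h$ plays the role of your $\Theta_F^{-1}$) and reduce the lemma to Schauder estimates for the transformed elliptic system with $h$-dependent coefficients. The difference is in how the Lipschitz dependence on $h$ is extracted: the paper packages the transformed PDE into a nonlinear map $\mathcal F(h,v)$ between Banach spaces, shows that $\pa_v\mathcal F(h,v_h)$ is boundedly invertible (this is the Schauder step), invokes the Implicit Function Theorem to get local $C^1$-dependence $h\mapsto v_h$, and finally uses compactness of $\mathfrak h^{k,\beta}_M(G)$ in $C^{k,\alpha}(\pa G)$ --- precisely where the strict inequality $\alpha<\beta$ enters --- to make the Fr\'echet derivative equibounded; you instead subtract the two transformed systems and apply Schauder estimates directly to the equation for $w=v_F-v_{\widetilde F}$, whose source term $\Div\bigl((\mathbb A_{\widetilde F}-\mathbb A_F)E(v_{\widetilde F})\bigr)$ is already linear in $\|h_F-h_{\widetilde F}\|_{C^{k,\alpha}}$. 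Your route avoids the abstract IFT and compactness machinery, and in your version $\alpha<\beta$ is used only to pass from $C^{k,\beta}$ to $C^{k,\alpha}$ bounds (for which $\alpha\le\beta$ would already suffice). Two small technical caveats: (i) for $k=1$ the source is only a distributional divergence of a $C^{0,\alpha}$ field, so the divergence-form Schauder estimate must be invoked rather than the classical one (the paper sidesteps this by treating $k=1$ separately via a citation and running the IFT argument only for $k\ge2$); (ii) the interim claim $\|v_F\|_{C^{k,\beta}(\overline{\Om\setminus G})}\le C$ cannot hold globally since $w_0$ is only $H^{1/2}$, so it should be stated as a local estimate near $\pa G$ where the Neumann boundary is smooth --- which you already acknowledge in your closing paragraph, and which is all that is needed since the conclusion only concerns traces on $\pa G$ (the same localization is implicit in the paper's argument). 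Finally, $\mathbb A_F$ is rational rather than polynomial in $(h_F,\nabla h_F)$ because of $(D\Theta_F)^{-1}$ and $(\det D\Theta_F)^{-1}$, but the constraint $\|h_F\|_{L^\infty}\le\bar\eta/2$ keeps the Jacobian uniformly nondegenerate, so the local Lipschitz estimate on $h\mapsto\mathbb A_F$ that you need is unaffected.
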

\begin{proof} The case $k=1$ can be proved as in  \cite[Lemma~6.10]{FFLM2}. We now assume $k\geq 2$. 

Denote by $\mathcal U$ the open set in $C^{k,\alpha}(\pa G)$ defined as
$$
\mathcal U:=\big\{h\in C^{k,\alpha}(\pa G):\,\,\|h\|_{L^\infty(\pa G)}<2\bar\eta/3,\,\,\|h\|_{C^{k,\alpha}(\pa G)}<M'\big.\},
$$
where $M'>0$ is chosen so large that $\mathfrak{h}^{k,\beta}_M(G)\subset\mathcal U$.

Given $h\in \mathcal U$, we denote by $u_h$ the solution to \eqref{uF}, with 
$F$ replaced by the bounded  set $F_h$ whose boundary is given by the (normal) graph of $h$ over $\pa G$.

Fix  $\psi\in C^\infty_c(\Om)$, $0\leq\psi\leq 1$, $\psi\equiv 1$ in $\{d_G\leq 2\bar\eta/3\}$, and $\mathrm{supp\,}\psi\subset\subset \{d_G< \bar\eta\}$ and notice that if $\| h\|_{C^{k,\alpha}(\pa G)}\leq \de'$, for $\de'$ sufficiently small, then the map $\Phi_h:\Om\setminus F_h\to \Om\setminus G$ of the form
$$
\Phi_h(x)=x-h(\pi_G(x))\psi(x)\nu_G(\pi_G(x))
$$
is a $C^{k,\alpha}$-diffeomorphism.

 Then setting $v_h:=u_h\circ(\Phi_h)^{-1}$ one can see that  $v_h$ is the solution to 
$$
\begin{cases}
\Div(\mathbb{A}(y, h(\pi_G(y)), \pa_\sigma h(\pi_G(y)))\nabla v)=0 & \text{in $\Omega\setminus G$}\\
\mathbb{A}(y, h(y), \pa_\sigma h(y))\nabla v[\nu_G]=0 & \text{on $\pa G$,}\\
v=w_0 & \text{on $\pa_D\Om$,}
\end{cases}
$$
where the entries of the tensor valued function $\mathbb{A}$ are 4-th order polynomials in $h\circ\pi$ and $\pa_\sigma h\circ\pi$ with $C^{k-1,\alpha}$-coefficients. It is easily checked that the map $\mathcal F:\mathcal U\times C^{k,\alpha}(\Om\setminus G)\to 
C^{k-2,\alpha}(\Om\setminus G)\times C^{k-1,\alpha}(\pa G)$ given by
$$
\mathcal F(h, v):=(\Div(\mathbb{A}(y, h(\pi_G(y)), \pa_\sigma h(\pi_G(y)))\nabla v),\mathbb{A}(y, h(y), \pa_\sigma h(y))\nabla v[\nu_G] )
$$
is of class $C^1$.
We now  check the invertibility (with continuity of the inverse) of $\pa_v \mathcal F(h, v_h)$, which is a linear operator from $C^{k,\alpha}(\Om\setminus G)$ to $C^{k-2,\alpha}(\Om\setminus G)\times C^{k-1,\alpha}(\pa G)$. This amounts to showing that for every $f\in C^{k-2,\alpha}(\Om\setminus G)$ and $g\in C^{k-1,\alpha}(\pa G)$ the system
$$
\begin{cases}
\Div(\mathbb{A}(y, h(\pi_G(y)), \pa_\sigma h(\pi_G(y)))\nabla w)=f & \text{in $\Omega\setminus G$}\\
\mathbb{A}(y, h(y), \pa_\sigma h(y))\nabla w[\nu_G]=g & \text{on $\pa G$,}\\
w=0 & \text{on $\pa_D\Om$,}
\end{cases}
$$
admits a unique solution $w\in C^{k,\alpha}(\Om\setminus G)$ such that $\|w\|_{C^{k,\alpha}(\Om\setminus G)}\leq C(\|f\|_{C^{k-2,\alpha}(\Om\setminus G)}+\|g\|_{C^{k-1,\alpha}(\pa G)})$, with $C$ depending only on  $k$, $\alpha$, $G$ and $\Om$. This follows from the classical Schauder estimates for linear elliptic systems (see for instance \cite{ADN}). Thus, since $\mathcal F(h, v_h)=0$, we may apply the Implicit Function Theorem  (see \cite[Theorem~2.3]{AP}) to deduce that there exists $\de>0$ such that the map $h\mapsto\mathcal S(h):=v_h$ is of class $C^1$ from a $\de$-neighborhood of $h$ in the $C^{k,\alpha}(\pa G)$-norm to $C^{k,\alpha}(\Om\setminus G)$. 

Since $\mathcal S$ is $C^1$ in $\mathcal U$ and $\mathfrak{h}^{k,\beta}_M(G)\subset\mathcal U$ is compact in $C^{k\,\alpha}(\pa G)$, the Fr\'echet derivative $D\mathcal S(h)$ is equibounded in $\mathcal L(C^{k,\alpha}(\pa G);C^{k,\alpha}(\Omega\setminus G))$ for $h\in\mathfrak{h}^{k,\beta}_M(G)$. Hence \eqref{linear} easily follows.
\end{proof}

Let us consider the flow  \eqref{eqf},  where $f$  satisfies the assumptions of Lemma \ref{flow exists 1}. For every given time $t$ we 
consider the elastic equilibrium $u_t$ defined in \eqref{ut}.  We start by observing that  arguing as in  \cite[Theorem 3.2]{FM09} one can show that  $\dot u_t$ satisfies 
\begin{equation}\label{eq u dot}
\int_{\Om \setminus F_t} \C E(\dot u_t) : E(\vphi) \, dx =- \int_{\pa F_t}\text{div}_\tau (\pa_{\sigma \sigma}R_t\, \C E(u_t) ) \cdot \vphi \, d \Ha^1 
\end{equation}
for all $\varphi \in H^1(\Omega \setminus F_t; \R^2)$ such that $\varphi = 0$ on $\pa_D\Omega$. Note also that $\dot u_t=0$ on $\pa_D\Omega$.

We can now prove the regularity for the elastic equilibria $u_{t}$. 

\begin{lemma} \label{regularity u}
Let  $F_0$ and $\alpha$ be as in  Lemma \ref{flow exists 1}. Fix   
\beq\label{emmezero}
M_0 > 2\|Q(E(u_G))\|_{C^{1,\alpha}(\partial G)}.
\eeq
There exist  $\delta_1\in (0,\bar \de)$,  $T_0>0$ and $\delta_0>0$ such that if $f$ satisfies \eqref{flex0},     $\|h_0\|_{H^3(\pa G)} \leq M_0$ and  $\|h_0\|_{L^{2}(\pa G)} < \delta_0$,   then the flow \eqref{eqf} exists on $(0,T_0)$, \eqref{flex1}  holds true and     for every $t \in (0,T_0)$ 
\begin{equation}\label{regu esti}
\|Q(E(u_t))\circ \pi_{F_t}^{-1}\|_{C^{1,\alpha}(\pa G)} < M_0 \quad   \text{and} \quad \int_0^{T_0} \| \pa_t \big(Q(E(u_t)) \circ \pi_{F_t}^{-1}\big)   \|_{H^{-\frac12}(\pa G)}^2 \, dt< M_0.
\end{equation}
\end{lemma}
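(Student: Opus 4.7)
The plan for the first estimate in \eqref{regu esti} is to exploit the (slightly stronger) fact hidden in the proof of Lemma \ref{elastiset}: namely that the pullback $v_h=u_{F_h}\circ\Phi_h^{-1}$ to the fixed domain $\Omega\setminus G$ depends Lipschitz continuously on $h\in C^{2,\alpha}(\pa G)$ as a $C^{2,\alpha}(\Omega\setminus G)$-valued map (this follows from the implicit function theorem argument used in that proof). Consequently, since $\nabla u_{F_h}\circ\Phi_h=\nabla v_h\cdot(D\Phi_h)^{-1}$, the map $h\mapsto E(u_{F_h})\circ\pi_{F_h}^{-1}$ is Lipschitz from $C^{2,\alpha}(\pa G)$ into $C^{1,\alpha}(\pa G)$. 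Using the quadratic nature of $Q$ I would then obtain
\[
\bigl\|Q(E(u_t))\circ\pi_{F_t}^{-1}-Q(E(u_G))\bigr\|_{C^{1,\alpha}(\pa G)}\leq C\|h_t\|_{C^{2,\alpha}(\pa G)}\leq C\delta_1,
\]
so that by \eqref{emmezero} and a sufficiently small choice of $\delta_1\in(0,\bar\de)$ (depending on $M_0$) the first inequality in \eqref{regu esti} follows. With this $\delta_1$ I would then invoke Lemma \ref{flow exists 1} to produce $\delta_0,T_0>0$ such that \eqref{flex1} holds, and I recall that integrating \eqref{stimaf2} in time yields in addition $\int_0^{T_0}\|\pa_{\sigma\sigma\sigma}R_t\|_{L^2(\pa F_t)}^2\,dt\leq C(M_0)$.

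For the second estimate, the chain rule together with $\pa_t\pi_{F_t}^{-1}=\dot h_t\nu_G$ on $\pa G$ and $V_t=\pa_{\sigma\sigma}R_t$ (so $\dot h_t=[\pa_{\sigma\sigma}R_t/(\nu_G\cdot\nu_{F_t})]\circ\pi_{F_t}^{-1}$) produces the decomposition
\[
\pa_t\bigl(Q(E(u_t))\circ\pi_{F_t}^{-1}\bigr)=\bigl(\C E(u_t):E(\dot u_t)\bigr)\circ\pi_{F_t}^{-1}+\dot h_t\,\bigl(\nabla Q(E(u_t))\cdot\nu_G\bigr)\circ\pi_{F_t}^{-1}.
\]
For the first summand I would rewrite the right-hand side of \eqref{eq u dot}, after tangential integration by parts, as $\int_{\pa F_t}\pa_{\sigma\sigma}R_t\,(\C E(u_t)\tau_t)\cdot\pa_\sigma\varphi\,d\H^1$; the $H^{1/2}/H^{-1/2}$-duality on $\pa F_t$ combined with the uniform $C^{1,\alpha}$-bound on $\C E(u_t)$ (from Schauder estimates applied to \eqref{ut} and the smallness of $\|h_t\|_{C^{2,\alpha}}$) would yield the Lax--Milgram-type energy estimate $\|\dot u_t\|_{H^1(\Omega\setminus F_t)}\leq C\|\pa_\sigma R_t\|_{H^{3/2}(\pa F_t)}$. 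The trace theorem and the multiplier property of $\C E(u_t)$ on $H^{-1/2}(\pa F_t)$ then bound the first summand in $H^{-1/2}(\pa G)$ by the same quantity. For the second summand, the identity $\pa_{\sigma\sigma}R_t=\pa_\sigma(\pa_\sigma R_t)$ and the duality $\|\pa_\sigma u\|_{H^{-1/2}}\leq\|u\|_{H^{1/2}}$ give $\|\dot h_t\|_{H^{-1/2}(\pa G)}\leq C\|\pa_\sigma R_t\|_{H^{1/2}(\pa F_t)}$.

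To close the argument I would use Sobolev interpolation on the closed curve $\pa F_t$ in the form $\|\pa_\sigma R_t\|_{H^s}^2\leq C\|\pa_\sigma R_t\|_{L^2}^{2-s}\|\pa_\sigma R_t\|_{H^2}^{s}$ for $s\in[0,2]$; combined with the uniform bound on $\|\pa_\sigma R_t\|_{L^2}$ from \eqref{flex1}, with $\int_0^{T_0}\|\pa_{\sigma\sigma\sigma}R_t\|_{L^2}^2\,dt\leq C(M_0)$ and with H\"older's inequality in time, this yields $\int_0^{T_0}\|\pa_\sigma R_t\|_{H^{3/2}(\pa F_t)}^2\,dt\leq C(M_0)T_0^{1/4}$ (and analogously for $\|\pa_\sigma R_t\|_{H^{1/2}}$). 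Consequently
\[
\int_0^{T_0}\bigl\|\pa_t(Q(E(u_t))\circ\pi_{F_t}^{-1})\bigr\|_{H^{-1/2}(\pa G)}^2\,dt\leq C(M_0)\,T_0^{1/4}<M_0
\]
provided $T_0$ is further shrunk if necessary. I expect the main obstacle to be the duality-based energy estimate for $\dot u_t$: one must carefully redistribute the two tangential derivatives of $R_t$ via integration by parts on $\pa F_t$ and verify that multiplication by the elastic strain $\C E(u_t)$ preserves the fractional Sobolev norms $H^{\pm 1/2}(\pa F_t)$ with constants uniform in $t\in(0,T_0)$.
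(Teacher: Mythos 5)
Your overall strategy matches the paper's: prove the first estimate in \eqref{regu esti} via the Lipschitz dependence of the elastic equilibrium on $h$ (Lemma~\ref{elastiset}), decompose the time derivative of $Q(E(u_t))\circ\pi_{F_t}^{-1}$ into the $\dot h_t$-advection term and the $\C E(u_t):E(\dot u_t)$ term, and estimate each one by the $H^{-1/2}$-duality/energy identity for $\dot u_t$ combined with interpolation and the bound $\int_0^{T_0}\|\pa_{\sigma\sigma\sigma}R_t\|_{L^2}^2\,dt\leq C(M_0)$ that follows from \eqref{stimaf2}. The handling of the $\dot h_t$ term in $H^{-1/2}$ and the final $T_0^{1/4}$ absorption via H\"older in time are slight variants of the paper's argument (which keeps $\dot h_t$ in $L^2$ and uses Young to split into $\eps\|\pa_{\sigma\sigma\sigma}R_t\|_{L^2}^2 + C_\eps\|\pa_\sigma R_t\|_{L^2}^2$, then feeds it back into \eqref{stimaf2}), but both variants close.

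The genuine gap lies where you bound $\|(\C E(u_t):E(\dot u_t))\circ\pi_{F_t}^{-1}\|_{H^{-1/2}(\pa G)}$ by what you call ``the same quantity.'' From the Lax--Milgram estimate you only control $\|\dot u_t\|_{H^1(\Omega\setminus F_t)}$; the trace theorem gives $\dot u_t\big|_{\pa F_t}\in H^{1/2}$, but it does \emph{not} give a boundary trace of the full strain $E(\dot u_t)$, which contains the normal derivative $\pa_{\nu_t}\dot u_t$ — for an $H^1$ function this has no well-defined trace, so $\|E(\dot u_t)\|_{H^{-1/2}(\pa F_t)}$ is not controlled by $\|\dot u_t\|_{H^1(\Om\setminus F_t)}$. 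The obstacle you flag at the end (the multiplier property of $\C E(u_t)$ on $H^{\pm 1/2}$) is a non-issue because $\C E(u_t)\in C^{1,\alpha}$ with uniform bounds. The missing ingredient is the Neumann boundary condition $\C E(u_t)[\nu_t]=0$ on $\pa F_t$: since $\C E(u_t)$ is symmetric, one has $\C E(u_t):E(\dot u_t)=\C E(u_t):\nabla\dot u_t$, and the boundary condition kills the normal part, giving $\C E(u_t):E(\dot u_t)=\C E(u_t):\nabla_\tau\dot u_t$ on $\pa F_t$. Now only the tangential derivative of the trace enters, and $\|\nabla_\tau\dot u_t\|_{H^{-1/2}(\pa F_t)}\leq C\|\dot u_t\|_{H^{1/2}(\pa F_t)}$ (continuity of $\pa_\sigma:H^{1/2}\to H^{-1/2}$, cf.~\cite[Theorem~8.6]{FM09}), which is finally controlled by $\|\dot u_t\|_{H^1(\Om\setminus F_t)}$ via the trace theorem and Korn's inequality. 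Without this reduction, the step from the energy estimate to the $H^{-1/2}(\pa F_t)$ bound is incorrect.
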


\begin{proof}
Let $u_G$ be the elastic equilibrium in $G$.  We first recall that if $\de_0$ is as in Lemma~\ref{flow exists 1}, then by the first inequality in \eqref{flex1}  and \eqref{linear} we have
\begin{equation}\label{regu esti 2}
\|u_t\circ \pi_{F_t}^{-1} -u_G\|_{C^{2,\alpha}(\partial G)} \leq C \|h_t\|_{C^{2,\alpha}(\pa G)} \leq  C \delta_1. 
\end{equation}
Therefore, choosing   $\delta_1\in (0, \bar\de)$  sufficiently small (depending on $M_0$) and recalling \eqref{emmezero}, the first estimate in \eqref{regu esti} follows. 

For the second estimate we calculate
\[
\begin{split}
\pa_t  &\big(Q(E(u_t))(x + h_t(x)\nu_G(x))\big) \\
&= \C E(u_t) \circ \pi_{F_t}^{-1}  : \big((\nabla E(u_t) \circ \pi_{F_t}^{-1}) [\dot h_t  \nu_G]\big) + (\C E(u_t) :E(\dot u_t)) \circ \pi_{F_t}^{-1}.
\end{split}
\]
Therefore by the $C^{2,\alpha}$-bound \eqref{flex1}  on  $h_t$  and by \eqref{regu esti 2}  we have that
\beq \label{regu est 3}
\|\pa_t  \big( Q(E(u_t)) \circ \pi^{-1}_{F_t}\big) \|_{H^{-\frac12}(\pa G)} \leq C(M_0) \|\dot h_t\|_{L^{2}(\pa G)} + C(M_0) \|\C E(u_t) :E(\dot u_t)\|_{H^{-\frac12}(\pa F_t)}.
\eeq 

Observe first that the normal velocity of $\pa F_t$  can be written on $\pa G$ as 
\[
V \circ \pi_{F_t}^{-1}  = \dot h_t \frac{1+h_tk_G}{J_t}.
\]
Therefore by the definition of the flow, recalling  the interpolation inequality \eqref{inter2}, we get
\beq \label{regu est 4}
\|\dot h_t\|_{L^2(\pa G)} \leq C \|V\|_{L^2(\pa F_t)} =  C\|\pa_{\sigma\sigma}R_t\|_{L^{2}(\pa F_t)} \leq C\|\pa_{\sigma\sigma\sigma}R_t\|_{L^2(\pa F_t)}^{\frac12}\|\pa_{\sigma}R_t\|_{L^2(\pa F_t)}^{\frac12}.
\eeq 

In order to estimate second term in \eqref{regu est 3} we recall that $\C E(u_t)[\nu] = 0$. Therefore, using  \eqref{regu esti 2} again, we get
\beq \label{est udot 1}
\begin{split}
 \|\C E(u_t) :E(\dot u_t)\|_{H^{-\frac12}(\pa F_t)} &= \|\C E(u_t) : \nabla\dot u_t\|_{H^{-\frac12}(\pa F_t)} \\
&=  \|\C E(u_t) : \nabla_\tau\dot u_t\|_{H^{-\frac12}(\pa F_t)} \leq C \|\nabla_\tau \dot u_t\|_{H^{-\frac12}(\pa F_t)}.
\end{split}
\eeq
Choosing as a  test function $\vphi =  \dot u_t$  in the equation \eqref{eq u dot} we get arguing as above   that 
\beq \label{est udot 2}
\begin{split}
2 \int_{F_t} Q(E(\dot u_t)) \, dx &= - \int_{\pa F_t}\text{div}_\tau (\pa_{\sigma\sigma}R_t \,\C E(u_t) ) \cdot \dot u_t \, d \Ha^1\\
& =    \int_{\pa F_t} \pa_{\sigma\sigma}R_t \,\C E(u_t) : \nabla_\tau\dot u_t \, d \Ha^1\\
&\leq C(M_0) \|\pa_{\sigma\sigma}R_t \|_{H^{\frac12}(\pa F_t)}\|\nabla_\tau\dot u_t\|_{H^{-\frac12}(\pa F_t)}.
\end{split}
\eeq
Recall that $\dot u_t = 0$ on $\pa_D\Omega$. Therefore by  Korn's inequality, by \eqref{est udot 2} and by the interpolation inequality \eqref{inter1} we get 
\[
\begin{split}
\|\nabla_\tau\dot  u_t\|_{H^{-\frac12}(\pa F_t)}^2 &\leq C \|\dot u_t\|_{H^{\frac12}(\pa F_t)}^2\leq C\int_{\Omega\setminus F_t} |\nabla \dot u_t|^2 \, dx \\
&\leq C\int_{F_t}  Q(E(\dot u_t)) \, dx \leq C(M_0) \|\pa_{\sigma\sigma}R_t \|_{H^{\frac12}(\pa F_t)}\|\nabla_\tau\dot u_t\|_{H^{-\frac12}(\pa F_t)}\\
&\leq C(M_0) \big(\|\pa_{\sigma \sigma \sigma}R_t\|_{L^2}^{\frac34}\|\pa_\sigma R_t\|_{L^2}^{\frac14}+\|\pa_\sigma R_t\|_{L^2}\big)\|\nabla_\tau\dot u_t\|_{H^{-\frac12}(\pa F_t)}.
\end{split}
\]
Note that the first inequality above  uses  the fact that the tangential derivative is a continuous operator from $H^{1/2}$ to $H^{-1/2}$. This is a well-known fact, see for instance \cite[Theorem~8.6]{FM09}. 
This inequality together with \eqref{est udot 1} yields
\beq \label{est udot 3}
 \|\C E(u_t) :E(\dot u_t)\|_{H^{-\frac12}(\pa F_t)} \leq C(M_0) \big(\|\pa_{\sigma \sigma \sigma}R_t\|_{L^2(\pa F_t)}^{\frac34}\|\pa_\sigma R_t\|_{L^2(\pa F_t)}^{\frac14}+\|\pa_\sigma R_t\|_{L^2(\pa F_t)}\big).
 \eeq
 
By combining the estimates  \eqref{regu est 3}, \eqref{regu est 4} and  \eqref{est udot 3} we deduce  that for every $\eps>0$ 
\[
\int_0^{T_0}\| \big(\pa_t  Q(E(u_t))\big) \circ \pi^{-1}_{F_t} \|_{H^{-\frac12}(\pa G)}^2\,dt \leq \int_0^{T_0} \big(\eps  \|\pa_{\sigma \sigma \sigma}R_t\|_{L^2(\pa F_t)}^2 + C_\eps\|\pa_\sigma R_t\|_{L^2(\pa F_t)}^2\big)\,dt. 
\]
Hence, taking $\eps$ sufficiently small we obtain  by \eqref{stimaf2} and by Lemma \ref{flow exists 1} 
\[
\begin{split}
\int_0^{T_0} \| \big(\pa_t  &Q(E(u_t)) \big) \circ \pi^{-1}_{F_t} \|_{H^{-\frac12}(\pa G)}^2\,dt \leq \\
&\leq \frac12 \int_0^{T_0}\| \dot f_t  \|_{H^{-\frac12}(\pa G)}^2\,dt  + C(M_0) \sup_{(0,T_0)} \big( 1+  \|\pa_\sigma R_t\|_{L^2(\pa F_t}^2 )^q T_0\leq \frac12 M_0 + C T_0.
\end{split}
\]
Thus the second estimate in \eqref{regu esti} follows by choosing $T_0$ sufficiently small. 
\end{proof}

\smallskip

\begin{proof}[\textbf{Proof of Theorem~\ref{th:existence}}]
We divide the proof into several steps.

\noindent{\bf Step 1.}  Fix $\mu\in (0,1)$ and let $M_0$, $T_0$, $\alpha$, $\delta_1$ and $\delta_0$ be as in 
Lemma~\ref{regularity u}.   Let $f_1$, $f_2\in C^\infty(\pa G\times [0,+\infty)$ satisfy the assumptions of 
Lemma~\ref{regularity u}, let $h_0\in C^\infty(\pa G)$ satisfy  $\|h_0\|_{H^3(\pa G)}\leq M_0$, $\|h_0\|_{L^2(\pa G)} < \delta_0$, and let  $F_{t,i} $ be a solution of \eqref{eqf} with $f$ replaced by $f_i$. 
Denote by $h_{t,i}$ the function such that 
$\pa F_{t,i}= \{ x + h_{t,i}(x) \nu_G(x) : x \in \pa G\}$.
 We start by showing that there exists $T\in (0, T_0)$ such that    
\beq 
\label{contraction}
\int_0^{T} \int_{\pa G} (h_{t,2}  - h_{t,1})^2 \, d\Ha^1dt \leq \mu \int_0^{T} \int_{\pa G} (f_2 - f_1)^2 \, d\Ha^1dt.
\eeq
Note in particular that the above inequality implies the uniqueness of the solution of \eqref{eqf} when all the data are smooth.  

Recall that by Lemma \ref{flow exists 1} we have that $\|h_{t,i}\|_{C^{2,\alpha}} \leq \delta_1$ for all $t \in (0,T_0)$, for  $i = 1,2$.
Note that we may write the equation \eqref{eqf}  as 
\beq \label{eq on G}
\frac{(1+ h_{t,i} k_G)}{J_{t,i}} \, \dot h_{t,i} =  \frac{1}{J_{t,i}} \pa_\sigma \left(\frac{1}{J_{t,i}} \pa_\sigma  \left( (g(\nu_{F_{t,i}}) k_{F_{t,i}}) \circ \pi_{F_{t,i}}^{-1} + f_i \right) \right)\quad\text{ on }\pa G
\eeq
for $i = 1,2$ respectively, where $J_{t,i} = \sqrt{(1 + h_{t,i} k_G)^2 + (\pa_\sigma h_{t,i})^2}$. 
To simplify the notation we write
$g_{t, i}$ and $k_{t, i}$ in place of  $g(\nu_{F_{t,i}})$ and  $k_{F_{t,i}}$, respectively. Note that by the 
$C^{2,\alpha}$ bounds on $h_{t,2}$ and $h_{t,1}$, and by the expressions \eqref{formula tangent} and \eqref{formula normal}  we may estimate 
\beq \label{uniso lip}
|g_{t,2}(\pi_{F_{t,2}}^{-1}(x)) - g_{t,1}(\pi_{F_{t,1}}^{-1}(x))| 
\leq C \left(|\pa_\sigma (h_{t,2}-h_{t,1})(x)| + |(h_{t,2}-h_{t,1})(x)| \right)
\eeq
for every $x \in \pa G$. Moreover from the  expression  \eqref{curvature formula}, from the  $C^{2,\alpha}$-bounds on $h_2$ and $h_1$, and from the ellipticity 
assumptions on $\vphi$  we deduce that there are positive constants $c$ and $C$ such that 
\beq \label{curv lip}
\begin{split}
[k_{t,2}(\pi_{F_{t,2}}^{-1}) - k_{t,1}(\pi_{F_{t,1}}^{-1})]& \, \pa_{\sigma \sigma}(h_{t,2}-h_{t,1}) \\
&\leq - c |\pa_{\sigma \sigma}(h_{t,2}- h_{t,1})|^2 + C \left(|\pa_\sigma (h_{t,2}-h_{t,1})|^2 + |(h_{t,2}-h_{t,1})|^2 \right)
\end{split}
\eeq
on $\pa G$.

Multiply the equation \eqref{eq on G} by $h_{t,2}-h_{t,1}$ for $i = 1,2$, integrate over $\pa G$ and integrate by parts twice to get

\[
\begin{split}
\int_{\pa G} \dot h_{t,i} & \, (h_{t,2} - h_{t,1}) \,d \Ha^1 \\
& = \int_{\pa G}  \pa_\sigma \left(\frac{1}{J_{t,i}} \pa_\sigma  \left( (g_{t,i} \,  k_{t,i})\circ \pi_{F_{t,i}}^{-1}  + f_i \right) \right) \, \left( \frac{1}{1 + h_{t,i} k_G}(h_{t,2} - h_{t,1}) \right) \, d \Ha^1\\
&= \int_{\pa G}  \left( (g_{t,i} \,  k_{t,i} )\circ \pi_{F_{t,i}}^{-1}  + f_i  \right) \,  \pa_\sigma \left(\frac{1}{J_{t,i}}  \pa_\sigma  \left( \frac{1}{1 + h_{t,i} k_G} (h_{t,2} - h_{t,1}) \right) \right)\, d \Ha^1.
\end{split}
\]
Substract the equation for $i = 1$ from the equation for $i = 2$ to get 
\[
\begin{split}
\frac{d}{dt}  \biggl(\frac12\int_{\pa G}  & \, (h_{t,2} - h_{t,1})^2 \,d \Ha^1 \biggr)   = \int_{\pa G} (h_{t,2} - h_{t,1}) \, (\dot h_{t,2} - \dot h_{t,1}) \,d \Ha^1 \\
&=\int_{\pa G}   ( (g_{t,2} \,  k_{t,2} )\circ \pi_{F_{t,2}}^{-1}  + f_2 )  \,  \pa_\sigma \left(\frac{1}{J_{t,2}}  \pa_\sigma  \left( \frac{1}{1 + h_{t,2} k_G} (h_{t,2} - h_{t,1}) \right) \right) \, d \Ha^1\\
&\,\,\,\,\, -\int_{\pa G}   ( (g_{t,1} \,  k_{t,1} )\circ \pi_{F_{t,1}}^{-1}   + f_1 )  \,  \pa_\sigma \left(\frac{1}{J_{t,1}}  \pa_\sigma  \left( \frac{1}{1 + h_{t,1} k_G} (h_{t,2} - h_{t,1}) \right) \right) \, d \Ha^1.
\end{split}
\]
By the $C^{2,\alpha}$-bounds  on $h_{t,2}, h_{t,1}$, by $C^{0,\alpha}$-bounds  on $f_2, f_1$,  by \eqref{curvature formula} and by \eqref{uniso lip} and  \eqref{curv lip}   we  conclude that there are positive constants $c$ and $C$ such that 
\beq \label{exist pt1}
\begin{split}
 \frac{d}{dt} \biggl(\frac12\int_{\pa G}   \, (h_{t,2} & - h_{t,1})^2 \,d \Ha^1 \biggr)  + c \int_{\pa G} |\pa_{\sigma \sigma}(h_{t,2} -h_{t,1})|^2\,d \Ha^1  \\
 &\leq  C \int_{\pa G} (f_2 -f_1)^2\,d \Ha^1 + C \int_{\pa G} |\pa_{\sigma}(h_{t,2} -h_{t,1})|^2 + (h_{t,2}-h_{t,1})^2 \,d \Ha^1. 
\end{split}
\eeq

Denote $w_t(x) := h_{t,2}(x)-h_{t,1}(x)$. By interpolation we have
\[
\|\pa_\sigma w_t\|_{L^2}^2 \leq C\, \|\pa_{\sigma \sigma} w_t\|_{L^2} \|w_t\|_{L^2} + C \|w_t\|_{L^2}^2 \leq \eps \|\pa_{\sigma \sigma} w_t\|_{L^2}^2 + C_\eps \|w_t\|_{L^2}^2.
\]
Hence, for $\eps$  small enough, we obtain by \eqref{exist pt1} that 
\beq \label{exist pt2}
 \frac{d}{dt} \left(\int_{\pa G}   \, w_t^2 \, \Ha^1 \right) \leq C \int_{\pa G} w_t^2\,d \Ha^1  + C \int_{\pa G} (f_2 -f_1)^2\,d \Ha^1.
\eeq
Take $T \in (0,T_0)$. Recall that $w_0\equiv 0$. Therefore integrating \eqref{exist pt2} over $(0,t)$, with  $t\in (0, T)$, implies
\beq \label{exist pt3}
  \int_{\pa G}   \, w_t^2 \,d \Ha^1  \leq C  \int_0^{T} \int_{\pa G}w_s^2\,d \Ha^1 ds + C  \int_0^{T} \int_{\pa G} (f_2 -f_1)^2\,d \Ha^1 ds.
\eeq
Integrating \eqref{exist pt3} over $(0,T)$ yields
\[
\int_0^{T} \int_{\pa G}   \, w_t^2 \,d \Ha^1 dt  \leq CT   \int_0^{T} \int_{\pa G}w_t^2\,d \Ha^1 dt  + CT  \int_0^{T} \int_{\pa G} (f_2 -f_1)^2\,d \Ha^1 dt.
\]
Therefore  \eqref{contraction} follows by taking   $T\in (0, T_0)$  sufficiently small. 

\medskip

\noindent {\bf Step 2.}   Fix $M_0> 2 
\|Q(E(u_G))\|_{C^{1,\alpha}(\partial G)}$,  $\mu \in (0,1)$, and let $T$, $\de_0$ be as in Step 1.
Define the function space
\[
\mathcal{C}:= \Big{\{} f \in L^2(0,T; L^2(\pa G)) \, : \, \sup_{(0,T)} \|f\|_{C^{1,\alpha}} \leq M_0, \, \int_0^{T} \| \dot f_t   \|_{H^{-\frac12}(\pa G)}^2 \leq M_0 \Big{\}}.
\]
We want to show that for every $f\in \mathcal{C}$ equation \eqref{eqf} has a solution in the interval $(0,T)$, and that \eqref{contraction} holds for $f_1$, $f_2\in \mathcal{C}$.
 
 To this end, fix $h_0\in H^3(\pa G)$ satisfying  $\|h_0\|_{H^3(\pa G)}\leq M_0$, $\|h_0\|_{L^2(\pa G)} < \delta_0$, and let $f\in \mathcal{C}$. Consider a sequence $f_n\in \mathcal{C}\cap C^\infty(\pa G\times [0,+\infty))$ such that  $f_n\to f$ in $L^2(0, T; L^2(\pa G))$ and a sequence of smooth functions $h_n$ such that 
 $\|h_n\|_{H^3(\pa G)}\leq M_0$ and $h_n \to h_0$ in $L^2(\pa G)$. Denote by $F_{t, n}$ the solution
 of  \eqref{eqf} with forcing term $f_n$ and initial datum $h_n$, and let $h_{t,n}$ be the function on $\pa G$  such that $\pa F_{t,n}= \{ x + h_{t,n}(x) \nu_G(x) : x \in \pa G\}$.

Observe that from \eqref{flex1} we have that 
$$
\sup_n\sup_{(0,T)}\bigl(\|h_{t, n}\|_{C^{2, \alpha}(\pa G)} + \|\pa_\sigma R_{t,n}\|_{L^{2}(\pa F_t)}\bigr)<+\infty,
$$
where $R_{t,n}$ is defined as in \eqref{Rt} with $f$ replaced by $f_n$. In turn \eqref{stimaf2} yields that 
$R_{t,n}$ is uniformly bounded in $L^2(0, T; H^3(\pa G))$ and thus $h_{t,n}$ is uniformly bounded in 
$H^1(0, T; H^1(\pa G))$. Therefore, up to a (not relabelled) subsequence, we may assume that 
$h_{t,n}\wto h_t$ weakly in $H^1(0, T; H^1(\pa G))$ and, recalling the uniform $C^{2,\alpha}$ bounds on 
$h_{t,n}$ we may conclude that in fact  $h_{t,n}\to h_t$ in $C^{2,\beta}(\pa G)$ for all $\beta\in (0, \alpha)$ and for all $t\in (0, T)$ and thus $R_{t,n}\circ \pi_{F_{t,n}}^{-1}\to R_t\circ \pi_{F_{t}}^{-1}$ in $C^{0, \beta}(\pa G)$, where $F_t$ is the set corresponding to $h_t$. It is now easy to see that the equation passes to the limit and $F_t$ is a solution of  \eqref{eqf} with initial datum $h_0$  and forcing term $f$.  Note also that the same approximation argument yields that \eqref{contraction} holds true also in the case where $f_1$, $f_2\in \mathcal C$, so that in particular the solution is unique also in this case. Moreover, again by approximation,  the conclusions of Lemmas~\ref{flow exists 1} and \ref{regularity u} remain true. 

\medskip

\noindent{\bf Step 3.} Fix $h_0$ as in Step 2 and consider the  map $\mathcal T : \mathcal{C} \to \mathcal{C}$ defined by $Tf(\cdot, t) =- Q(E(u_t))\circ \pi_{F_t}^{-1}$ for all $t\in [0, T)$, where $F_t$ is the solution of  \eqref{eqf} with initial datum $h_0$ and forcing term $f$ (and $u_t$ is the elastic equilibrium in $\Omega\setminus F_t$). From  \eqref{regu esti}, which holds also in our case thanks to the previous step,  it follows  that the map is well defined. In order to conclude the proof it is enough to show that $\mathcal T$ is a contraction and thus it admits a fixed point.
To this aim, with the same notation of Step 1, for any $f_1$, $f_2\in \mathcal C$ and for  any $\e>0$ we have
\begin{align*}
\int_0^{T}&\|Q(E(u_{t,1}))\circ \pi_{F_{t,1}}^{-1}-Q(E(u_{t,2}))\circ \pi_{F_{t,2}}^{-1}\|^2_{L^2(\pa G)}\, dt
 \\
 &\leq C\int_0^{T}\|Q(E(u_{t,1}))\circ \pi_{F_{t,1}}^{-1}-Q(E(u_{t,2}))\circ \pi_{F_{t,2}}^{-1}\|^2_{C^{0,\alpha}(\pa G)}\, dt\\
 & \leq C \int_0^{T}\|h_{t, 1}-h_{t, 2}\|_{C^{1, \alpha}(\pa G)}^2\, dt \\
 &\leq C  \int_0^{T}\bigl[\|\pa_{\sigma\sigma}(h_{t, 1}-h_{t, 2})\|_{L^2}^{2\theta}\|h_{t, 1}-h_{t, 2}\|_{L^2}^{2(1-\theta)}+\|h_{t, 1}-h_{t, 2}\|_{L^2}^{2}\bigr]\, dt\\
 &\leq \int_0^{T}\bigl[\e\|\pa_{\sigma\sigma}(h_{t, 1}-h_{t, 2})\|_{L^2}^{2}+C_\e\|h_{t, 1}-h_{t, 2}\|_{L^2}^{2}\bigr]\, dt,
\end{align*}
where we used \eqref{linear} and \eqref{inter3}.
We use  \eqref{exist pt1} and \eqref{inter2}, argue as in Step 1, to control the last integral in the above chain of inequalities and deduce that there exists $C_1>0$ independent of $\e$ such that 
\begin{align*}
\int_0^{T}\|Q(E(u_{t,1}))\circ \pi_{F_{t,1}}^{-1}&-Q(E(u_{t,2}))\circ \pi_{F_{t,2}}^{-1}\|^2_{L^2(\pa G)}\, dt\\
&\leq C_1 \e \int_0^{T}\|f_1-f_2\|_{L^2}^2\, dt+ C_\e\int_0^{T}\|h_{t, 1}-h_{t, 2}\|_{L^2}^{2}\, dt\\
&\leq C_1 \e \int_0^{T}\|f_1-f_2\|_{L^2}^2\, dt+ C_\e\mu \int_0^{T}\|f_1-f_2\|_{L^2}^2\, dt,
\end{align*}
where the last inequality follows from \eqref{contraction}. The conclusion follows by taking $\e$ and then $\mu$ sufficiently small. 
\end{proof}

We conclude this section by showing that the solution provided by Theorem~\ref{th:existence} is in fact classical, namely of class 
$C^{\infty}$.
\begin{theorem}\label{Cinfty}
 Under the assumptions of  Theorem~\ref{th:existence} we have $(h_t)_{t\in (0,T)}\in C^\infty(0,T;C^{\infty}(\pa G))$.
\end{theorem}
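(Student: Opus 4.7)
The strategy is a bootstrap. Starting from the base regularity provided by Theorem~\ref{th:existence}, I would iteratively gain spatial and then temporal derivatives, using parabolic estimates for a quasilinear fourth-order equation combined with the elliptic regularity of the elastic term from Lemma~\ref{elastiset}. Since smoothness is claimed only on the open interval $(0,T)$, it suffices to fix $[t_0,t_1]\subset(0,T)$ and upgrade regularity there, so the modest initial regularity plays no role.

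First I would rewrite \eqref{flow}, via \eqref{curvature formula}, as a quasilinear parabolic equation of the form
\[
\dot h_t = -A(x,h_t,\pa_\sigma h_t)\,\pa_\sigma^4 h_t + B(x,h_t,\pa_\sigma h_t,\pa_\sigma^2 h_t,\pa_\sigma^3 h_t) - \pa_\sigma^2\bigl(f_t\circ\pi_G\bigr),
\]
where $f_t := Q(E(u_t))\circ \pi^{-1}_{F_t}$ and $A>0$ by the ellipticity assumption \eqref{ellipticity}. By Theorem~\ref{th:existence} and Lemma~\ref{elastiset}, $h_t$ and $f_t$ lie in $C^{2,\alpha}(\pa G)$ for every $t$, while the computation in Lemma~\ref{regularity u} gives $\dot f_t\in L^2(0,T;H^{-1/2}(\pa G))$.

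The core of the bootstrap is a higher-order version of Lemma~\ref{monotonisuus lemma}. By repeatedly differentiating the identities \eqref{app2}--\eqref{app3} and redoing the commutator computation of \eqref{app4}--\eqref{app6}, one derives, for each $j\geq 1$, an energy inequality of the schematic form
\[
\frac{d}{dt}\int_{\pa F_t}(\pa_\sigma^{j}R_t)^2\,d\Ha^1 + c_0\int_{\pa F_t}(\pa_\sigma^{j+2}R_t)^2\,d\Ha^1 \leq C_j\bigl(1+\|\dot f_t\|_{H^{j-1/2}(\pa G)}^2\bigr),
\]
valid provided $h_t$ is already sufficiently regular in space for the coefficients to make sense. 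Integration in $t$ together with a mean-value choice of the starting instant yields $\pa_\sigma^j R_t\in L^\infty_t L^2_x\cap L^2_t H^2_x$ on $[t_0,t_1]$ for every $j$. Using then the identity $R_t+f_t=g(\nu_t)k_t$ and \eqref{curvature formula} to read off spatial derivatives of $h_t$, and Lemma~\ref{elastiset} to upgrade $f_t$ in parallel, one obtains $h_t\in H^k(\pa G)$ for every $k$, uniformly on $[t_0,t_1]$.

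For time regularity, differentiating the evolution equation in $t$ shows that $\dot h_t$ solves a linear fourth-order parabolic equation whose coefficients depend on $h_t$ and whose forcing involves $\dot f_t$; the regularity of $\dot f_t$ is controlled by differentiating \eqref{eq u dot} in time and applying elliptic Schauder estimates for the elastic system on $\Omega\setminus F_t$, whose source is a polynomial in the already regular space-time derivatives of $h_t$ through the transformation formulas \eqref{formula tangent}--\eqref{curvature formula}. Iterating bootstraps $\pa_t^m h_t$ in every Sobolev norm on $[t_0,t_1]$, and Sobolev embedding yields the claimed $C^\infty$ smoothness. The main obstacle is the nonlocal elastic term, whose regularity must keep pace with that of $h_t$ at every stage; this is handled by a straightforward extension of Lemma~\ref{elastiset} to all $C^{k,\alpha}$-scales (via the same implicit function theorem argument) together with iterated differentiation of \eqref{eq u dot}.
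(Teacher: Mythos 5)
Your overall strategy — a parabolic bootstrap for the quasilinear fourth-order equation combined with the elliptic estimates of Lemma~\ref{elastiset} — is the right kind of idea, but the specific mechanism you propose is genuinely different from the paper's, and it hides a circularity that the paper's mechanism is designed to avoid.

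The paper does not re-derive higher-order versions of Lemma~\ref{monotonisuus lemma}. Instead it works throughout with \emph{difference quotients} of $h_t$, applied directly to the evolution equation written on $\pa G$ as \eqref{Cinfty1}: first time-differences multiplied by $h_{t+\Delta t}-h_t$, then by $\pa_{\sigma\sigma}(h_{t+\Delta t}-h_t)$, then arclength-differences, then time-differences against higher spatial derivatives, and finally against $\dot h_{t+\Delta t}-\dot h_t$. At each level the same two ingredients are reused: the $L^2$-contraction estimate \eqref{exist pt1} from the proof of Theorem~\ref{th:existence}, and the Lipschitz bound of Lemma~\ref{elastiset} (which, incidentally, is already stated for arbitrary $k\in\N$ — no extension is needed, contrary to what you suggest). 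There is no combinatorial re-derivation of commutator identities.

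The gap in your scheme is that the ``higher-order version of Lemma~\ref{monotonisuus lemma}'' you invoke is a purely formal computation: the derivation of \eqref{app2}--\eqref{app6} already assumes the flow is smooth, and differentiating those identities $j$ more times only makes sense once $h_t$ has the regularity you are trying to establish. You flag this yourself (``valid provided $h_t$ is already sufficiently regular''), but that is exactly the circularity one must break. The paper breaks it by replacing derivatives by difference quotients, which are always admissible multipliers. A further ordering subtlety — dealt with in Step~3 of the paper's proof — is that one cannot test the time-differenced equation against $\pa_\sigma^4(h_{t+\Delta t}-h_t)$ until $h\in L^2_{loc}(0,T;H^4)$ has been secured by a separate \emph{spatial} difference-quotient step; your scheme does not address this interleaving of space and time gains, and the ``mean-value choice of starting instant'' does not resolve it because the obstruction is the admissibility of the multiplier, not the choice of initial time. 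Finally, your schematic energy inequality with right-hand side $C_j(1+\|\dot f_t\|_{H^{j-1/2}}^2)$ is too optimistic: already for $j=1$ (Lemma~\ref{monotonisuus lemma}) there is an additional superlinear term in $\int(\pa_\sigma R_t)^2$, and for higher $j$ the constants would depend on norms of $h_t$ whose control has not yet been established at that stage of the bootstrap. The difference-quotient route sidesteps all of this by never leaving the $L^2$-in-$h$ framework.
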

\begin{proof}
As in the proof of Theorem~\ref{th:existence} we rewrite the equation on $\pa G$, see \eqref{eq on G}, thus getting
\beq \label{Cinfty1}
\frac{(1+ h_{t} k_G)}{J_{t}} \, \dot h_{t} =  \frac{1}{J_{t}} \pa_\sigma \left(\frac{1}{J_{t}} \pa_\sigma  \left( (g(\nu_{F_{t}}) k_{F_{t}}) \circ \pi_{F_{t}}^{-1} - Q_t \right) \right)\quad\text{ on }\pa G,
\eeq
where we have set $Q_t:=Q(E(u_t))\circ\pi_{F_{t}}^{-1}$. We  divide the proof in four steps.

\medskip

\noindent{\bf Step 1.}
Given $\Delta t\not=0$, let us subtract the equation above at time $t$ from the same equation at time $t+\Delta t$ and multiply both sides by $h_{t+\Delta t}-h_t$. Then, integrating by part and arguing as in the proof of \eqref{exist pt1} we get, using also Proposition~\ref{interpolation} to estimate $\|\pa_{\sigma}(h_{t+\Delta t}-h_t)\|_{L^2(\pa G)}$,
\beq \label{Cinfty2}
\begin{split}
 \frac{d}{dt} \biggl(\frac12\int_{\pa G}   \, (h_{t+\Delta t}  - h_{t})^2 \,d \Ha^1 \biggr) & +  c \int_{\pa G} |\pa_{\sigma \sigma}(h_{t+\Delta t} -h_{t})|^2\,d \Ha^1  \\
 &\leq  C \int_{\pa G} (Q_{t+\Delta t} -Q_t)^2\,d \Ha^1 + C\int_{\pa G}(h_{t+\Delta t}-h_{t})^2 \,d \Ha^1. 
\end{split}
\eeq
Fix now $\alpha\in (0,\frac12)$. Using Proposition~\ref{interpolation}  and the estimate \eqref{linear} with $F$ and $G$ replaced respectively by $F_{t+\Delta t}$ and $F_t$ we have
\begin{align*}
\|Q_{t+\Delta t} -Q_t\|_{L^2(\pa G)}
 &\leq C\|Q_{t+\Delta t} -Q_t\|_{C^{0,\alpha}(\pa G)}\leq C\|h_{t+\Delta t} -h_t\|_{C^{1,\alpha}(\pa G)} \\
& \leq C\big(\|\pa_{\sigma\sigma}(h_{t+\Delta t} -h_t)\|_{L^2(\pa G)}^{\vartheta}\|h_{t+\Delta t} -h_t\|_{L^2(\pa G)}^{1-\vartheta}+\|h_{t+\Delta t} -h_t\|_{L^2(\pa G)}\big).
\end{align*}
Inserting this inequality in \eqref{Cinfty2} we get 
\[
 \frac{d}{dt} \biggl(\frac12\int_{\pa G}   \, (h_{t+\Delta t}  - h_{t})^2 \,d \Ha^1 \biggr)  + c \int_{\pa G} |\pa_{\sigma \sigma}(h_{t+\Delta t} -h_{t})|^2\,d \Ha^1  
 \leq   C\|h_{t+\Delta t} -h_t\|_{L^2(\pa G)}^2. 
\]
Then for $\mathcal L^1$-a.e. $t_0,t_1$ with $0<t_0<t_1<T$, integrating the above inequality in $(t_0,t_1)$, we have
\[
\begin{split}
\|h_{t_1+\Delta t}  - h_{t_1} \|_{L^2(\pa G)}^2  +\int_{t_0}^{t_1}&\|\pa_{\sigma \sigma}(h_{t+\Delta t} -h_{t})\|_{L^2(\pa G)}^2\,dt \\
&\leq \|h_{t_0+\Delta t}  - h_{t_0} \|_{L^2(\pa G)}^2+C\int_{t_0}^{t_1}\|h_{t+\Delta t} -h_{t}\|_{L^2(\pa G)}^2\,dt.
\end{split}
\]
Finally, dividing both sides of this inequality by $(\Delta t)^2$, letting $\Delta t\to0$ and recalling that $h\in H^1(0,T;H^1(\pa G))$, we  conclude that for any time interval $J\subset\!\subset(0,T)$
\beq\label{Cinfty3}
\sup_{t\in J}\| \dot h_t\|_{L^2(\pa G)}^2+\int_J\|\pa_t(\pa_{\sigma \sigma}h_t)\|_{L^2(\pa G)}^2\,dt<\infty.
\eeq

\medskip

\noindent{\bf Step 2.} We start again by subtracting equation \eqref{Cinfty1} at time $t$ from the same equation at time $t+\Delta t$. We now multiply  both sides by $\pa_{\sigma\sigma}(h_{t+\Delta t}-h_t)$. Then, arguing as in the proof of \eqref{Cinfty2} we get  the following estimate
\beq \label{Cinfty4}
\begin{split}
 \frac{d}{dt} \biggl(\frac12\int_{\pa G} (\pa_\sigma(h_{t+\Delta t}  - &h_{t}))^2 \,d \Ha^1 \biggr)  +  c \int_{\pa G} |\pa_{\sigma \sigma\sigma}(h_{t+\Delta t} -h_{t})|^2\,d \Ha^1  \\
 &\leq  C \int_{\pa G} (\pa_\sigma(Q_{t+\Delta t} -Q_t))^2\,d \Ha^1 + C\int_{\pa G}(h_{t+\Delta t}-h_{t})^2 \,d \Ha^1. 
\end{split}
\eeq
As in the previous step we may estimate, using  \eqref{linear}
and Proposition~\ref{interpolation},  
\begin{align*}
\|\pa_\sigma(Q_{t+\Delta t} -Q_t)\|_{L^2(\pa G)}
& \leq
C\|\pa_\sigma(Q_{t+\Delta t} -Q_t)\|_{C^{0,\alpha}(\pa G)}\leq C\|h_{t+\Delta t} -h_t\|_{C^{2,\alpha}(\pa G)} \\
&\leq C\|\pa_{\sigma\sigma\sigma}(h_{t+\Delta t} -h_t)\|_{L^2}^{\vartheta}\|h_{t+\Delta t} -h_t\|_{L^2}^{1-\vartheta}+C\|h_{t+\Delta t} -h_t\|_{L^2}.
\end{align*}
Using this estimate and integrating \eqref{Cinfty4} in $(t_0,t_1)$ for $\mathcal L^1$-a.e. $t_0,t_1$ with $0<t_0<t_1<T$, we have
\[
\begin{split}
\|\pa_\sigma(h_{t_1+\Delta t} - h_{t_1})\|_{L^2(\pa G)}^2  &+c \int_{t_0}^{t_1}\|\pa_{\sigma \sigma\sigma}(h_{t+\Delta t} -h_{t})\|_{L^2(\pa G)}^2\,dt \\
&\leq \|\pa_\sigma(h_{t_0+\Delta t}  - h_{t_0}) \|_{L^2(\pa G)}^2+ C\int_{t_0}^{t_1}\|h_{t+\Delta t} -h_{t}\|_{L^2(\pa G)}^2\,dt 
\end{split}
\]
Divide both sides of this inequality by $(\Delta t)^2$ and recall that 
$\pa_{\sigma\sigma\sigma}h_t\in L^2(0,T;L^2(\pa G))$ and that by \eqref{Cinfty3} $\pa_t(\pa_\sigma h_t)\in L^2_{loc}(0,T;H^1(\pa G))$. Using this information and letting $\Delta t\to0$ we conclude that for every interval $J\subset\!\subset(0,T)$
\beq \label{Cinfty5}
\sup_{t\in J}\| \pa_t(\pa_\sigma h_t)\|_{L^2(\pa G)}^2+\int_J\|\pa_t(\pa_{\sigma \sigma\sigma}h_t)\|_{L^2(\pa G)}^2\,dt<\infty.
\eeq
Note that from the previous inequality and by the equation \eqref{eq on G} we have that for every interval $J\subset\!\subset(0,T)$
$$
\sup_{t\in J}\big(\|\pa_{\sigma\sigma\sigma}R_t\|_{L^2(\pa G)}+\|\pa_{\sigma\sigma\sigma}h_t\|_{L^2(\pa G)}\big)<\infty.
$$
In particular, from this inequality we deduce that
$$
\sup_{t\in J}\big(\|\pa_{\sigma}R_t\|_{C^{0,\alpha}(\pa G)}+\|h_t\|_{C^{2,\alpha}(\pa G)}\big)<\infty.
$$
In turn, since $\|\pa_\sigma Q_t\|_{C^{0,\alpha}(\pa G)}\leq C(G,\|h_t\|_{C^{2,\alpha}(\pa G)})$, the above inequality implies immediately that
\beq \label{Cinfty5.1}
\sup_{t\in J}\|h_t\|_{C^{3,\alpha}(\pa G)}<\infty
\eeq

\medskip

\noindent{\bf Step 3.} At this point we would like to continue as before, subtracting the equations \eqref{Cinfty1} at times $t+\Delta t$ an $t$ and multiplying the resulting difference by $\pa_{\sigma\sigma\sigma\sigma}(h_{t+\Delta t}-h_t)$. 
However this argument only works provided we know that $h\in L^2_{loc}(0,T;H^4(\pa G))$. 

To prove this property of $h$ we go back to equation \eqref{Cinfty1} and, denoting by $s$ the arclength on $\pa G$, we subtract the equation for $h$ from the same equation  for $h(\cdot+\Delta s)$, where $\Delta s$ is a non zero increment of the arclength. Then we multiply both sides by $\pa_{\sigma\sigma}h(\cdot+\Delta s)-\pa_{\sigma\sigma} h$ to deduce with the usual calculations that
\[
\begin{split}
 \frac{d}{dt} \biggl(\frac12\int_{\pa G} (\pa_\sigma(h_t(\cdot+& \Delta s)  - h_{t}))^2 \,d \Ha^1 \biggr)  +  c \int_{\pa G} |\pa_{\sigma \sigma\sigma}(h_t(\cdot+\Delta s) -h_{t})|^2\,d \Ha^1  \\
 &\leq  C \int_{\pa G} (\pa_\sigma(Q_t(\cdot+\Delta s) -Q_t))^2\,d \Ha^1 +C \int_{\pa G}(h_t(\cdot+\Delta s) -h_{t})^2 \,d \Ha^1. 
\end{split}
\]
As before we estimate
\begin{align*}
\|\pa_\sigma(Q_t(\cdot+\Delta s) &-Q_t)\|_{L^2(\pa G)}\leq C\|h_t(\cdot+\Delta s) -h_t)\|_{C^{2,\alpha}(\pa G)} \\
& \leq C\|\pa_{\sigma\sigma\sigma}(h_t(\cdot+\Delta s) -h_t)\|_{L^2}^{\vartheta}\|h_t(\cdot+\Delta s) -h_t\|_{L^2}^{1-\vartheta}+C\|h_t(\cdot+\Delta s) -h_t\|_{L^2}
\end{align*}
so to obtain that for $\mathcal L^1$-a.e. $t_0,t_1$ with $0<t_0<t_1<T$ 
\[
\begin{split}
\|\pa_\sigma(h_{t_1}(\cdot+\Delta s)  - & h_{t_1})\|_{L^2(\pa G)}^2  +\int_{t_0}^{t_1}\|\pa_{\sigma \sigma\sigma}(h_{t}(\cdot+\Delta s) -h_{t})\|_{L^2(\pa G)}^2\,dt \\
&\leq \|\pa_\sigma(h_{t_0}(\cdot+\Delta s)  - h_{t_0})\|_{L^2(\pa G)}^2+C\int_{t_0}^{t_1}\|h_{t}(\cdot+\Delta s) -h_{t}\|_{L^2(\pa G)}^2\,dt.
\end{split}
\]
Thus, we may conclude that for every interval $J\subset\!\subset(0,T)$
$$
\sup_{t\in J}\| \pa_{\sigma\sigma} h_t\|_{L^2(\pa G)}^2+\int_J\|\pa_{\sigma \sigma\sigma\sigma}h_t\|_{L^2(\pa G)}^2\,dt<\infty.
$$
We now use this estimate, together with the estimate \eqref{Cinfty5.1} obtained in the previous step, in order to show \eqref{Cinfty6.2} below.

 To this end,   we subtract equation \eqref{Cinfty1} at time $t$ from the same equation at time $t+\Delta t$ and multiply  both sides by $\pa_{\sigma\sigma\sigma\sigma}(h_{t+\Delta t}-h_t)$ and we obtain
\[
\begin{split}
 \frac{d}{dt} \biggl(\frac12\int_{\pa G} (\pa_{\sigma\sigma}(h_{t+\Delta t}  - &h_{t}))^2 \,d \Ha^1 \biggr)  +  c \int_{\pa G} |\pa_{\sigma \sigma\sigma\sigma}(h_{t+\Delta t} -h_{t})|^2\,d \Ha^1  \\
 &\leq  C \int_{\pa G} (\pa_{\sigma\sigma}(Q_{t+\Delta t} -Q_t))^2\,d \Ha^1 + C\int_{\pa G}(h_{t+\Delta t}-h_{t})^2 \,d \Ha^1. 
\end{split}
\]
Then using, \eqref{Cinfty5.1},  \eqref{linear}
and Proposition~\ref{interpolation},  we have
\begin{align*}
\|\pa_{\sigma\sigma}(Q_{t+\Delta t} -Q_t)\|_{L^2(\pa G)}
& \leq
C\|\pa_{\sigma\sigma}(Q_{t+\Delta t} -Q_t)\|_{C^{0,\alpha}(\pa G)}\leq C\|h_{t+\Delta t} -h_t\|_{C^{3,\alpha}(\pa G)} \\
&\leq C\|\pa_{\sigma\sigma\sigma}(h_{t+\Delta t} -h_t)\|_{L^2}^{\vartheta}\|h_{t+\Delta t} -h_t\|_{L^2}^{1-\vartheta}+C\|h_{t+\Delta t} -h_t\|_{L^2}.
\end{align*}
Then, arguing as in the proof of \eqref{Cinfty5}, we get
\beq \label{Cinfty6.2}
\sup_{t\in J}\| \pa_t(\pa_{\sigma\sigma} h_t)\|_{L^2(\pa G)}^2+\int_J\|\pa_t(\pa_{\sigma \sigma\sigma\sigma}h_t)\|_{L^2(\pa G)}^2\,dt<\infty.
\eeq
Then, arguing as in the proof of \eqref{Cinfty5.1} we have that
$$
\sup_{t\in J}\|h_t\|_{C^{4,\alpha}(\pa G)}<\infty
$$

At this point we proceed by induction, obtaining at each step first an increment in the space regularity and then the corresponding estimate with respect to time. More precisely, for every  interval $J\subset\!\subset(0,T)$ and every integer $k\geq2$ we first have that
$$
\sup_{t\in J}\| \pa_{\sigma}^k h_t\|_{L^2(\pa G)}^2+\int_J\|\pa_{\sigma }^{k+2}h_t\|_{L^2(\pa G)}^2\,dt<\infty
$$
Then from this we deduce that again for every interval $J\subset\!\subset(0,T)$
$$
\sup_{t\in J}\| \pa_t(\pa_{\sigma}^k h_t)\|_{L^2(\pa G)}^2+\int_J\|\pa_t(\pa_{\sigma }^{k+2}h_t)\|_{L^2(\pa G)}^2\,dt<\infty 
$$
and in turn that
$$
\sup_{t\in J}\|h_t\|_{C^{k+2,\alpha}(\pa G)}<\infty
$$
In conclusion this proves that $h\in W^{1,\infty}_{loc}(0,T;C^{\infty}(\pa G))$. 

\medskip

\noindent{\bf Step 4.} Let us now show the full regularity of $h$ with respect to time. As in Step 1 we fix $\Delta t\not=0$ and subtract equation \eqref{Cinfty1} from the same equation at time $t+\Delta t$. However, differently from before, we multiply both sides of this difference by  $\dot h_{t+\Delta t}-\dot h_t$.  Then, a simple use of Young's inequality and Proposition~\ref{interpolation} yields
\beq \label{Cinfty7}
\begin{split}
\int_{\pa G}  (\dot h_{t+\Delta t}  -\dot h_{t})^2 \,d \Ha^1 & \leq C \int_{\pa G} |\pa_{\sigma \sigma\sigma\sigma}(h_{t+\Delta t} -h_{t})|^2\,d \Ha^1  \\
 & +C \int_{\pa G} (\pa_{\sigma\sigma}(Q_{t+\Delta t} -Q_t))^2\,d \Ha^1 + C\int_{\pa G}(h_{t+\Delta t}-h_{t})^2 \,d \Ha^1. 
\end{split}
\eeq
Then we estimate as usual
\begin{align*}
\|\pa_{\sigma\sigma}(Q_{t+\Delta t} -Q_t)\|_{L^2}
& \leq
C\|\pa_{\sigma\sigma}(Q_{t+\Delta t} -Q_t)\|_{C^{0,\alpha}}\leq C\|h_{t+\Delta t} -h_t\|_{C^{3,\alpha}} \\
&\leq C\|\pa_{\sigma\sigma\sigma\sigma}(h_{t+\Delta t} -h_t)\|_{L^2}^{\vartheta}\|h_{t+\Delta t} -h_t\|_{L^2}^{1-\vartheta}+C\|h_{t+\Delta t} -h_t\|_{L^2}.
\end{align*}
Thus, from \eqref{Cinfty7} one gets
$$
\int_{\pa G}  (\dot h_{t+\Delta t}  -\dot h_{t})^2 \,d \Ha^1  \leq C \int_{\pa G} |\pa_{\sigma \sigma\sigma\sigma}(h_{t+\Delta t} -h_{t})|^2\,d \Ha^1  + C\int_{\pa G}(h_{t+\Delta t}-h_{t})^2 \,d \Ha^1. 
$$
Dividing this inequality by $(\Delta t)^2$ and recalling what was proved in Step 3 we conclude that for every interval $J\subset\!\subset(0,T)$
$$
\sup_{t\in J}\| \pa_{tt} h_t\|_{L^2(\pa G)}^2<\infty.
$$
Similarly, differentiating $k$ times the equation \eqref{Cinfty1} and arguing as before we conclude that indeed for every integer $k$ and for every interval $J\subset\!\subset(0,T)$
$$
\sup_{t\in J}\| \pa_{tt}(\pa_{\sigma}^kh_t)\|_{L^2(\pa G)}^2<\infty.
$$
Then we have that $h\in W^{2,\infty}_{loc}(0,T;C^{\infty}(\pa G))$. Finally, differentiating \eqref{Cinfty1} with respect to $t$ and repeating the same argument as before we end up by proving that $h\in W^{k,\infty}_{loc}(0,T;C^{\infty}(\pa G))$ for every integer $k\geq2$. This concludes the proof.
\end{proof}

\section{Asymptotic Stability} \label{sec:stability}

In this section we address the long-time behavior of the flow for a special class of initial data.
 
To this aim, we start by noticing  that if $G$ is stationary, then a standard bootstrap argument shows that in fact $G$ is of class 
$C^\infty$. Moreover, by the results in \cite{KLM} $G$ turns out to be analytic.  
Recall also that the definition of stationary set  is weaker than the notion of criticality, where one requires  the first variation to be    constant on the whole $\pa G$ (see Remark~\ref{rm:station}). 

However, the above definition  fits better in our framework, since  during the evolution there is no mass transfer from one Jordan component to the other. More precisely, denoting as before by $F_{t,i}$ the bounded open set enclosed by the $i$-th connected component $\Gamma_{F_t, i}$ of $\pa F_t$, one has that the area $|F_{t,i}|$ is preserved during the flow. Indeed, one has
\beq\label{componenti}
\frac{d}{ds}|F_{t+s,i}|_{|_{s=0}}=\int_{\Gamma_{F_t, i}} V_t\, d\Ha^1=
\int_{\Gamma_{F_t, i}} \pa_{\sigma\sigma}R_t\, d\Ha^1=0.
\eeq
We are now ready to state the main result of this section.

\begin{theorem}
\label{thmstability}
Let $G \subset\subset\Om$ be a regular strictly stable stationary set   in the sense of Definition~\ref{def:stable} and fix $M>0$, $\alpha\in (0,1)$. There exists $\delta_0>0$ with the following property:  Let  $F_0\in  \mathfrak{h}^{2,\alpha}_M(\pa G)$ be  such that
 \[
|F_0 \Delta G| < \delta_0, \qquad \text{and} \qquad \int_{\pa F_0} (\pa_\sigma R_0)^2 \, d\Ha^1 < \delta_0,
\]  
where $R_0:=g(\nu_{F_0})k_{F_0}-Q(E(u_{F_0}))$ on $\pa F_0$. 
Then the unique solution $(F_t)_{t>0}$ of the flow \eqref{flow} with intial datum $F_0$ is defined for all times $t>0$. 

Moreover $F_t \to F_{\infty}$  $H^3$-exponentially fast, where $F_\infty$ is the unique stationary set in $\mathfrak{h}^{2,\alpha}_{\sigma_1}(\pa G)$
(see Proposition~\ref{stationary}) such that $|F_{\infty, i}|=|F_{0,i}|$ for $i=1,\dots, m$.  In particular, if $|F_{0,i}|=|G_i|$ for $i=1, \dots, m$, then $F_t \to G$  $H^3$-exponentially fast. 

Here $(F_{\infty, i})_{i=1, \dots, m}$ and $(F_{0,i})_{i=1, \dots, m}$  denote the open sets enclosed by the connected components $(\Gamma_{F_\infty, i})_{i=1, \dots, m}$ of $\pa F_\infty$ and $(\Gamma_{F_0, i})_{i=1, \dots, m}$ of $\pa F_0$, respectively, numbered according to \eqref{numbered}.
\end{theorem}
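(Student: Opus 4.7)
The plan is a continuation argument showing that, under the smallness on $F_0$, the quantity
\begin{equation*}
\E(t):=\int_{\partial F_t}(\partial_\sigma R_t)^2\,d\mathcal{H}^1, \qquad R_t:=g(\nu_t)k_t-Q(E(u_t)),
\end{equation*}
decays exponentially while $F_t$ stays in $\mathfrak{h}^{2,\alpha}_{\sigma_1}(G)$. Concretely, I would introduce the maximal time $T^\ast$ on $(0,T^\ast)$ during which the flow of Theorem~\ref{th:existence} exists, $F_t\in\mathfrak{h}^{2,\alpha}_{\sigma_1}(G)$, $|F_t\triangle G|\leq 2\delta_0$ and $\E(t)\leq 2\delta_0$, and aim to strictly improve these bounds, so that Theorem~\ref{th:existence} can be iterated beyond $T^\ast$.

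The core of the proof is the energy inequality. Following Lemma~\ref{monotonisuus lemma} but now regarding $f=-Q(E(u_t))\circ\pi_{F_t}^{-1}$ as a self-consistent ``forcing'', I would obtain
\begin{equation*}
\frac{d}{dt}\E(t)+c_0\int_{\partial F_t}(\partial_{\sigma\sigma\sigma}R_t)^2\,d\mathcal{H}^1\leq C\bigl\|\partial_t(Q(E(u_t))\circ\pi_{F_t}^{-1})\bigr\|_{H^{-1/2}(\partial G)}^2+\text{l.o.t.}
\end{equation*}
Arguing exactly as in Lemma~\ref{regularity u}, the nonlocal time derivative is bounded by $C\|\partial_{\sigma\sigma\sigma}R_t\|_{L^2}^{3/4}\|\partial_\sigma R_t\|_{L^2}^{1/4}+C\|\partial_\sigma R_t\|_{L^2}$; after squaring and Young, this contributes $\varepsilon\|\partial_{\sigma\sigma\sigma}R_t\|_{L^2}^2+C_\varepsilon\E(t)$. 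Absorbing the first term into the dissipation and applying the Poincar\'e inequality $\E(t)\leq C\int(\partial_{\sigma\sigma\sigma}R_t)^2$, valid since $\partial_\sigma R_t$ and $\partial_{\sigma\sigma}R_t$ both have zero mean on each closed component of $\partial F_t$, I obtain $\E'(t)+\tilde c\,\E(t)\leq 0$, hence $\E(t)\leq \delta_0 e^{-\tilde c t}$.

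Translating the decay to $h_t$: from $\dot h_t(1+h_tk_G)/J_t=\partial_{\sigma\sigma}R_t\circ\pi_{F_t}^{-1}$ and the interpolation of Theorem~\ref{interpolation}, $\|\dot h_t\|_{L^2(\partial G)}\leq C\|\partial_{\sigma\sigma\sigma}R_t\|_{L^2}^{1/2}\|\partial_\sigma R_t\|_{L^2}^{1/2}$, which after integration in $t$ against the exponential decay gives $\int_0^{T^\ast}\|\dot h_t\|_{L^2}\,dt\leq C\sqrt{\delta_0}$. This keeps $|F_t\triangle G|$ strictly below $2\delta_0$ and shows $h_t$ is $L^2$-Cauchy. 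Meanwhile, bounding the mean of $R_t$ on each component via the $C^{2,\alpha}$-closeness to $G$ (together with $\mathcal E(t)$-control of the tangential derivative) gives a uniform $H^1(\partial F_t)$-bound on $R_t$; through $R_t=g(\nu_t)k_t-Q(E(u_t))$, Lemma~\ref{elastiset} and the curvature formula \eqref{curvature formula}, this translates into a uniform $H^3(\partial G)$-bound on $h_t$. Interpolating via \eqref{inter3} between this $H^3$-bound and the $L^2$-smallness produces $\|h_t\|_{C^{2,\alpha}(\partial G)}<\sigma_1$ strictly, closing the bootstrap and yielding $T^\ast=\infty$.

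Finally, the $L^2$-exponential convergence $h_t\to h_\infty$ combined with uniform $H^3$-bounds upgrades via interpolation to $H^3$-exponential convergence. The limit $F_\infty$ satisfies $\partial_\sigma R_\infty=0$ on each component of $\partial F_\infty$, hence $R_\infty$ is piecewise constant, so $F_\infty$ is stationary in the sense of Definition~\ref{def stationarity} by Remark~\ref{rm:station}; by \eqref{componenti} the flow preserves the component areas, so $|F_{\infty,i}|=|F_{0,i}|$, and Proposition~\ref{stationary} identifies $F_\infty$ uniquely in $\mathfrak{h}^{2,\alpha}_{\sigma_1}(G)$. The main obstacle is the energy inequality: closing it requires that the $H^{-1/2}$-control on $\partial_t(Q(E(u_t))\circ\pi_{F_t}^{-1})$ deliver a loss of strictly less than three tangential derivatives against the $R_t$-dissipation, so that the nonlocal elastic perturbation can be absorbed into the parabolic dissipation of the fourth-order part of the flow.
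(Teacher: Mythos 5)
Your overall continuation-argument skeleton (stopping time, $L^2$-Cauchy estimate via the distance integral, identification of $F_\infty$ through Proposition~\ref{stationary} and the area preservation \eqref{componenti}) matches the paper's structure, and the upgrade from $L^2$ to $H^3$ convergence via interpolation and Lemma~\ref{elastiset} is correct. But the heart of the argument — the mechanism producing exponential decay of $\E(t)$ — is wrong, and in a way that cannot be fixed within your framework.

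You propose to treat $-Q(E(u_t))\circ\pi_{F_t}^{-1}$ as a generic forcing $f$ in the energy inequality of Lemma~\ref{monotonisuus lemma}, bound $\|\dot f_t\|_{H^{-1/2}}$ as in Lemma~\ref{regularity u} by $C\|\pa_{\sigma\sigma\sigma}R_t\|_{L^2}^{3/4}\|\pa_\sigma R_t\|_{L^2}^{1/4}+C\|\pa_\sigma R_t\|_{L^2}$, and absorb. After squaring and Young's inequality this yields $\e\|\pa_{\sigma\sigma\sigma}R_t\|_{L^2}^2+C_\e\E(t)$ with $C_\e\sim\e^{-3}$. Once $\e$ is fixed small enough to absorb the first piece into the dissipation $c_0\|\pa_{\sigma\sigma\sigma}R_t\|_{L^2}^2$, the constant $C_\e$ is \emph{large} and fixed by the geometric data (it does not tend to zero as $\delta_0\to 0$). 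Poincar\'e gives $\E(t)\leq C_P\|\pa_{\sigma\sigma\sigma}R_t\|_{L^2}^2$, so your differential inequality reads $\E'(t)\leq -\bigl(\tfrac{c_0}{C_P}-C_\e\bigr)\E(t)+\text{l.o.t.}$, and there is no reason for $\tfrac{c_0}{C_P}-C_\e$ to be positive. In short, the derivative-count you flag at the end of your proposal is not the real obstacle; the unavoidably large Young constant is, and no choice of $\e$ removes it. A deeper sign that this approach cannot work: it never invokes strict stability of $G$, yet the theorem is false for stationary sets with indefinite second variation.

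The paper's proof replaces this absorption step by the \emph{exact} algebraic identity of Proposition~\ref{energia identiteetit}. There, the term $\int_{\pa F_t}\pa_{\sigma\sigma}R_t\,\pa_t\bigl(Q(E(u_t))\bigr)\,d\Ha^1$ is shown — via the linearized elasticity system \eqref{eq u dot} with test function $\dot u_t$ — to equal exactly $2\int_{\Omega\setminus F_t}Q(E(\dot u_t))\,dx$, so that
\begin{equation*}
\frac{d}{dt}\left(\frac{1}{2}\int_{\pa F_t}(\pa_\sigma R_t)^2\,d\Ha^1\right)=-\pa^2 J(F_t)[\pa_{\sigma\sigma}R_t]-\frac12\int_{\pa F_t}k_t(\pa_\sigma R_t)^2\pa_{\sigma\sigma}R_t\,d\Ha^1.
\end{equation*}
The coercivity of $\pa^2J$ on $\tilde H^1$ from Lemma~\ref{lemma:j2>0near} then gives $-\pa^2J(F_t)[\pa_{\sigma\sigma}R_t]\leq-\tfrac{m_0}{2}\|\pa_{\sigma\sigma}R_t\|_{H^1}^2$, and the genuinely cubic remainder is where interpolation and the smallness $\E(t)\leq 2\delta_0$ earn their keep. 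This is what produces the strict Lyapunov inequality $\E'(t)\leq -m_1\E(t)$. Your argument needs to be replaced by this second-variation identity; the rest of your write-up (control of $D(F_t)$, stopping-time bootstrap, identification of the limit) can be retained essentially as is.
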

\begin{remark}\label{rm:precise}
In the previous statement by $H^3$ exponential convergence of $F_t$ to $F_\infty$ we mean precisely the following: 
writing $\pa F_t:=\{x+\tilde h_t(x)\nu_{F_\infty}(x):x\in \pa F_\infty\}$, we have 
\[
\|\tilde h_t\|_{H^3(\pa F_\infty)} \leq C e^{-c t}.
\]
for suitable constants $C, c>0$.
\end{remark}
\par\noindent For an example of strictly stable set $G$ to which Theorem~\ref{thmstability} applies we refer to \cite{CJP}.

In order to proof the theorem, we need the following preliminary energy identities.
\begin{proposition}
\label{energia identiteetit}
Let $(F_t)_{t \in (0,T)}$ solve \eqref{flow}. Then 
we have:
$$
\frac{d}{dt}  J(F_t) = - \int_{\pa F_t} (\pa_\sigma R_t)^2 \, d\Ha^1 
$$
and
$$
\frac{d}{dt} \left(\frac{1}{2} \int_{\pa F_t} (\pa_\sigma R_t)^2 \, d\Ha^1 \right) = - \pa^2 J[\pa_{\sigma\sigma}R_t] - \frac12 \int_{\pa F_t} k_t \, (\pa_\sigma R_t)^2 \, \pa_{\sigma \sigma}R_t\, d \Ha^1.  
$$
\end{proposition}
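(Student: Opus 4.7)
The first identity follows immediately from the first variation formula. Since $V_t=\pa_{\sigma\sigma}R_t$ has zero mean on each connected component of $\pa F_t$ by \eqref{componenti}, the flow \eqref{flow} is admissible in the sense of Definition~\ref{def:admissibleX} (cf.\ Remark~\ref{rm:mn}). Applying \eqref{eq:J'} with $\psi=V_t$ and integrating by parts on the closed components $\Gamma_{F_t,i}$ yields
\[
\frac{d}{dt}J(F_t)=\int_{\pa F_t}R_t\,\pa_{\sigma\sigma}R_t\,d\Ha^1=-\int_{\pa F_t}(\pa_\sigma R_t)^2\,d\Ha^1.
\]

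For the second identity I plan to reuse the purely kinematic identity \eqref{app5} from the proof of Lemma~\ref{monotonisuus lemma}, which was derived using only that the normal velocity is $\pa_{\sigma\sigma}R_t$ and so holds verbatim for our $R_t=k_{\vphi,t}-Q(E(u_t))$:
\[
\frac{d}{dt}\Bigl(\tfrac12\int_{\pa F_t}(\pa_\sigma R_t)^2\,d\Ha^1\Bigr)=-\int_{\pa F_t}\Bigl[\pa_{\sigma\sigma}R_t\,\dot R_t+(\pa_{\sigma\sigma}R_t)^2\pa_{\nu_t}R_t+\tfrac12 k_t(\pa_\sigma R_t)^2\pa_{\sigma\sigma}R_t\Bigr]d\Ha^1,
\]
with $\dot R_t$ the partial time derivative at a fixed spatial point. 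The cubic $k_t$-term already matches the statement, so the task reduces to showing that the first two boundary integrals sum to $-\pa^2J(F_t)[\pa_{\sigma\sigma}R_t]$.

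By \eqref{app1}--\eqref{app3} together with the definition \eqref{eq u dot} of $\dot u_t$, I would substitute $\pa_{\nu_t}R_t=-g(\nu_t)k_t^2-\pa_{\nu_t}Q(E(u_t))$ and $\dot R_t=-\pa_\sigma(g(\nu_t)\pa_{\sigma\sigma\sigma}R_t)-\C E(u_t):E(\dot u_t)$. A single tangential integration by parts converts the $\dot k_{\vphi,t}$ contribution into $\int_{\pa F_t}g(\nu_t)(\pa_{\sigma\sigma\sigma}R_t)^2\,d\Ha^1$, while the $\pa_{\nu_t}R_t$ term supplies the boundary quadratic contributions $\int g(\nu_t)k_t^2(\pa_{\sigma\sigma}R_t)^2$ and $\int\pa_{\nu_t}Q(E(u_t))(\pa_{\sigma\sigma}R_t)^2$ that appear in the definition \eqref{eq:pa2J} of $\pa^2J$.

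The decisive step, and the one I expect to be the main obstacle, is the elastic identification
\[
\int_{\pa F_t}\pa_{\sigma\sigma}R_t\,\C E(u_t):E(\dot u_t)\,d\Ha^1=2\int_{\Om\setminus F_t}Q(E(\dot u_t))\,dx,
\]
which turns the boundary elastic contribution into the bulk term $2\int Q(E(u_\psi))$ of \eqref{eq:pa2J}. I would establish it by using the Neumann condition $\C E(u_t)\nu_t=0$ on $\pa F_t$ to replace $E(\dot u_t)$ by $\nabla_\tau\dot u_t$ in the integrand, then integrating tangentially by parts (the boundary contribution vanishing again by the same Neumann condition), and finally invoking the weak formulation \eqref{eq u dot} with test function $\varphi=\dot u_t$. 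After this identification $\dot u_t$ plays the role of $u_\psi$ for $\psi=\pa_{\sigma\sigma}R_t$ in \eqref{eq u dot2}, and assembling all four contributions reproduces exactly $-\pa^2J(F_t)[\pa_{\sigma\sigma}R_t]$, completing the proof.
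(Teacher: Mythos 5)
Your proposal is correct and follows essentially the same route as the paper: the first identity via the first variation formula \eqref{eq:J'} applied with $\psi = \pa_{\sigma\sigma}R_t$ and one tangential integration by parts on the closed curve; the second by re-running the computation behind \eqref{app5}--\eqref{app6} with $f_t\circ\pi_G$ replaced by $-Q(E(u_t))$, and then identifying the boundary integral $\int_{\pa F_t}\pa_{\sigma\sigma}R_t\,\C E(u_t):E(\dot u_t)$ with the bulk term $2\int_{\Om\setminus F_t}Q(E(\dot u_t))$ by exploiting the Neumann condition $\C E(u_t)[\nu_t]=0$ and the weak formulation \eqref{eq u dot} tested against $\dot u_t$. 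The remark that admissibility is used for the first identity is superfluous (one only needs \eqref{eq:J'} and closedness of $\pa F_t$), but this does not affect the argument.
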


\begin{proof}
The first identity  follows immediately recalling  that $R_t = g(\nu_t)k_t - Q(E(u_t))$ is the first variation of the energy at $J(F_t)$, and thus  
\[
\frac{d}{dt} J(F_t) = \int_{\pa F_t} R_t (X_t \cdot \nu_t) \, d \Ha^1 =  \int_{\pa F_t} R_t\, \pa_{\sigma \sigma}R_t\, d \Ha^1 = -  \int_{\pa F_t} (\pa_{\sigma}R_t)^2 \, d \Ha^1.
\]

For the second identity we note that  the calculations leading to \eqref{app6} still apply  with $f_t \circ \pi$ replaced by $-Q(E(u_t))$ on $\pa F_t$.  Hence we have
\beq
\label{app7}
\begin{split}
\frac{d}{dt} \left(\frac{1}{2} \int_{\pa F_{t}} (\pa_\sigma R_t)^2   \, d \Ha^1\right)  = &-  \int_{\pa F_{t}}  g(\nu_t) (\pa_{\sigma \sigma \sigma}R_t)^2\, d \Ha^1  + \int_{\pa F_{t}}   \pa_{\sigma \sigma }R_t \frac{\pa}{\pa t} \left( Q(E(u(\cdot,t)) \right) \, d \Ha^1\\ 
&+   \int_{\pa F_{t}}   g(\nu_t) k_t^2  (\pa_{\sigma \sigma}R_t)^2 \, d \Ha^1  +  \int_{\pa F_{t}}     \pa_{\nu_t} (Q(E(u_t)))  (\pa_{\sigma \sigma}R_t)^2 \, d \Ha^1 \\
  &-  \frac{1}{2}   \int_{\pa F_{t}}   k_t \,(\pa_{\sigma}R_t)^2 \pa_{\sigma \sigma}R_t \, d \Ha^1.
\end{split}
\eeq
In order to conclude, we need  to show  that 
$$
\int_{\pa F_{t}}   \pa_{\sigma \sigma }R_t  \frac{\pa}{\pa t} \left( Q(E(u(\cdot,t)) \right) \, d \Ha^1 = 2 \int_{\Omega \setminus  F_{t}} Q(E(\dot u_t)) \, dx
$$
to recognize the quadratic form $ - \pa^2 J[\pa_{\sigma\sigma}R_t]$ in the four first terms of \eqref{app7}. 
To this aim, observe that since $\C E(u_t)[\nu_t] = 0$ on $\pa F_t$, we have
\[
\begin{split}
\int_{\pa F_{t}}   \pa_{\sigma \sigma }R_t &\frac{\pa}{\pa t} \left( Q(E(u(\cdot,t)) \right) \, d \Ha^1 = \int_{\pa F_{t}}   R_{\sigma \sigma }  \C E(u_t) : E(\dot u_t) \, d \Ha^1 =   \int_{\pa F_{t}}   R_{\sigma \sigma }  \C E(u_t) : D \dot u_t \, d \Ha^1 \\
&= \int_{\pa F_{t}}   \pa_{\sigma \sigma }R_t  \C E(u_t) : D_\tau \dot u_t \, d \Ha^1 = - \int_{\pa F_{t}} \Div_\tau (\pa_{\sigma \sigma }R_t  \C E(u_t) ) \cdot \dot u_t   \, d \Ha^1\\
&=2 \int_{\Omega \setminus  F_{t}} Q(E(\dot u_t)) \, dx, 
\end{split}
\]
where the last equality follows by choosing $\varphi = \dot u_t$ as a test function in the equation~\eqref{eq u dot}.~\end{proof}

\begin{proof}[\textbf{Proof of Theorem \ref{thmstability}.}]
 Throughout the proof,   $C$ will denote a constant depending only on the $C^{2,\alpha}$-bounds on the boundary of the set $G$.  Here we always assume that $\alpha < 1/2$ and the value of $C$  may change from line to line.   
For any set $F \in \mathfrak{h}^{2,\alpha}_M(G)$  consider  
 \beq\label{D(F)0}
 D(F):=\int_{F\Delta G}\mathrm{dist\,}(x, \pa G)\, dx
 \eeq
 and note that 
 $$
 |F\Delta G|\leq C\|h_F\|_{L^2(\pa G)}\leq  C \sqrt{D(F)}\,
 $$
for constants depending only on $G$. Recall that  $h_F$ is the function such that 
$$
\pa F = \{ x +h_F(x)\nu_G(x) : x \in \pa G \}.
$$

For every  $\e_1>0$ sufficiently small, there exists $\de_1\in (0,1)$ so small that for any set $F\in \mathfrak{h}^{2,\alpha}_M(G)$ the following implications hold true:
 \beq\label{de01}
 F\in \mathfrak{h}^{2,\alpha}_M(G)\text{ and } D(F)\leq \de_1\Longrightarrow \|h_F\|_{C^{1,\alpha}(\pa G)}\leq \frac{\e_1}2\,,
 \eeq  
 and  
  \beq\label{de02}
  \|h_F\|_{C^{1,\alpha}(\pa G)}\leq \e_1\text{ and } \int_{\pa F} (\pa_\sigma R_F)^2\, d \Ha^1 \leq 1\Longrightarrow \|h_F\|_{C^{2,\alpha}(\pa G)}\leq \omega(\e_1)\leq\sigma_1\land M,
 \eeq  
where $\omega$ is a positive non-decreasing function such that $\omega(\e_1)\to 0$ as $\e_1\to 0^+$, and $\sigma_1$ is the constant provided by Proposition~\ref{stationary}. Here $R_F$ stands, as usual,  for $g(\nu_F)k_F-Q(E(u_F))$ on $\pa F$. 

 Fix $\e_1$, $\de_1\in (0,1)$ satisfying \eqref{de01} and \eqref{de02} and choose an initial set $F_0\in \mathfrak{h}^{2,\alpha}_M(G)$ such that
 \beq\label{initial}
 D(F_0)\leq \delta_0\qquad\text{and}\qquad \int_{\pa F_0} (\pa_\sigma R_0)^2\, d \Ha^1 \, dx \leq \delta_0\,,
 \eeq
where the choice of  $\delta_0 < \delta_1$ will be made later. Here, we denote $R_0$ instead of $R_{F_0}$. 

 Let $(F_t)_{t\in (0, T(F_0))}$ the unique classical solution of the  flow \eqref{flow} provided 
 by Theorem~\ref{th:existence}. Here $T(F)\in (0,+\infty]$ stands for  the maximal time of existence of the classical  solution starting from $F$.  By the same theorem, there exists $\de>0$ and $T_0>0$ such that 
 \beq\label{dalbasso}
 T(F)\geq T_0\quad\text{for  all $F\subset\subset\Om$  s.t. $\|h_F\|_{L^2(\pa G)}\leq \de$ and $\|h_F\|_{H^{3}(\pa G)}\leq 1$.}
 \eeq
Without loss of generality, in what follows we may also assume $\de_1$ to be so small that   
$ D(F)\leq \de_1$ implies   $\|h_F\|_{L^2(\pa G)}\leq \de$, with $\de$ as in \eqref{dalbasso}.

We now split the rest of the proof into two steps. 

\medskip 

\noindent {\bf Step 1.}{\it (Stopping-time)} Let $\bar t\leq T(F_0)$ be the maximal time such that 
\beq\label{Tprimo}
\|h_t\|_{C^{1,\alpha}(\pa G)}<\eps_1\quad\text{and}\quad \int_{\pa F_t}  (\pa_\sigma R_t)^2\, d \Ha^1< 2\delta_0.
\quad\text{for all $t\in (0, \bar t)$,}
\eeq
 Note that such a maximal time is well defined in view of \eqref{de01} and \eqref{initial}. We claim that  by taking   $\eps_1$ and $\de_0$ smaller if needed, we have $\bar t=T(F_0)$.
To this aim, assume by contradiction that $\bar t< T(F_0)$. Then,   
\[
\|h_{\bar t}\|_{C^{1,\alpha}(\pa G)}= \eps_1 \quad\text{or}\quad \int_{\pa F_{\bar t}} (\pa_\sigma R_{\bar t})^2 \, d \Ha^1 =2\delta_0
\]
We  split  the proof into steps, according to the two alternatives above.

\vspace{5pt}
{\bf Step 1-(a).}  Assume that 
\beq\label{step1a}
\int_{\pa F_{\bar t}} (\pa_\sigma R_{\bar t})^2 \, d \Ha^1 =2\delta_0.
\eeq

Since  \eqref{de02} holds for $h_t$ for every $t \in (0,\bar t)$ then by Lemma ~\ref{lemma:j2>0near} (and the fact that $\sigma_1<\sigma_0$) we get  
\[
J(F_t)[\pa_{\sigma \sigma} R_t] \geq \frac{m_0}{2} \|\pa_{\sigma \sigma} R_t\|_{H^1(\pa F_t)}^2. 
\]
Therefore by Proposition \ref{energia identiteetit} we get
\beq \label{from_stability}
\begin{split}
\frac{d}{dt} \biggl(\frac{1}{2} \int_{\pa F_t} (\pa_\sigma R_t)^2& \, d\Ha^1 \biggr) = -J(F_t)[\pa_{\sigma \sigma} R_t] - \frac12 \int_{\pa F_t} k_t \, (\pa_\sigma R_t)^2 \, \pa_{\sigma \sigma}R_t\, d \Ha^1\\
&\leq  -\frac{m_0}{2} \|\pa_{\sigma \sigma} R_t\|_{H^1(\pa F_t)}^2  +C \int_{\pa F_t}  (\pa_\sigma R_t)^2 \, |\pa_{\sigma \sigma}R_t| \, d \Ha^1\\
&\leq  -\frac{m_0}{2} \int_{\pa F_t} (\pa_{\sigma \sigma\sigma} R_t)^2 d \Ha^1 + C \int_{\pa F_t}   |\pa_\sigma R_t|^3  +  |\pa_{\sigma \sigma}R_t|^3 \, d \Ha^1.
\end{split}
\eeq
In turn, by Proposition \ref{interpolation} and the Poincar\'e Inequality we get
\[
\begin{split}
\|\pa_\sigma R_t \|_{L^3(\pa F_t)}^3 &\leq C \, \|\pa_{\sigma \sigma \sigma}R_t \|_{L^2(\pa F_t)}^{\frac14}\|\pa_{\sigma}R_t \|_{L^2(\pa F_t)}^{\frac{11}{4}} \\
 &\leq C \|\pa_{\sigma \sigma \sigma}R_t \|_{L^2(\pa F_t)}^{2} \|\pa_{\sigma}R_t \|_{L^2(\pa F_t)}\\
&\leq C \sqrt{\delta_0}  \|\pa_{\sigma \sigma \sigma}R_t \|_{L^2(\pa F_t)}^{2},
\end{split}
\]
where the last inequality follows from \eqref{Tprimo}. Similarly we get
\[
\begin{split}
\|\pa_{\sigma \sigma}R_t\|_{L^3(\pa F_t)}^3 &\leq C \, \|\pa_{\sigma \sigma\sigma}R_t \|_{L^2(\pa F_t)}^{\frac74}\|\pa_{\sigma }R_t \|_{L^2(\pa F_t)}^{\frac{5}{4}} \\
&\leq C \sqrt{\delta_0}  \|\pa_{\sigma \sigma\sigma}R_t \|_{L^2(\pa F_t)}^{2}.
\end{split}
\]
Therefore, choosing $\delta_0$   small enough,  we deduce from \eqref{from_stability} that   
\[
\begin{split}
\frac{d}{dt} \biggl(\frac{1}{2} \int_{\pa F_t} (\pa_\sigma R_t)^2 \, d\Ha^1 \biggr) &\leq  \bigl( -\frac{m_0}{2} +  C \sqrt{\delta_0}\bigr) \int_{\pa F_t} (\pa_{\sigma \sigma\sigma} R_t)^2 \, d \Ha^1\\
&\leq -\frac{m_0}{4}  \int_{\pa F_t} (\pa_{\sigma \sigma\sigma} R_t)^2 \, d \Ha^1 \\
&\leq -m_1   \int_{\pa F_t} (\pa_{\sigma} R_t)^2 \, d \Ha^1
\end{split}
\]
for all $t \leq \bar t$ and for some $m_1>0$. Note that the last inequality above follows from the Poincar\'e Inequality. Integrating the above differential inequality   implies
\beq \label{exponential}
 \int_{\pa F_t} (\pa_\sigma R_t)^2 \, d\Ha^1 \leq e^{-m_1 t}  \int_{\pa F_0} (\pa_\sigma R_0)^2 \, d\Ha^1 \leq \delta_0\, e^{-m_1 t} 
\eeq
for every $t \leq \bar t$.  This contradicts \eqref{step1a}. 

\vspace{5pt}
{\bf Step 1-(b).}  Assume now that 
\beq\label{step1b}
\|h_{\bar t}\|_{C^{1,\alpha}(\pa F)}=\e_1\,.
\eeq

By  \eqref{stimaf3} we have that 
\beq\label{leading2}
\frac{d}{dt} D(F_t) \leq P(F_t)^{\frac12} \left( \int_{\pa F_t} (\pa_\sigma R_t)^2 \, d\Ha^1 \right)^{\frac12},
\eeq
where $D(F_t)$ is defined in \eqref{D(F)0}. Therefore we may use \eqref{exponential} to estimate 
\beq \label{L2 decay}
\frac{d}{dt} D(F_t) \leq C  \sqrt{\delta_0 e^{-m_1  t}} \, 
\eeq
for every $t \leq \bar t$. This implies 
\[
D(F_t)  \leq D(F_0) +  C\sqrt{\delta_0} \leq  C\sqrt{\delta_0}
\]
for  every $t \leq \bar t$.  We may choose  $\delta_0$ so small  enough  the above estimate implies  $D(F_t) \leq \delta_1$ and, in turn, by   
 \eqref{de01},  $\|h_{t}\|_{C^{1,\alpha}(\pa F)}\leq \frac{\eps_1}{2}\,$ for every $t \leq \bar t$. This contradicts \eqref{step1b}. 

\medskip

\noindent {\bf Step 2.}{\it (Global-in-time  existence and convergence)} By the previous step we have that as long as the flow is defined, i.e., over $(0,T(F_0))$, the estimates \eqref{Tprimo} hold.  In turn,  by taking $\e_1$ (and $\de_1$, $\de_0$) smaller if needed,  we have    $\|h_t\|_{L^2(\pa G)} \leq \de$ and $\|h_t\|_{H^3(\pa G)} \leq 1$   for all  $t\in (0,T(F_0))$. By \eqref{dalbasso} and a standard continuation argument,  we deduce that  $(F_t)_{t}$ is defined for all times, i.e.,  $T(F_0) = \infty$. 

From \eqref{exponential} we also deduce that 
\beq \label{exponential2}
\int_{\pa F_t} (\pa_\sigma R_t)^2 \, d\Ha^1 \leq e^{-m_1 t}  \int_{\pa F_0} (\pa_\sigma R_0)^2 \, d\Ha^1 \leq \delta_0\, e^{-m_1 t} 
\eeq
 for all $t >0$, and in turn,  by  \eqref{de02} we have 
\beq \label{exponential3}
\|h_t\|_{C^{2,\alpha}(\pa G)}\leq M 
\eeq
for all $t>0$.  Therefore, we deduce that there exists $h_\infty \in C^{2,\alpha}(\pa G)$ and a sequence $t_n\to+\infty$ such that 
\beq\label{tenne}
h_{t_n} \to h_\infty\qquad\text{ in $C^{2,\beta}(\pa G)$ for all $\beta < \alpha$.}
\eeq
 Moreover, by \eqref{exponential2} we have $\pa_\sigma R_{\infty}=0$, and thus, the set $F_\infty\in \mathfrak{h}^{2,\alpha}_M(G)$ such that
\[
\pa F_\infty = \{ x + h_\infty(x) \nu_G(x) : x \in \pa G\}
\]  
is stationary.  Recall that for every $t\in [0, +\infty]$, $(\Gamma_{F_t,i})_{i=1,\dots, m}$ denote the connected components of $\pa F_t$, 
numbered according to \eqref{numbered}. Denote also as usual by $F_{t,i}$ the bounded open set enclosed by $\Gamma_{F_t,i}$.
Since $|F_{t,i}|=|F_{i,0}|$ for every $t>0$ by \eqref{componenti}, taking also into account \eqref{de02} and Proposition~\ref{stationary}, we deduce that in fact $F_\infty$ is the unique stationary set in $\mathfrak{h}^{2,\alpha}_{\sigma_1}(\pa G)$
 such that $|F_{\infty, i}|=|F_{0,i}|$ for $i=1,\dots, m$.

It remains to show that the whole flow exponentially converges to $F_\infty$.
To this aim, define 
$$
D_\infty(E):=\int_{E\Delta F_\infty}\mathrm{dist\,}(x, \pa F_\infty)\, dx\,.
$$
The  same calculations and arguments leading to \eqref{leading2}  and \eqref{L2 decay}  show that 
\beq\label{step6}
\frac{d}{dt} D_\infty(F_t) \leq P(F_t)^{\frac12} \left( \int_{\pa F_t} (\pa_\sigma R_t)^2 \, d\Ha^1 \right)^{\frac12}\leq C  \sqrt{\delta_0 e^{-m_1  t}}
\eeq
for all $t>0$. From this inequality it is easy to deduce that $\lim_{t\to +\infty} D_\infty(F_t)$ exists. Thus, by \eqref{tenne},  
$D_\infty(F_t)\to 0$ as $t\to +\infty$. In turn, integrating \eqref{step6} and writing $\pa F_t=\{x+\tilde h_t(x)\nu_{F_\infty}(x): x\in \pa F_\infty\}$ we get
$$
\|\tilde h_t\|_{L^2(\pa F_\infty)}^2\leq C D_\infty(F_t)\leq\int_t^{+\infty}C  \sqrt{\delta_0 e^{-m_1  s}}, ds\leq 
C  \sqrt{\delta_0 e^{-m_1  t}}\,.
$$
Since $(\tilde h_t)_{t>0}$ are  bounded in $C^{2,\alpha}(\pa G)$  by \eqref{exponential3}, we obtain by the above estimate together with standard interpolation that also $\|\tilde h_t\|_{C^{2,\beta}(\pa G)}\to 0$  exponentially fast to zero for $\beta < \alpha$. Finally, using also \eqref{exponential2} and Lemma~\ref{elastiset} (with $G=F_\infty$), we deduce  that $\|\tilde h_t\|_{H^3(\pa G)} \to 0$ exponentially fast.  
\end{proof}

\section{Periodic graphs}\label{sec:graphs}

In this section we briefly describe how our main results read in the context of evolving periodic graphs.

In this framework, given a (sufficiently regular) non-negative $\ell$-periodic function $h:[0,\ell]\to [0,+\infty)$, the free energy associated with it  reads
\begin{equation}\label{functional}
J(h):=\int_{\Omega_{h}}Q(E(u_h))\,dx+\int_{\Gamma_{h}}\vphi(\nu_{\Om_h})\,d{\mathcal{H}}^{1},%
\end{equation}
where $x=(x_1,x_2)\in\mathbb{R}^{2}$,  $\Gamma_{h}$ denotes the graph of $h$, $\Omega_{h}$ is the subgraph of $h$, i.e.,
\[
\Omega_{h}:=\{(x_1,x_2)\in(0,\ell  )\times\mathbb{R}:\,0<x_2<h(x_1)\},
\]
and $u_h$ is the elastic equilibrium in $\Om_h$, namely the solution of  the elliptic system
\beq\label{leiintro}
\begin{cases}
	\Div\,\C E(u_h)=0 \quad \text{in $\Om_{h}$},\\
	\C E(u_h)[\nu_{\Om_h}]=0 \quad \text{on $\Gamma_h$,}\\
	\nabla u_h(\cdot, x_2)  \quad \text{ is $\ell$-periodic,}\\
	 u(x_1,0)=e_0(x_1,0)\,,
\end{cases}
\eeq
for a suitable fixed constant $e_0\neq 0$.  As mentioned already in the introduction, the above energy relates to a variational model for epitaxial growth: The graph $\Gamma_h$ describes the (free) profile of the elastic films, which occupies the region $\Om_h$ and is grown on a (rigid) and much thicker substrate, while the \emph{mismatch strain} constant $e_0$ appearing in the Dirichlet condition for $u_h$ at the interface $\{x_1=0\}$ between film and substrate measures the mismatch between the characteristic atomic distances in the lattices of the two materials.   
  In this framework, the (local) minimizers of \eqref{functional} under an area constraint on $\Om_h$ describe the equilibrium configurations of epitaxially strained elastic films, see \cite{FFLM, FFLM2, FFLM3, FM09} and the reference therein. 

In the context of periodic graphs, given an initial $\ell$-periodic profile $\bar h\in H^3(0,\ell)$ (in short $\bar h\in H^3_{\rm per}(0,\ell)$), we look for a solution $(h_t)_{t\in [0,T)}$ of the following problem:
\beq\label{flow2}
\begin{cases}
	\frac 1{J_t}\dot h_t=\left(g(\nu_t)k_t+Q(E(u_t))\right)_{\sigma\sigma}
	& \text{on $\Gamma_{h_t}$ and for all $t\in(0, T)$,}\\
		h_t \text{ is $\ell$-periodic}& \text{for all $t\in(0, T)$,}\\
	h_0=\bar h\,,
\end{cases}
\eeq
where we set 
$J_t:=\sqrt{1+\left|\tfrac{\partial h_t}{\partial x_1} \right|^2},
$  
 $u_t$ stands for the solution of  \eqref{leiintro}, with $\Om_{h_t}$ in place of $\Om_h$, and we wrote $\nu_t$, $k_t$ instead of $\nu_{\Om_{h_t}}$ and $k_{\Om_{h_t}}$, respectively.  Note that in the first equation of \eqref{flow2} we have $+Q(E(u_t))$ instead of $-Q(E(u_t))$. This is due to the fact that in 
 \eqref{functional}  the vector $\nu_{\Om_h}$ now point outwards with respect to the elastic body.

Although the setting is slightly different from that of the previous sections,  the short-time existence  and regularity theory of Section~\ref{sec:existence} clearly extends also to the present situation, with the same arguments. In this way we improve upon the results of \cite{FFLM2}, showing that there is no need of a curvature regularization in the case where the anisotropy 
$\vphi$ is convex and satisfies the ellipticity condition \eqref{ellipticity}. Also the stabilty analysis of Section~\ref{sec:stability} goes through without changes, thus showing that strictly stable stationary $\ell$-periodic configurations are  $H^3$-exponentially stable (in the sense made precise by Remark~\ref{rm:precise}). 

In the case of flat configurations, that is, of constant profiles $h\equiv a$ for some $a>0$, and when $Q$ is of the form
$$
Q(E):=\mu|E|^2+\frac{\lambda}2(\mathrm{trace\,} E)^2
$$
for some constants $\mu>0$ and $\lambda>-\mu$ (the so called \emph{Lam\'e coefficients}), the  relation between the $a$, $\mu$, $\lambda$, $\ell$, and $e_0$ (see \eqref{leiintro}) that guarantees the strict stability of flat configuration $h\equiv a$ with respect to $\ell$-periodic perturbations is analytically determined.  For the reader's convenience, we recall the results. Consider the  
 {\em Grinfeld function} $K$ defined by
$$
K(s):=\max_{n\in\N}\frac{1}{n}H(ns)\,, \quad s\geq0\,,
$$
where
$$
H(s):=\frac{y+(3-4\nu_p)\sinh s\cosh s}{4(1-\nu_p)^2+s^2+(3-4\nu_p)\sinh^2s}\,,
$$
and $\nu_p$ is the {\em Poisson modulus} of the elastic material, i.e.,  
$\nu_p: =\frac{\lambda}{2(\lambda+\mu)}$\,.

It turns out that $K$ is strictly increasing and continuous,  $K(s)\leq Cs$, and  $\displaystyle\lim_{s\to +\infty}K(s)=1$, for some positive constant $C$, see \cite[Corollary~5.3]{FM09}.
Set $\mathbf{e_1}:=(1,0)$ and  $\mathbf{e_2}:=(0,1)$.
 Combining \cite[Theorem~2.9]{FM09} and \cite[Theorem~2.8]{Bo0} with the results of the previous section, we obtain the following theorem.
\begin{theorem}\label{th:2dliapunov}
  Let $\dloc:(0,+\infty)\to (0,+\infty]$ be defined as $\dloc (\ell):=+\infty$,  if $0<\ell\leq \frac{\pi}{4}\frac{(2\mu+\lambda)\partial_{\mathbf{e_1}\mathbf{e_1}}\vphi(\mathbf{e_2})}{e_0^2\mu(\mu+\lambda)}$, and  as the solution of  
$$
K\Bigl(\frac{2\pi \dloc(\ell)}{\ell}\Bigr)=\frac{\pi}{4}\frac{(2\mu+\lambda)\partial_{\mathbf{e_1}\mathbf{e_1}}\vphi(\mathbf{e_2})}{e_0^2\mu(\mu+\lambda)}\frac1\ell\,,
$$
otherwise.
Then  $h\equiv a$ is an $\ell$-periodic strictly stable stationary configuration for \eqref{functional}  if and only if 
 $0<a<\dloc(\ell)$.
In particular, for all  $a\in (0,\dloc(\ell))$  there exists $\de>0$ such that if $\|\bar h-a\|_{H^3_{\rm per}(0,\ell)}\leq \de$ and 
$|\Om_{\bar h}|=a\ell$, then the unique solution $(h_t)_t$ of \eqref{flow2} is defined for all times and satisfies
$$
\|h_t-a\|_{H^3_{\rm per}(0,\ell)}\leq C e^{-c t} \qquad\text{for all $t>0$,}
$$
for suitable constants $C, c>0$.
\end{theorem}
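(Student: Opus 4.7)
The plan is to decouple the statement into two independent ingredients: (i) the \emph{analytic characterization} of strict stability for a flat profile $h\equiv a$, which is of linear-algebra nature and is carried out in \cite{FM09,Bo0}, and (ii) the \emph{dynamical consequence}, namely $H^3$-exponential asymptotic stability, which comes from transporting the general stability result of Section~\ref{sec:stability} to the periodic graph setting as indicated in the paragraph preceding the theorem.

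For step (i) I would first check that any flat profile $h\equiv a$ is stationary. Since $k\equiv 0$ and the unique solution of \eqref{leiintro} over the strip $\Omega_a=(0,\ell)\times(0,a)$ with the given data is the affine field $u_a(x_1,x_2)=(e_0x_1,0)$, both $g(\nu)k$ and $Q(E(u_a))$ are constant on $\Gamma_a$, so stationarity is immediate. Next, for the second variation I would use the formula in \eqref{eq:pa2J}: on $\Gamma_a$ one has $k=0$, $\nu=\mathbf e_2$, and $\partial_{\nu}Q(E(u_a))=0$ because $u_a$ is affine, so
\[
\partial^2 J(h_a)[\psi]=g(\mathbf e_2)\int_0^\ell(\psi')^2\,dx_1-2\int_{\Omega_a}Q(E(u_\psi))\,dx,
\]
with $g(\mathbf e_2)=\partial_{\mathbf e_1\mathbf e_1}\vphi(\mathbf e_2)$. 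I would then decompose any $\psi\in\tilde H^1_{\mathrm{per}}(0,\ell)$ in Fourier modes and solve the Lam\'e system \eqref{eq u dot2} on the strip mode by mode with explicit separation of variables; the elastic energy of the $n$-th mode is precisely what produces the factor $H(2\pi n a/\ell)$. Optimising over $n$ and rewriting the resulting inequality in terms of $K(s)=\max_n\frac1n H(ns)$ converts strict positivity of $\partial^2 J(h_a)$ on $\tilde H^1_{\mathrm{per}}$ into the scalar inequality
\[
K\!\Bigl(\tfrac{2\pi a}{\ell}\Bigr)<\tfrac{\pi}{4}\tfrac{(2\mu+\lambda)\,\partial_{\mathbf e_1\mathbf e_1}\vphi(\mathbf e_2)}{e_0^2\mu(\mu+\lambda)}\,\tfrac1\ell,
\]
and since $K$ is strictly increasing and continuous this is equivalent to $a<\dloc(\ell)$, with the two cases in the definition accounting for whether the right-hand side is $\ge 1=\lim_{s\to\infty}K(s)$ or not. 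The details of this Fourier/Grinfeld computation are exactly the content of \cite[Thm.~2.9]{FM09} and \cite[Thm.~2.8]{Bo0}, so I would merely invoke them.

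For step (ii), once $h\equiv a$ has been identified as a smooth strictly stable stationary set in the sense of Definition~\ref{def:stable}, the $H^3$-exponential convergence of solutions of \eqref{flow2} starting $H^3_{\mathrm{per}}$-close to $a$ follows from Theorem~\ref{thmstability}. Here I would use the fact, explicitly noted in the opening of Section~\ref{sec:graphs}, that both the short-time existence argument of Section~\ref{sec:existence} and the asymptotic stability analysis of Section~\ref{sec:stability} transfer verbatim to the periodic-graph framework, with the only cosmetic change being the sign convention in front of $Q(E(u_t))$ dictated by the orientation of $\nu_{\Omega_h}$ (which does not affect the first/second variation computations nor the monotone quantity $\int(\partial_\sigma R_t)^2\,d\H^1$ exploited in the proof). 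The area-preservation hypothesis $|\Omega_{\bar h}|=a\ell$ is what makes $\bar h-a$ belong to $\tilde H^1_{\mathrm{per}}$, which is needed to apply the coercivity of $\partial^2 J(h_a)$.

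The main obstacle in this blueprint is entirely inside step (i), specifically the explicit mode-by-mode solution of the Lam\'e system on the infinite strip under mixed Dirichlet/Neumann/periodicity conditions and the subsequent algebraic manipulation needed to recognize the Grinfeld function $H$ (and hence $K$) in the resulting quadratic form. Everything else is bookkeeping: verifying stationarity of flat profiles, checking that the reduction $\partial_\nu Q(E(u_a))=0$ holds, and quoting Theorem~\ref{thmstability}. As long as one accepts the computation of \cite{FM09,Bo0}, the proof is a one-line combination of their characterization with the dynamical result of Section~\ref{sec:stability}.
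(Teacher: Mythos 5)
Your proposal follows essentially the same route the paper takes: the paper's ``proof'' of Theorem~\ref{th:2dliapunov} is precisely the one-sentence combination of the strict-stability characterization from \cite[Thm.~2.9]{FM09} and \cite[Thm.~2.8]{Bo0} with the asymptotic-stability result Theorem~\ref{thmstability}, using the remark earlier in Section~\ref{sec:graphs} that the analysis of Sections~\ref{sec:existence}--\ref{sec:stability} transfers to the periodic-graph setting. One small slip in your unpacking of step (i): the elastic equilibrium on the strip $\Omega_a$ is not $u_a(x_1,x_2)=(e_0x_1,0)$, since that field violates the traction-free condition $\C E(u_a)[\mathbf e_2]=0$ on $\Gamma_a$ whenever $\lambda\neq 0$; the correct solution is the affine field $u_a(x_1,x_2)=\bigl(e_0x_1,\,-\tfrac{\lambda e_0}{2\mu+\lambda}x_2\bigr)$, chosen so that the $(2,2)$ component of $\C E(u_a)$ vanishes. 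This does not affect your argument, since the conclusions you draw (constant strain, hence constant $Q(E(u_a))$ on $\Gamma_a$ and $\partial_\nu Q(E(u_a))=0$) depend only on $u_a$ being affine, which it is.
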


\section{Appendix}

Let $s\in (0,1)$ and $p\geq 1$. We recall that for a function $f: \mathbb{S}^1\to \R$ the Gagliardo seminorm $[f]_{s,p}$ is defined as
$$
[f]^p_{s,p}:=\int_{\mathbb{S}^1}\int_{\mathbb{S}^1}\frac{|f(x)-f(y)|^p}{|x-y|^{1+sp}}\, dxdy\,.
$$
If $s>0$ and $\ell$ is the integer part of $s$, the Sobolev space  $W^{s,p}(\mathbb{S}^1)$ is the space of all functions $f$ in 
$W^{\ell, p}(\mathbb{S}^1)$ such that  $[\partial^\ell f]_{s-\ell,p}$ is finite, endowed with the norm 
$\|f\|^p_{W^{s,p}(\mathbb{S}^1)}:=\|f\|_{W^{\ell, p}(\mathbb{S}^1)}^p+[\partial_\sigma^{\ell}f]_{s-\ell,p}^p$. Here we used the convention $W^{0,p}=L^p$ and $[\partial_\sigma^{\ell}f]_{s-\ell,p}^p=\|\partial_\sigma^{\ell}f\|_{L^p(\Gamma)}$. We recall also that for 
$p=2$ the  seminorm $[\partial_\sigma^{\ell}f]_{s-\ell,p}^p$ is equivalent to 
$$
\bigg(\sum_{k\in \Z}  k^{2s} a_k(f)^2  \bigg)^{\frac12}\,,
$$
where $\{a_k(f)\}$ is the sequence of the Fourier coefficients of $f$ with respect to the orthornormal basis $\{(2\pi)^{\frac12}\mathrm{e}^{-ikz}\}_{k\in \Z}$. These definitions extend in the obvious way to the case where $\mathbb{S}^1$ is replaced by any regular Jordan curve $\Gamma$.

We prove the following interpolation inequality for   curves. Note that in the statement we are using  and $W^{t,2}=H^t$ for all $t>0$.
\begin{proposition} 
\label{interpolation}
Let $\Gamma$ be a regular Jordan curve. Let $m\geq 1$ be an integer, $0\leq s< m$ and 
$p\in [2,+\infty)$ such that $s+ 1/2 -1/p<m$.  There exists a  constant $C>0$,   depending only on $m, s, p$ and on the length of $\Gamma$  such that for every $f\in H^{m}(\Gamma)$
\begin{equation}\label{inter1}
\|f\|_{W^{s,p}(\Gamma)} \leq C\big(\|\partial_\sigma^m f\|_{L^2(\Gamma)}^\theta \|f\|_{L^2(\Gamma)}^{1-\theta} + \|f\|_{L^2(\Gamma)} \big), 
\end{equation}
where
\[
\theta = \frac{s+ 1/2 -1/p}{m}.
\]
If $s$ is a positive  integer, then 
\begin{equation}\label{inter2}
|| \partial_\sigma^s f||_{L^p(\Gamma)} \leq C\,|| \partial_\sigma^m f||_{L^2(\Gamma)}^\theta ||f||_{L^2(\Gamma)}^{1-\theta}\,,
\end{equation}
with $\theta$ as before. The same inequality also holds if $s=0$, provided that $f$ has zero average. 

Finally, if $0<\alpha<\frac12$, there  exists $\theta'$, depending only on $m$ and $\alpha$, such that for every $f\in H^{m}(\Gamma)$
\begin{equation}\label{inter3}
\|f\|_{C^{m-1, \alpha}(\Gamma)}\leq C (\|\partial^m_\sigma f\|_{L^2(\Gamma)}^{\theta'}\|f\|_{L^2(\Gamma)}^{1-\theta'}+\|f\|_{L^2(\Gamma)})\,.
\end{equation}
\end{proposition}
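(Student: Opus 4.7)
The strategy is a Fourier-analytic Gagliardo--Nirenberg argument on the one-dimensional torus. I will parametrize $\Gamma$ by arclength, identifying it with $\mathbb{T}_L := \mathbb{R}/L\mathbb{Z}$, where $L = \mathcal{H}^1(\Gamma)$, so that all norms on $\Gamma$ become equivalent to their counterparts on $\mathbb{T}_L$ (the equivalence constants depending on $L$ and on the curvature of $\Gamma$). Expanding $f(\sigma) = \sum_{k \in \mathbb{Z}} a_k e^{2\pi i k\sigma/L}$, I will use the classical Fourier characterizations $\|\partial_\sigma^j f\|_{L^2}^2 \sim \sum_k k^{2j}|a_k|^2$ for integer $j \geq 0$ and $\|f\|_{H^{s'}}^2 \sim \sum_k (1+k^2)^{s'}|a_k|^2$ for real $s' \geq 0$, the latter being precisely the norm recalled in the appendix.

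The engine of the argument is the following discrete H\"older inequality: for $0 \leq s' \leq m$, writing $|k|^{2s'}|a_k|^2 = (|k|^{2m}|a_k|^2)^{s'/m}(|a_k|^2)^{1-s'/m}$ and applying H\"older with conjugate exponents $m/s'$ and $m/(m-s')$ to the sum over $k \neq 0$ yields
\[
\sum_{k \neq 0} |k|^{2s'}|a_k|^2 \leq \Bigl(\sum_{k} |k|^{2m}|a_k|^2\Bigr)^{s'/m}\Bigl(\sum_{k} |a_k|^2\Bigr)^{1-s'/m}.
\]
Controlling the zero-mode contribution by $|a_0| \leq C\|f\|_{L^2}$ then gives the basic $L^2$-based interpolation
\[
\|f\|_{H^{s'}(\Gamma)} \leq C\bigl(\|\partial_\sigma^m f\|_{L^2(\Gamma)}^{s'/m}\|f\|_{L^2(\Gamma)}^{1-s'/m} + \|f\|_{L^2(\Gamma)}\bigr), \qquad s' \in [0,m].
\]
If $s'$ is a positive integer the Fourier coefficient of $\partial_\sigma^{s'}f$ at $k=0$ automatically vanishes, and the additive $\|f\|_{L^2}$ correction can be dropped; the same holds if $s'=0$ and $f$ has zero mean.

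To pass from this $L^2$-scale estimate to the $L^p$-scale for $p \geq 2$, I will invoke the one-dimensional Sobolev embedding $H^{s+1/2-1/p}(\mathbb{T}_L) \hookrightarrow W^{s,p}(\mathbb{T}_L)$, which on the torus is a direct consequence of the Fourier characterizations (equivalently, the Besov embedding $B^{s+1/2-1/p}_{2,2} \hookrightarrow B^{s}_{p,p} = W^{s,p}$ valid because $s+1/2-1/p-1/2 \geq s-1/p$). Choosing $s' = s+1/2-1/p$, which is strictly below $m$ by hypothesis, immediately yields \eqref{inter1} with $\theta = (s+1/2-1/p)/m$; the same computation, carried out without the zero-mode correction when $s$ is a positive integer (or when $s=0$ and $f$ has zero mean), gives \eqref{inter2}. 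For \eqref{inter3} I will apply the Sobolev embedding $H^{\alpha+1/2}(\mathbb{T}_L) \hookrightarrow C^{0,\alpha}(\mathbb{T}_L)$, valid for $\alpha \in (0,1/2)$, to $\partial_\sigma^{m-1}f$, obtaining $\|f\|_{C^{m-1,\alpha}(\Gamma)} \leq C\|f\|_{H^{m-1/2+\alpha}(\Gamma)}$, and then interpolate with $s' = m-1/2+\alpha < m$, which gives $\theta' = (m-1/2+\alpha)/m$. No step is genuinely difficult; the only point requiring (standard) care is the verification that the smooth arclength parametrization of $\Gamma$ does not distort the fractional Sobolev and Gagliardo seminorms by more than a multiplicative constant depending only on $\Gamma$.
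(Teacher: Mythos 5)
Your proof follows essentially the same strategy as the paper for \eqref{inter1} and \eqref{inter2}: parametrize by arclength, pass to Fourier coefficients, apply the discrete H\"older interpolation between the $H^m$ and $L^2$ Fourier weights (this is exactly the paper's inequality \eqref{inter4}), handle the zero mode, and then invoke the one-dimensional Sobolev embedding $H^{s+1/2-1/p}\hookrightarrow W^{s,p}$ (the paper cites Grisvard for this). Your observation that the $\|f\|_{L^2}$ correction can be dropped when $s$ is a positive integer because $\partial_\sigma^s f$ has vanishing mean is also the paper's argument for \eqref{inter2}.

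For \eqref{inter3} you take a mildly different, and arguably cleaner, route. The paper reduces to $m=1$, derives the pointwise bound $|f(z)-f(w)|\le c|z-w|^{1/2}\|\partial_\sigma f\|_{L^2}$ from the fundamental theorem of calculus and Cauchy--Schwarz, interpolates the H\"older exponent between $1/2$ and $0$ against $\|f\|_{L^\infty}$, and then argues by induction on $m$. You instead apply the embedding $H^{\alpha+1/2}(\mathbb{T}_L)\hookrightarrow C^{0,\alpha}(\mathbb{T}_L)$ directly to $\partial_\sigma^{m-1}f$ and feed the resulting $H^{m-1/2+\alpha}$ norm into the $L^2$-scale interpolation already established, obtaining an explicit $\theta'=(m-1/2+\alpha)/m$ with no induction. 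Both arguments are correct; yours is more uniform in $m$ and avoids the separate pointwise estimate, while the paper's is more elementary in that it does not invoke the fractional Sobolev-to-H\"older embedding. One small caveat that applies equally to both: strictly speaking, transporting the Gagliardo seminorm from $\Gamma$ to $\mathbb{T}_L$ trades the chordal distance $|x-y|$ for the arclength distance, and the comparability constant depends on the regularity of $\Gamma$, not just its length; both you and the paper silently absorb this into the assumption that $\Gamma$ is a regular Jordan curve.
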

\begin{proof}
It is enough to prove the statement for $\Gamma=\mathbb{S}^1$. The general case will follow by parametrizing $\Gamma$ by the arclength and by rescaling.   Let $t=s+\frac12-\frac1p$.
Observe that 
\beq\label{inter4}
\bigg(\sum_{k\in \Z}^\infty k^{2t}a_k(f)^2 \bigg)^{\frac12}\leq 
\bigg(\sum_{k\in \Z} k^{2m} a_k(f)^2 \bigg)^{\frac\theta2}
\bigg(\sum_{k\in \Z} a_k(f)^2 \bigg)^{\frac{1-\theta}2}\,,
\eeq
and thus \eqref{inter1} follows with $\|f \|_{W^{s,p}(\Sb^1)}$ replaced by 
$\|f \|_{W^{t,2}(\Sb^1)}$. The general case follows recalling that 
$W^{t,2}(\Sb^1)$ is continuously embedded in $W^{s,p}(\Sb^1)$, since $t=s+\frac12-\frac1p$ 
(see \cite[Th. 1.4.4.1]{Gr}). 
Observe that it is enough to prove \eqref{inter2} for functions $f$ with zero average also when 
 $s$ is a positive integer. On the other hand if $f$ has zero average, \eqref{inter2} follows from 
 \eqref{inter4} and the aforementioned Sobolev Embedding after observing that the 
 $W^{s,p}$-norm of $f$ is equivalent to the $L^p$-norm of $\partial_\sigma^s f$.
 
 Finally, to prove \eqref{inter3} it is enough to assume $m=1$ and then to argue by induction with respect to $m$. To this aim, we observe that for every $z$, $w\in\Sb^1$
 $$
 |f(z)-f(w)|\leq c |z-w|^{\frac12}\|\partial_\sigma f\|_{L^2}\,,
 $$ 
for some universal constant $c$. Then if $0<\alpha<\frac12$
\begin{eqnarray*}
|f(z)-f(w)|\leq |f(z)-f(w)|^{2\alpha}|f(z)-f(w)|^{1-2\alpha}\leq c |z-w|^{\alpha} \|\partial_\sigma f\|_{L^2}^{2\alpha}\|f\|_{L^\infty}^{1-2\alpha}\,.
\end{eqnarray*} 
The conclusion follows by estimating $\|f\|_{L^\infty}$ by \eqref{inter1}, with $m=1$.
\end{proof}

\end{document}